\documentclass[a4paper]{amsart}

\usepackage[utf8]{inputenc}
\usepackage[T1]{fontenc}
\usepackage{lmodern, enumerate}
\usepackage{amssymb,amsxtra,amscd,amsmath}
\usepackage{amsthm}
\usepackage{amsfonts}
\usepackage{bbm}
\usepackage[all]{xy}
\usepackage{xcolor}
\usepackage{nicefrac,mathtools}
\usepackage{microtype}
\usepackage[pdftitle={Deformation via coactions},
 pdfauthor={Alcides Buss, Siegfried Echterhoff},
 pdfsubject={Mathematics}
]{hyperref}
\usepackage[lite]{amsrefs}
\usepackage{comment}
\usepackage[normalem]{ulem}

\newcommand*{\MRref}[2]{ \href{http://www.ams.org/mathscinet-getitem?mr=#1}{MR #1}}
\newcommand*{\arxiv}[1]{\href{http://www.arxiv.org/abs/#1}{arXiv: #1}}

\numberwithin{equation}{section}
\theoremstyle{plain}
\newtheorem{theorem}[equation]{Theorem}
\newtheorem{lemma}[equation]{Lemma}
\newtheorem{proposition}[equation]{Proposition}
\newtheorem{corollary}[equation]{Corollary}
\theoremstyle{definition}
\newtheorem{definition}[equation]{Definition}
\theoremstyle{remark}
\newtheorem{remark}[equation]{Remark}
\newtheorem{example}[equation]{Example}

\newtheorem{notation}[equation]{Notation}

\DeclareMathOperator{\Aut}{Aut}
\DeclareMathOperator{\cspn}{\overline{span}}
\DeclareMathOperator{\spn}{{span}}

\DeclareMathOperator{\supp}{\mathrm{supp}}
\DeclareMathOperator{\id}{\mathrm{id}}

\DeclareMathOperator{\Twist}{\mathrm{Twist}}
\DeclareMathOperator{\Br}{\mathrm{Br}}



\newcommand*{\tg}{\mathrm{tg}} 

\newcommand*{\nb}{\nobreakdash}
\newcommand*{\Star}{\(^*\)\nobreakdash-}
\newcommand*{\dd}{\,\mathrm d} 
\newcommand*{\R}{\mathbb R}
\newcommand*{\T}{\mathbb T}
\newcommand*{\Z}{\mathbb Z}

\newcommand*{\C}{\mathbb C}

\newcommand*{\Lb}{\mathbb B}
\renewcommand*{\L}{\mathcal L}
\newcommand*{\K}{\mathbb K}
\renewcommand*{\H}{\mathcal H}

\newcommand*{\cont}{C}
\newcommand*{\contz}{\cont_0}
\newcommand*{\contc}{\cont_c}
\newcommand*{\M}{\mathcal M}

\newcommand{\tig}{{\tilde{g}}}
\newcommand{\tih}{{\tilde{h}}}
\newcommand{\tir}{{\tilde{r}}}

\newcommand*{\Ad}{\textup{Ad}}

\newcommand*{\U}{\mathcal U}
\newcommand*{\E}{\mathcal E}
%
\newcommand*{\X}{\mathcal X}
\newcommand{\rt}{\mathrm{rt}}

\newcommand*{\congto}{\xrightarrow\sim}


\newcommand*{\braket}[2]{\langle#1\!\mid\!#2\rangle}

\newcommand*{\sbe}{\subseteq} 

\newcommand*{\cstar}{\texorpdfstring{$C^*$\nobreakdash-\hspace{0pt}}{*-}}
\newcommand*{\into}{\hookrightarrow}
\newcommand*{\onto}{\twoheadrightarrow}
\newcommand*{\red}{r}
\renewcommand*{\max}{\mathrm{max}}

\newcommand*{\dual}[1]{\widehat{#1}}
\newcommand*{\dualG}{\widehat{G}}

\newcommand{\om}{\omega}
\newcommand{\Om}{\Omega}

\newcommand{\lk}{\langle}
\newcommand{\rk}{\rangle}
\newcommand{\csp}{\overline{\operatorname{span}}}

%
\newcommand{\A}{\mathcal A}
\newcommand{\B}{\mathcal B}
\newcommand{\G}{\mathcal G}
\newcommand{\hatG}{\widehat{G}}
\newcommand{\hatdelta}{\widehat{\delta}}

\newcommand{\car}{\curvearrowright}


\newcommand*{\kc}[1]{\Bbbk_c({#1})}
\newcommand*{\kk}[1]{\Bbbk ({#1})}
\newcommand*{\ktwo}[1]{\Bbbk_2 ({#1})}
\newcommand*{\kb}[1]{\Bbbk_{c,b} ({#1})}

\newcommand*{\s}{\mathfrak{s}}

\begin{document}
\title[Deformation of Fell bundles]{Deformation of Fell bundles}

\author{Alcides Buss}
\email{alcides.buss@ufsc.br}
\address{Departamento de Matem\'atica\\
 Universidade Federal de Santa Catarina\\
 88.040-900 Florian\'opolis-SC\\
 Brazil}

\author{Siegfried Echterhoff}
\email{echters@uni-muenster.de}
\address{Mathematisches Institut\\
Universit\"at M\"un\-ster\\
 Einsteinstr.\ 62\\
 48149 M\"unster\\
 Germany}

\begin{abstract}
In this paper we study deformations of \cstar{}algebras that are given as cross-sectional \cstar{}algebras of Fell bundles $\A$ over locally compact groups $G$. Our deformation comes from a direct deformation of the Fell bundles $\A$ via certain parameters, like automorphisms of the Fell bundle, group cocycles,  or central group extensions of $G$ by the circle group $\T$, and then taking cross-sectional algebras of the deformed Fell bundles. We then show that this direct deformation method is equivalent to the deformation via the dual coactions by similar parameters as studied previously in \cites{BNS, BE:deformation}.
\end{abstract}

\subjclass[2010]{46L55, 22D35}

\keywords{Deformation, Borel Cocycle, Fell bundles, Cross-Sectional C*-algebras}

\thanks{This work was funded by: the Deutsche Forschungsgemeinschaft (DFG, German Research Foundation) Project-ID 427320536 SFB 1442 and under Germany's Excellence Strategy EXC 2044  390685587, Mathematics M\"{u}nster: Dynamics, Geometry, Structure; and CNPq/CAPES/Humboldt - Brazil.}

\maketitle

\begin{center}
    Dedicated to the memory of Iain Raeburn (1949-2023).
\end{center}

\tableofcontents

\section{Introduction}
Inspired by and building on a series of papers by many different authors (e.g., see \cites{Rieffel:Deformation, Rief-K, Kasprzak:Rieffel, Kasprzak1,  BNS, NT}) on deformation of \cstar{}algebras via actions and coactions of locally compact groups with deformation parameters given by $2$-cocycles of these groups, we gave in \cite{BE:deformation} a new description of such deformation by Borel cocycles of locally compact groups which works in a quite general setting. 

Let us recall the basic ideas of this deformation procedure: Suppose that 
$\delta:A\to\M(A\otimes C^*(G))$ is a coaction of the locally compact group $G$ on the \cstar{}algebra $A$. Then the coaction crossed product 
$B:=A\rtimes_{\delta}\widehat{G}$ is equipped with the dual action $\beta:=\widehat{\delta}:G\car B$ and a canonical nondegenerate $\rt-\beta$ equivariant inclusion $\phi:=j_{\contz(G)}:\contz(G)\to\M(B)$, where $\rt\colon G\car \contz(G)$ denotes the action by right translations. The triple $(B, \beta,\phi)$ then provides a set of  data which allows to reconstruct the original cosystem $(A,\delta)$ via an (exotic) version of Landstad duality. Now, if we deform the triple $(B, \beta,\phi)$ by certain parameters ${p}$ to a new Landstad triple $(B_{{p}},\beta_{{p}},\theta_{{p}})$, then applying Landstad duality to  $(B_{{p}},\beta_{{p}},\theta_{{p}})$ provides a deformed cosystem $(A^{{p}}, \delta^{{p}})$. The deformation parameters $p$ can be certain group actions $\alpha: G\car A$ by \Star{}automorphisms, or Borel $2$-cocycles $\om$ of $G$, or, related to the latter, central extensions $\sigma=(\T\into G_\sigma\onto G)$ of $G$ by the circle group $\T$. 
If we restrict ourselves to normal (or reduced) coactions, i.e., those, for which the composition 
$$\delta^\lambda:=(\id_A\otimes \lambda)\circ \delta:A\to \M(A\otimes C_r^*(G))$$
is faithful, a similar procedure has been given before by Bhowmick, Neshveyev, and Sangha in \cite{BNS}, using cocycles $\om\in Z^2(G,\T)$ as deformation parameters. In that paper they also observed that for $G$ discrete, deformation by $\om$ can also be obtained via deformation of the underlying Fell bundle as studied by Yamashita in \cite{Yamashita}.
Recall from \cites{Quigg:Discrete, Ng} that for every normal coaction $\delta:A\to A\otimes C^*(G)$ of a discrete group $G$, there exists a unique Fell bundle $p:\A\to G$ such that the given coaction identifies with the dual coaction $\delta:C_r^*(\A)\to C_r^*(\A)\otimes C^*(G)$ 
 on the reduced cross-sectional algebra $A:=C_r^*(\mathcal A)$.
Then, given any cocycle $\om \in Z^2(G,\T)$, the deformed Fell bundle 
$\mathcal A_\om$ is constructed by introducing a twisted  multiplication $*_\om$ on the original Fell bundle  $\A$ via the formula
$$a_g*_\om a_h=\om(g,h)a_g a_h,$$
for $g,h\in G$, $a_g\in A_g$, $a_h\in A_h$, where $a_ga_h$ denotes the 
product in $\A$. Then the deformed coaction $(A^\om,\delta^\om)$ can be described as the dual coaction 
$(C_r^*(\A_\om), \delta_\om)$ on the deformed Fell bundle $\A_\om$. 

If $G$ is locally compact and $\om\in Z^2(G,\T)$ is a {\em continuous} cocycle, then a similar deformation of a Fell bundle $\mathcal A$ over $G$ makes perfect sense, and it is not too difficult to see that passing to reduced cross-sectional algebras in this setting will coincide with the 
deformation by $\om$ of $A=C_r^*(\mathcal A)$ via the dual coaction $\delta: A\to \M(A\otimes C^*(G))$
in the sense of \cite{BNS} or \cite{BE:deformation}.

The main objective of this paper 
is to describe a direct deformation procedure on the level of Fell bundles for groups of automorphisms $\alpha\colon G\car \A$, following an approach to deformation given by Abadie and Exel in \cite{Abadie-Exel:Deformation}, and for twists $\sigma=(\T\into G_\sigma\onto G)$ for $G$, 
which then covers also the case of general Borel $2$-cocycles on $G$, and to compare the outcome with the deformation via  dual coactions 
as studied for  general duality crossed-product functors in \cite{BE:deformation} (like, for instance, the dual coaction on the maximal cross-sectional algebra $C_{\max}^*(\A)$ of the Fell bundle $\A$). This will provide a convenient direct deformation process in this situation. 
We then use continuity and $K$-theory results obtained in \cite{BE:deformation} in the setting of deformation via coactions
for the cross-sectional algebras of our deformed Fell bundles. In the special case of Fell bundles over  discrete amenable groups $G$, we recover a beautiful result of Raeburn in \cite{Raeburn:Deformations}, where he constructed a continuous bundle of deformed cross-sectional algebras 
$C^*(\A_\om)$ over the second Borel-comomology group $H^2(G,\T)$.

\subsection*{Acknowledgement} Both authors wish to express their deep gratitude for the profound insights they have gained from Iain Raeburn’s mathematical legacy. The second author had the pleasure of many stimulating joint projects with Iain from which he not only learned some deep mathematics, but also the desire to write the papers in a style which should be understandable for a large readership. Unfortunately, he never succeeded to reach Iain's mastership in this respect  (and in others). Iain's death is a huge loss for the operator algebras community!

Most of this work was written while the first author was visiting the University of Münster, and he is deeply grateful to the entire group -- especially the second author -- for their warm hospitality!

\section{Actions, coactions and their (exotic) crossed products}\label{sec-prel}

For terminology and notation concerning (co)actions, their (exotic) crossed products, and duality -- particularly Landstad duality for coactions in terms of generalized fixed-point algebras -- we refer the reader to our previous paper \cite{BE:deformation}.

Let us just recall some basic notation and terminology. 
Throughout the paper, $G$ will usually denote a locally compact group, with a fixed Haar measure.
Continuous actions of $G$ on a \cstar{}algebra $B$ will be usually shortly written as $\beta\colon G\car B$.

In what follows below, by an {\em (exotic) crossed product} $B\rtimes_{\beta,\mu}G$ for an action $\beta:G\car B$ we understand  
a \cstar{}completion of $C_c(G,B)$ by a norm which satisfies $\|f\|_r\leq\|f\|_\mu\leq \|f\|_{\max}$ for all $f\in C_c(G,B)$, where $\|\cdot\|_r$ (resp.~$\|\cdot\|_{\max}$) denote the reduced (resp.~ maximal) crossed-product norms on $C_c(G,B)$. A crossed product $B\rtimes_{\beta,\mu}G$ is called a {\em duality crossed product} if the dual coaction $\widehat{\beta}$ on $B\rtimes_{\beta,\max}G$ factors through a coaction $\widehat{\beta}_\mu$ on $B\rtimes_{\beta,\mu}G$.
A {\em crossed-product functor} is a functor $(B,\beta)\mapsto B\rtimes_{\beta,\mu}G$ from the category of $G$-algebras to the category of 
\cstar{}algebras that sends actions $\beta:G\car B$ to crossed products $B\rtimes_{\beta,\mu}G$ such that for any $G$\nb{}-equivariant \Star{}homomorphism $\Phi:(B,\beta)\to (B',\beta')$ the associated \Star{}homomorphism $\Phi\rtimes_\mu G:B\rtimes_{\beta,\mu}G\to B'\rtimes_{\beta',\mu}G$
extends $\Phi\rtimes_{alg}G: C_c(G,B)\to C_c(G,B'); f\mapsto \Phi\circ f$. 
If all $\rtimes_\mu$-crossed products are duality crossed products, then $\rtimes_\mu$ is called a 
{\em duality crossed-product functor}, which are the crossed-product functors we consider in this work. 
However, in \S 8 below, we need to restrict our attention to crossed-product functors that are also functorial with respect to 
($G$-equivariant) correspondences. These are called \emph{correspondence crossed-product functors} and include the maximal and reduced crossed products.
It is shown in \cite{BEW}*{Theorem~5.6} that all correspondence crossed-product functors are duality functors.

A coaction of $G$ on a \cstar{}algebra $A$  will  usually be denoted by the symbol
$\delta\colon A\to \M(A\otimes C^*(G))$ and its crossed product will be denoted by $A\rtimes_\delta\dualG$. Recall that $A\rtimes_\delta\dualG$ can be 
realized as 
$$\overline{\spn}\big((\id\otimes\lambda)\circ\delta(A)(1\otimes \M(C_0(G))\big)\subseteq \M(A\otimes \K(L^2(G)),$$
where $M:C_0(G)\to \mathbb{B}(L^2(G))$ is the representation by multiplication operators. 
We  often write 
$$j_A:=(\id\otimes\lambda)\circ \delta:A\to \M(A\rtimes_\delta\dualG)\;\text{and}\; j_{C_0(G)}:=1\otimes M:C_0(G)\to \M(A\rtimes_\delta\dualG)$$
for the canonical morphisms from $A$ and $C_0(G)$ into $\M(A\rtimes_\delta\dualG)$. The dual action $\widehat{\delta}:G\car A\rtimes_{\delta}\dualG$ 
is then determined  by the equation 
$$\widehat\delta_g\big(j_A(a)j_{C_0(G)}(f)\big)=j_A(a)j_{C_0(G)}(\rt_g(f)),$$
where $\rt:G\car C_0(G)$ denotes the action by right translations. 

It has been observed by Nilsen in \cite{Nilsen:Duality}*{Corollary~2.6} that for every coaction $\delta:A\to\M(A\otimes C^*(G))$ there exists a canonical 
surjective \Star{}homomorphism
$$\Psi_A: A\rtimes_\delta\dualG\rtimes_{\widehat\delta,\max}G\onto A\otimes \K(L^2(G))$$
given as the integrated form of the covariant representation $(j_A\rtimes j_{C_0(G)}, 1\otimes\rho)$.
The coaction  $\delta$ is called {\em maximal} if $\Psi_A$ is an isomorphism, and it is called {\em normal} if it factors through an isomorphism $ A\rtimes_\delta\dualG\rtimes_{\widehat\delta,r}G\congto A\otimes \K(L^2(G))$. In general, it factors through an isomorphism
\begin{equation}\label{eq:Katayama-mu-duality} A\rtimes_\delta\dualG\rtimes_{\widehat\delta,\mu}G\congto A\otimes \K(L^2(G))
\end{equation}
for some (possibly exotic) duality crossed product $\rtimes_\mu$. We then say that $(A,\delta)$ is a $\mu$-coaction to indicate that it satisfies Katayama duality for the $\mu$-crossed product.

The triple $(A\rtimes_\delta\dualG,\widehat{\delta}, j_{C_0(G)})$ is what we call a \emph{weak} $G\rtimes G$-algebra. More generally, a \emph{weak} $G\rtimes G$-algebra is a triple $(B, \beta, \phi)$ where:
\begin{itemize}
    \item $B$ is a \cstar-algebra,
    \item $\beta:G\curvearrowright B$ is an action of $G$ on $B$,
    \item $\phi:C_0(G)\to \M(B)$ is a nondegenerate, $\rt-\beta$-equivariant $*$-homomorphism.
\end{itemize}
As a variant of the classical Landstad duality for reduced coactions \cite{Quigg:Landstad}, it is shown in \cite{Buss-Echterhoff:Exotic_GFPA} that for any given \emph{duality crossed product} $B\rtimes_{\beta,\mu}G$, there exists a unique (up to isomorphism) $\mu$-coaction $(A_\mu, \delta_\mu)$ of $G$ such that  
\[
(A_\mu\rtimes_{\delta_\mu}\dual G, \widehat{\delta}_\mu, j_{C_0(G)})\cong (B, \beta, \phi).
\]
This provides the main tool for \emph{deformation by coactions}, as introduced in \cite{BE:deformation}, which serves as the foundation for most of the results presented in this paper. The construction of $(A_\mu, \delta_\mu)$ is carried out using the theory of \emph{generalized fixed-point algebras} and depends on the choice of the crossed product $B\rtimes_{\beta,\mu}G$. Moreover, if we start with a duality crossed-product functor $\rtimes_\mu$ on the category of $G$-\cstar{}algebras, it is shown in \cite{BE:deformation}*{Proposition~2.9} (see also \cite{Buss-Echterhoff:Exotic_GFPA}*{Lemma~7.1}) that the assignment  
\[
(B,\beta,\phi) \mapsto (A_\mu,\delta_\mu)
\]  
is functorial. More precisely, given weak $G\rtimes G$-algebras $(B,\beta,\phi)$ and $(B',\beta',\phi')$, if $\Psi:B\to B'$ is a $\beta-\beta'$-equivariant $*$-homomorphism satisfying $\Psi\circ \phi=\phi'$, then $\Psi$ induces a canonical $\delta_\mu-\delta'_\mu$-equivariant $*$-homomorphism  
\[
\psi:A_\mu\to A'_\mu
\]  
between the corresponding $\mu$-fixed-point algebras. In particular, we obtain the following result.  

\begin{proposition}\label{prop-iso-fixed}
Suppose that $(B,\beta,\phi)$ and $(B',\beta',\phi')$ are isomorphic weak $G\rtimes G$-algebras, and let $\rtimes_\mu$ be a duality crossed-product functor.  
Then the corresponding $\mu$-coactions $(A_\mu, \delta_\mu)$ and $(A'_\mu,\delta'_\mu)$ via Landstad duality are also isomorphic.
\end{proposition}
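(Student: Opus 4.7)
The plan is to obtain the proposition as an essentially formal consequence of the functoriality of the Landstad construction $(B,\beta,\phi)\mapsto (A_\mu,\delta_\mu)$ recalled just above the statement.

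First, I would spell out what an isomorphism of weak $G\rtimes G$-algebras means: it is a $\beta$--$\beta'$-equivariant \Star{}isomorphism $\Psi\colon B\to B'$ which, after canonical extension to the multiplier algebras, satisfies $\Psi\circ\phi=\phi'$. In particular $\Psi$ is a morphism of weak $G\rtimes G$-algebras in the sense of the functoriality discussion, and so is its inverse $\Psi^{-1}\colon B'\to B$, which is $\beta'$--$\beta$-equivariant and satisfies $\Psi^{-1}\circ\phi'=\phi$.

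Next, I would apply the functoriality statement cited from \cite{BE:deformation}*{Proposition~2.9} (and \cite{Buss-Echterhoff:Exotic_GFPA}*{Lemma~7.1}) to both $\Psi$ and $\Psi^{-1}$ in order to obtain canonical equivariant \Star{}homomorphisms
\[
\psi\colon A_\mu\to A'_\mu \qquad\text{and}\qquad \psi'\colon A'_\mu\to A_\mu
\]
between the Landstad $\mu$-fixed-point algebras. Because the assignment $(B,\beta,\phi)\mapsto (A_\mu,\delta_\mu)$ is a functor, composition is preserved, so $\psi'\circ\psi$ is the morphism induced by $\Psi^{-1}\circ\Psi=\Id_B$, and $\psi\circ\psi'$ is induced by $\Psi\circ\Psi^{-1}=\Id_{B'}$. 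Since the identity morphism of a weak $G\rtimes G$-algebra clearly induces the identity on its Landstad dual (this is immediate from the explicit construction of the generalized fixed-point algebra, where the induced map is built from $\Psi$ itself), we conclude $\psi'\circ\psi=\Id_{A_\mu}$ and $\psi\circ\psi'=\Id_{A'_\mu}$, so $\psi$ is a $\delta_\mu$--$\delta'_\mu$-equivariant \Star{}isomorphism.

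There is no real obstacle here; the only point that deserves a quick check is the preservation of identities by the functor, which in turn rests on the naturality in $\Psi$ of the concrete construction of $(A_\mu,\delta_\mu)$ from $(B,\beta,\phi)$ via generalized fixed-point algebras. Once that is acknowledged, the argument reduces to the standard categorical fact that any functor sends isomorphisms to isomorphisms.
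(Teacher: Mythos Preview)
Your proposal is correct and matches the paper's approach: the paper presents this proposition as an immediate consequence (``In particular, we obtain the following result'') of the functoriality of $(B,\beta,\phi)\mapsto (A_\mu,\delta_\mu)$ stated just before, without giving any further argument. You have simply made explicit the standard fact that functors preserve isomorphisms.
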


The above proposition will serve as our main tool for comparing different deformation procedures in this work.

\section{Fell bundles}

Recall from \cites{Doran-Fell:Representations,Doran-Fell:Representations_2}  that a  Fell bundle $\mathcal A$ over the locally compact group $G$  is a collection of Banach spaces $\{A_s: s\in G\}$ together  with 
a set of pairings (called multiplications) 
 $A_s\times A_t\to A_{st}: (a_s, a_t)\mapsto a_sa_t$  and involutions $A_s\to A_{s^{-1}}; a_s\mapsto a_s^*$ which are compatible with the linear structures in the usual sense known from \cstar{}algebras, including the condition 
 $\|a_sa_s^*\|=\|a_s\|^2$ for all $a_s\in A_s$. In particular, it follows that the fibre $A_e$ over the unit $e\in G$ is a \cstar{}algebra.
 If $G$ is not discrete, the topological structure of $\mathcal A$ is determined by the set $C_c(\mathcal A)$ of {\em compactly supported continuous sections $a: G\to \A$, $s\mapsto a_s\in A_s$}. 
 We refer to \cites{Doran-Fell:Representations,Doran-Fell:Representations_2,Exel:Book} for more details on this structure. The space $C_c(\A)$ 
 becomes a \Star{}algebra when equipped with convolution and involution given by
 $$ (a*b)_t=\int_G a_sb_{s^{-1}t}\,\dd s\quad\text{and}\quad a^*_t=\Delta(t^{-1})a_{t^{-1}}^*
\quad a,b\in C_c(\A).$$
A representation of $\A$ into $\M(D)$ for some \cstar{}algebra $D$ is a mapping $\pi:\A\to \M(D)$ that preserves multiplication and involution, and such that for every $d\in D$ and every  $a\in C_c(\A)$ the map $G\to D; s\mapsto \pi(a_s)d$ is continuous.
Such representation is {\em nondegenerate} if $\pi(A_e)D=D$, i.e., the restriction of $\pi$ to the unit fibre $A_e$ of $\A$ is a nondegenerate 
representation of the \cstar{}algebra $A_e$. Every representation $\pi:\A\to \M(D)$ integrates to give a \Star{}representation 
$$\pi:C_c(\A)\to \M(D);\quad \pi(a)d:=\int_G \pi(a_s)d\, \dd s \quad\forall a\in C_c(\A), d\in D$$
which is nondegenerate (in the sense that $\csp \pi(C_c(\A))D=D$)  if and only if $\pi$ is nondegenerate. 
The maximal (or universal) cross-sectional \cstar{}algebra $C^*(\A)$ is then defined as the completion of $C_c(\mathcal A)$ 
with respect to the \cstar{}norm
$$\|a\|_{\max}:=\sup_{\pi}\|\pi(a)\|$$
where $\pi$ runs through all possible (nondegenerate) representations of $\A$. Passing to the integrated form and extension to the completion $C^*(\A)$ of $C_c(\A)$ then gives a one-to-one correspondence between nondegenerate representations of $\A$ and nondegenerate \Star{}representations of $C^*(\A)$.  We refer to \cite{Buss-Echterhoff:Maximality}*{Proposition 2.1} for a list of alternative characterizations of the representations of $\A$. 

On the other extreme we have the {\em reduced cross-sectional \cstar{}algebra} $C_r^*(\A)$ which can be described as the image of $C^*(\A)$ under the 
{\em (left) regular representation} $\Lambda_\A:C^*(\A)\to \Lb_{A_e}(L^2(\A))$, with $\Lambda_\A(a)\xi=a*\xi$ for $a\in C_c(\A)\subseteq C^*(\A)$ and $\xi\in \contc(\A)\sbe L^2(\A)$, where $L^2(\A)$ denotes the Hilbert $A_e$-module obtained as a completion of $C_c(\A)$ by the $A_e$-valued inner product
$$\lk \xi, \eta\rk_{A_e}=\int_G \xi(s)^* \eta(s)\,\dd s.$$  

There exists a \emph{dual coaction} 
$$\delta_{\A}: C^*(\A)\to \M(C^*(\A)\otimes C^*(G))$$
given by the integrated form of the representation 
$s\mapsto a_s\otimes u_s$, where, as above,  $u:G\to U\M(C^*(G))$ denotes the universal representation of $G$ and each $a_s\in A_s$ acts as a multiplier of $C^*(\A)$ by $(a_s\cdot f)(t):=a_sf(s^{-1}t)$. It has been shown in \cite{Buss-Echterhoff:Maximality}*{Theorem 3.1} that $\delta_\A$ is a maximal coaction, that is, it satisfies
Katayama duality~\eqref{eq:Katayama-mu-duality} with respect to the maximal crossed product $\rtimes_{\max}$. 

Now, similar to \cite{Buss-Echterhoff:Maximality}*{Definition 4.1}, given any duality crossed-product functor $\rtimes_\mu$, there is a unique quotient $C_\mu^*(\A)$ of 
$C^*(\A)$ such that $\delta_\A$ factors through a coaction 
$$\delta_\mu:C_\mu^*(\A)\to \M(C_\mu^*(\A)\otimes C^*(G))$$
and such that $(C_\mu^*(\A),\delta_\mu)$ satisfies  Katayama duality for the  $\mu$-crossed product as in~\eqref{eq:Katayama-mu-duality}. In particular, the dual coaction $(C_r^*(\A), \delta_r)$ on the reduced cross-sectional algebra $C_r^*(\A)$ corresponds to the reduced crossed-product functor $\rtimes_r$ in this way. 
It is the {\em normalization} of $(C^*(\A), \delta_\A)$ in the sense of Quigg (e.g., see \cite{Buss-Echterhoff:Maximality}). 
We also have $(C^*_\max(\A),\delta_{\max})=(C^*(\A),\delta_\A)$. 
In particular, $C_\mu^*(\A)$ is an ``exotic'' completion of the cross-sectional \Star{}algebra $C_c(\A)$ that sits between $C^*(\A)$ and $C^*_r(\A)$ in the sense that the identity map on $C_c(\A)$ extends to surjective \Star{}homomorphisms
$$C^*(\A)\onto C^*_\mu(\A)\onto C^*_r(\A).$$
Combining this with \cite{Buss-Echterhoff:Exotic_GFPA}*{Theorem 4.3} 
we get 

\begin{proposition}\label{prop-mu-cross-sectional-algebra}
The coaction  $(C_\mu^*(\A),\delta_\mu)$  coincides with the coaction provided by applying $\mu$-Landstad duality  to the weak $G\rtimes G$-algebra
$(C^*(\A)\rtimes_{\delta_\A}\dualG, \dual{\delta_\A}, j_{C_0(G)})$.
\end{proposition}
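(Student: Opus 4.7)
The plan is to invoke the uniqueness statement in $\mu$-Landstad duality recalled in Section~\ref{sec-prel}: a $\mu$-coaction $(A_\mu,\delta_\mu)$ is determined up to canonical isomorphism by its associated weak $G\rtimes G$-algebra $(A_\mu\rtimes_{\delta_\mu}\dual G,\dual{\delta_\mu},j_{C_0(G)})$. Thus, to identify the $\mu$-coaction obtained by $\mu$-Landstad duality from $(C^*(\A)\rtimes_{\delta_\A}\dual G,\dual{\delta_\A},j_{C_0(G)})$ with $(C_\mu^*(\A),\delta_\mu)$, I need to check two things: first, that $(C_\mu^*(\A),\delta_\mu)$ is itself a $\mu$-coaction; and second, that its own dual weak $G\rtimes G$-algebra is (canonically) isomorphic to $(C^*(\A)\rtimes_{\delta_\A}\dual G,\dual{\delta_\A},j_{C_0(G)})$.

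The first point is built into the definition of $(C_\mu^*(\A),\delta_\mu)$: it was constructed as the unique quotient of $(C^*(\A),\delta_\A)$ on which $\delta_\A$ descends to a coaction satisfying Katayama duality~\eqref{eq:Katayama-mu-duality} for the $\mu$-crossed product, which is precisely the defining condition for a $\mu$-coaction. For the second point, the obvious candidate for the desired isomorphism is $q\rtimes\dual G$, where $q\colon C^*(\A)\onto C_\mu^*(\A)$ is the defining quotient map. The equivariance $(q\otimes\id_{C^*(G)})\circ\delta_\A=\delta_\mu\circ q$ ensures that $q\rtimes\dual G$ is a surjective $*$-homomorphism intertwining the dual actions $\dual{\delta_\A}$ and $\dual{\delta_\mu}$ and compatible with both copies of $j_{C_0(G)}$; in other words, it is a morphism of weak $G\rtimes G$-algebras.

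The main obstacle is therefore the injectivity of $q\rtimes\dual G$, that is, the assertion that the $\dual G$-crossed product cannot distinguish the maximal coaction $\delta_\A$ from its $\mu$-descendant $\delta_\mu$. This is precisely the content of \cite{Buss-Echterhoff:Exotic_GFPA}*{Theorem~4.3}, explicitly cited in the statement as the input. Heuristically, the reason is that all coactions lying between $\delta_\A$ and its normalization $\delta_r$ on $C_r^*(\A)$ share the same image under $(\id\otimes\lambda)\circ(-)$, and the $\dual G$-crossed product is the closed span of this image together with $1\otimes M(C_0(G))$ inside $\M(\cdot\otimes\K(L^2(G)))$. Once $q\rtimes\dual G$ is known to be an isomorphism of weak $G\rtimes G$-algebras, Proposition~\ref{prop-iso-fixed} applied to the identity on $(C^*(\A)\rtimes_{\delta_\A}\dual G,\dual{\delta_\A},j_{C_0(G)})$ closes the argument.
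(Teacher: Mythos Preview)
Your proposal is correct and follows essentially the same approach as the paper, which simply states that the result follows by ``combining'' the definition of $(C_\mu^*(\A),\delta_\mu)$ with \cite{Buss-Echterhoff:Exotic_GFPA}*{Theorem~4.3}. You have spelled out precisely what this combination means: $(C_\mu^*(\A),\delta_\mu)$ is a $\mu$-coaction by construction, the quotient map $q\rtimes\dual G$ is an isomorphism of weak $G\rtimes G$-algebras (the cited theorem supplying injectivity), and the uniqueness in $\mu$-Landstad duality then forces the identification.
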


\subsection{The C*-algebra of kernels of a Fell bundle}\label{sec:kernels-algebra}

We need to recall the realization of $C_r^*(\A)\rtimes_{\delta_r} \hatG$ (and hence also  of $C_\mu^*(\A)\rtimes_{\delta_\mu}\hatG$) as a completion
of a  certain \Star{}algebra of  kernel functions $k:G\times G\to \A$ due to Abadie (see \cite{Abadie:Enveloping}*{\S 5}).
For this let 
\begin{equation*}\label{eq-kc}
\kc{\A}:=\{k:G\times G\to \A:\mbox{$k$ is cont. with compact supports and } k(s,t)\in A_{st^{-1}}\}.
\end{equation*}
In other words, $\kc\A=C_c(\nu^*(\A))$, the space of compactly supported continuous sections of the pullback 
$\nu^*(\A)$ of $\A$ along $\nu\colon G\times G\to G,\, (s,t)\mapsto st^{-1}$. This is a \Star{}algebra with convolution and involution given by
\begin{equation}\label{eq-convinvkernel}
k*l(s,t)=\int_G k(s,r)l(r,t)\dd r\quad\mbox{and}\quad k^*(s,t)=k(t,s)^*
\end{equation}
for all $k,l\in \kc{\A}$ and $s,t\in G$.  By a continuous \Star{}representation
of $\kc{\A}$ we understand a \Star{}homomorphism $\pi: \kc{\A}\to D$ for some \cstar{}algebra $D$ such that
$$\|\pi(k)\|\leq \|k\|_2:=\left(\int_{G\times G} \|k(s,t)\|^2\,\dd(s,t)\right)^{1/2}$$
for all $k\in \kc{\A}$. Let $\|k\|_u:=\sup_{\pi}\|\pi(k)\|$, where $\pi$ runs through all continuous \Star{}representations
of $\kc{\A}$. Then the completion $\kk{\A}$ of $\kc{\A}$ by this norm is a \cstar{}algebra; this was introduced by Abadie in \cite{Abadie:Enveloping}. As noticed there, $\kk\A$ can also be viewed as the enveloping \cstar{}algebra of the Banach \Star{}algebra obtained as the completion of $\kc\A$ with respect to $\|\cdot\|_2$. We write $\ktwo{\A}$ for this $L^2$-completion and note that it coincides with the $L^2$-completion of the space $\kb{\A}$ of all bounded compactly supported measurable functions $k\colon G\times G\to \A$, which satisfy the condition $k(s,t)\in A_{st^{-1}}$. 
The following result is due to Abadie (\cite{Abadie:Enveloping}):

\begin{proposition}\label{prop-kernels}
For every duality crossed-product functor $\rtimes_\mu$, there
 is a canonical isomorphism $C_\mu^*(\A)\rtimes_{\delta_\mu}\hatG\cong \kk{\A}$  which sends
a typical element of the form $j_{C^*_\mu(\A)}(a) j_{\contz(G)}(f)$ with $a\in \contc(\A)$ and $f\in \contc(G)$ to the kernel function
\begin{equation}\label{eq-kernel}
k_{a,f}(s,t) = a(st^{-1})f(t)\Delta(t^{-1}).
\end{equation}
Viewing $\kc{\A}$ as a dense subalgebra of $C_\mu^*(\A)\rtimes_{\delta_\mu}\hatG$ using this isomorphism,
the dual action $\hatdelta_\mu$ of $G$  is given  on $\kc{\A}$ by the formula
\begin{equation}\label{eq-hatdelta}
\hatdelta_{\mu,r}(k)(s,t)=\Delta(r)k(sr,tr).
\end{equation}
Moreover, we have
\begin{equation}\label{eq-jGk}
(j_{\contz(G)}(f)k)(s,t)=f(s)k(s,t)\quad\text{and}\quad (kj_{\contz(G)}(f))(s,t)=k(s,t)f(t)
\end{equation}
for all $k\in \kc{\A}$ and $f\in C_0(G)$.
\end{proposition}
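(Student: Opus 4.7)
The plan is to first reduce to the maximal case $\mu=\max$, then construct an explicit $*$-algebra map between the appropriate dense subalgebras, and finally identify the two universal norms.

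\textbf{Reduction to $\mu=\max$.} By Proposition~\ref{prop-mu-cross-sectional-algebra}, for every duality crossed-product functor $\rtimes_\mu$ the weak $G\rtimes G$-algebra $(C_\mu^*(\A)\rtimes_{\delta_\mu}\dualG,\widehat{\delta}_\mu,j_{C_0(G)})$ is canonically isomorphic to $(C^*(\A)\rtimes_{\delta_\A}\dualG,\widehat{\delta}_\A,j_{C_0(G)})$ via Landstad duality. It therefore suffices to build the desired isomorphism and verify~\eqref{eq-hatdelta} and~\eqref{eq-jGk} in the maximal case, after which the dual action and the $C_0(G)$-structure transport automatically to the $\mu$-setting.

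\textbf{The $*$-algebra map.} I would define $\Phi$ on generators by $\Phi(j_{C^*(\A)}(a)j_{C_0(G)}(f)):=k_{a,f}$ as in~\eqref{eq-kernel}, for $a\in\contc(\A)$ and $f\in\contc(G)$. Unfolding the realization $j_{C^*(\A)}(a)=\int a_s\otimes\lambda_s\,\dd s$ together with $j_{C_0(G)}(f)=1\otimes M_f$ inside $\M(C^*(\A)\otimes\K(L^2(G)))$, and substituting $r=s^{-1}t$, shows that $j(a)j(f)$ acts as the integral operator with kernel $k_{a,f}$. Multiplicativity of $\Phi$ in the sense of~\eqref{eq-convinvkernel} then follows from the covariance identity $\lambda_s M_f=M_{\lt_s f}\lambda_s$ combined with the Fell bundle multiplication $a_s b_t=(ab)_{st}$ and a change of variables; preservation of involution is similar but easier. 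A standard partition-of-unity argument, exploiting the local trivializability of $\A$, gives inductive-limit density of finite sums of kernels $k_{a,f}$ in $\kc{\A}$.

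\textbf{Norm identification --- the main obstacle.} The key step is to match the universal norm on $\kc{\A}$ defined via $L^2$-bounded continuous $*$-representations with the maximal crossed-product norm pulled back through $\Phi$. This is the core of Abadie's theorem~\cite{Abadie:Enveloping}*{\S 5}. My plan is to establish a bijection between $L^2$-bounded continuous $*$-representations of $\kc{\A}$ on a Hilbert space $\Hilb$ and covariant pairs $(\rho,M)$ consisting of nondegenerate representations of $C^*(\A)$ and $C_0(G)$ on $\Hilb$ satisfying the coaction covariance. Any such covariant pair integrates to a representation of the crossed product whose restriction to $\kc{\A}$ through $\Phi$ is automatically $L^2$-bounded, via $\|j(a)j(f)\|\leq\|a\|_{C^*(\A)}\|f\|_\infty$ and Cauchy--Schwarz. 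Conversely, a GNS-type disintegration using the $C_0(G)$-module structure on $\kc{\A}$ reflected in~\eqref{eq-jGk} recovers $(\rho,M)$ from a given $L^2$-bounded $\pi$. The two universal norms therefore coincide, giving $C^*(\A)\rtimes_{\delta_\A}\dualG\cong\kk{\A}$. Finally,~\eqref{eq-hatdelta} follows from the direct computation $\widehat{\delta}_\A(j(a)j(f))=j(a)j(\rt_r f)$, which on the kernel side reads $a(st^{-1})f(tr)\Delta(t^{-1})=\Delta(r)k_{a,f}(sr,tr)$, while~\eqref{eq-jGk} is immediate from the fact that $j_{C_0(G)}(f)$ multiplies kernels pointwise in the left, respectively right, variable.
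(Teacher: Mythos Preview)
Your reduction step is correct and mirrors the paper's strategy: the coaction crossed product $C_\mu^*(\A)\rtimes_{\delta_\mu}\dualG$ is independent of $\mu$, so one only needs the isomorphism for a single completion. However, the paper reduces to the \emph{reduced} case $\mu=r$ and simply invokes Abadie's theorem \cite{Abadie:Enveloping}*{Proposition~8.1}, which is stated and proved precisely for normal (reduced) coactions, and then appeals to \cite{BE:deformation}*{Theorem~2.4} for the passage to general $\mu$. You instead reduce to the maximal case and attempt to reprove the isomorphism from scratch, which is a genuinely different and more ambitious route.

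Your sketch has the right architecture, but the norm identification --- which you correctly flag as the main obstacle --- is under-argued in both directions. For the direction ``covariant pair $\Rightarrow$ $L^2$-bounded representation'', the estimate $\|j(a)j(f)\|\leq\|a\|_{C^*(\A)}\|f\|_\infty$ does not by itself yield $\|\pi(k)\|\leq\|k\|_2$; you need an honest Hilbert--Schmidt-type argument on the kernel (e.g.\ writing $k$ as an integral operator and applying Cauchy--Schwarz in the integral, using that the fibre maps $\pi|_{A_s}$ are contractive). For the converse direction, ``GNS-type disintegration'' is too vague: the real work is showing that an arbitrary $L^2$-bounded $*$-representation of $\kc{\A}$ extends to nondegenerate representations of $C^*(\A)$ and $\contz(G)$ separately, satisfying the covariance relation of Proposition~\ref{prop:covariance-Fell-bundle-dual-coaction}. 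This step --- extracting the Fell-bundle representation from the kernel representation --- is exactly the substance of Abadie's argument and does not follow from generalities about $C_0(G)$-module structure. If you want a self-contained proof along your lines, you would need to fill in both of these steps; otherwise, the paper's route of citing Abadie for the reduced case is more economical.
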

\begin{proof}
Abadie only considers \emph{reduced coactions}, that is, injective coactions of the reduced group \cstar{}algebra $C^*_\red(G)$.
It is well known that such coactions correspond bijectively (in a naturally functorial way) to normal coactions of $C^*(G)$ in our sense (e.g., see \cite{EKQR}*{Appendix A.9}).
Moreover, this bijective correspondence preserves crossed products and their representation theory.
That said, what Abadie proves in \cite{Abadie:Enveloping} is that there is an isomorphism $C^*_\red(\A)\rtimes_{\delta_r}\dualG\cong\kk{\A}$ which is given as in the statement (see the proof of Proposition~8.1 in \cite{Abadie:Enveloping}). But this implies the general version as in the statement for every exotic \cstar{}norm associated to a duality crossed-product functor $\rtimes_\mu$ by \cite{BE:deformation}*{Theorem~2.4}.
\end{proof}

The following result describes the representations of $\kk\A$.

\begin{proposition}\label{prop:covariance-Fell-bundle-dual-coaction}
    Let $D$ be a \cstar{}algebra. A pair $(\pi,\kappa)\colon (C^*(\A),\contz(G))\to \M(D)$ of nondegenerate representations is covariant for a dual coaction $\delta_{\A}$ of $G$ on the cross-sectional \cstar{}algebra of a Fell bundle $\A$, and hence extends to a nondegenerate representation $\pi\rtimes\kappa\colon C^*(\A)\rtimes_{\delta_\A}\dualG\cong\kk\A\to\M(D)$ if and only if
    \begin{equation}\label{eq:covariance-Fell-bundle}
        \pi(a)\kappa(_sf)=\kappa(f)\pi(a)\quad\forall a\in A_s, f\in \contz(G),
    \end{equation}
    where $_s f(t):=f(st)$ denotes left translation, and we use the same notation $\pi\colon \A\to \M(D)$ for the disintegrated form of $\pi\colon C^*(\A)\to\M(D)$.
\end{proposition}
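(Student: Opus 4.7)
My plan is to derive both implications from the explicit realization given by Proposition~\ref{prop-kernels}, combined with the formulas $j_{C^*(\A)}=(\id\otimes\lambda)\circ\delta_\A$ and $j_{\contz(G)}=1\otimes M$ inside $\M(C^*(\A)\otimes\K(L^2(G)))$, together with the identity $\delta_\A(a_s)=a_s\otimes u_s$ for $a_s\in A_s$ (yielding $j_{C^*(\A)}(a_s)=a_s\otimes\lambda_s$). Both directions will pivot on the commutation
\[
j_{\contz(G)}(f)\,j_{C^*(\A)}(a_s) = j_{C^*(\A)}(a_s)\,j_{\contz(G)}({}_sf)
\]
in $\M(\kk\A)\cong \M(C^*(\A)\rtimes_{\delta_\A}\dualG)$, valid for all $a_s\in A_s$ and $f\in \contz(G)$.

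For the ``only if'' direction, suppose $(\pi,\kappa)$ extends to a nondegenerate representation $\pi\rtimes\kappa$. The commutation above is an immediate consequence of the standard identity $M_f\lambda_s=\lambda_s M_{{}_sf}$, verified directly from $({}_sf)(t)=f(st)$ via $\lambda_s^* M_f\lambda_s\xi(t)=f(st)\xi(t)=M_{{}_sf}\xi(t)$. Applying $\pi\rtimes\kappa$ to the commutation then yields~\eqref{eq:covariance-Fell-bundle}.

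For the converse, assume the covariance. I would build $\pi\rtimes\kappa$ on $\kk\A$ by first defining it on the dense $*$-subalgebra of $\kc\A$ spanned by the kernels $k_{a,f}$ with $a\in\contc(\A)$ and $f\in\contc(G)$ via
\[
(\pi\rtimes\kappa)(k_{a,f}):=\pi(a)\kappa(f),
\]
and then extending by continuity. Well-definedness, multiplicativity and $*$-preservation on this subalgebra reduce to direct kernel computations using~\eqref{eq-convinvkernel} and~\eqref{eq-kernel}; the covariance is precisely what is needed to move $\kappa(f)$ past $\pi(b)$, upon disintegrating $\pi(b)=\int_G \pi(b(r))\,\dd r$, so that $\pi(a)\kappa(f)\pi(b)\kappa(g)$ matches the integrated form of the convolution $k_{a,f}*k_{b,g}$.

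The main obstacle is controlling the universal norm $\|\cdot\|_u$ on $\kc\A$ so that the construction descends to $\kk\A$. This is best handled either by spatially realizing $\pi\rtimes\kappa$ on an induced Hilbert $D$-module built from $\pi$ and $\kappa$ in the spirit of the regular representation of~\cite{Abadie:Enveloping}, or by recognizing that a pair $(\pi,\kappa)$ satisfying the covariance is precisely a covariant representation of the coaction $(C^*(\A),\delta_\A)$ in the standard sense and invoking the universal property of the coaction crossed product as reviewed in~\cite{BE:deformation}*{\S 2}, which then provides the desired $\pi\rtimes\kappa\colon C^*(\A)\rtimes_{\delta_\A}\dualG\cong\kk\A\to\M(D)$.
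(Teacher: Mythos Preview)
Your ``only if'' direction is fine and arguably more direct than the paper's: rather than unpacking the abstract covariance identity $(\pi\otimes\id)(\delta_\A(a))=w_\kappa(\pi(a)\otimes 1)w_\kappa^*$, you use the commutation $j_{\contz(G)}(f)\,j_{C^*(\A)}(a_s)=j_{C^*(\A)}(a_s)\,j_{\contz(G)}({}_sf)$ inside $\M(C^*(\A)\rtimes_{\delta_\A}\dualG)$ (a consequence of $M_f\lambda_s=\lambda_s M_{{}_sf}$) and apply the integrated form $\pi\rtimes\kappa$. Since covariance is equivalent to existence of $\pi\rtimes\kappa$ by the universal property of the coaction crossed product, this gives~\eqref{eq:covariance-Fell-bundle}.

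The gap is in your converse. Your primary construction on kernels $k_{a,f}\mapsto \pi(a)\kappa(f)$ has the norm problem you already flag, and neither of your proposed remedies actually closes it. Your option~2 --- ``recognizing that a pair $(\pi,\kappa)$ satisfying the covariance is precisely a covariant representation of the coaction in the standard sense'' --- is not a remedy at all: it \emph{is} the converse you are trying to prove. The universal property of $C^*(\A)\rtimes_{\delta_\A}\dualG$ only gives you $\pi\rtimes\kappa$ once you have verified the standard covariance identity $(\pi\otimes\id)(\delta_\A(a))=w_\kappa(\pi(a)\otimes 1)w_\kappa^*$, and deducing this from~\eqref{eq:covariance-Fell-bundle} is precisely the missing step. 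The paper handles this by a slicing argument: one checks that for $f\in A(G)\subseteq C^*(G)'$ one has $\kappa(f)=(\id\otimes f)(w_\kappa)$ and $(\id\otimes f)((1\otimes u_s)w_\kappa)=\kappa({}_sf)$, so that~\eqref{eq:covariance-Fell-bundle} for $f\in A(G)$ is equivalent to $(\pi(a_s)\otimes u_s)w_\kappa=w_\kappa(\pi(a_s)\otimes 1)$ after slicing, and density of $A(G)$ in $\contz(G)$ finishes the argument. You need either this slicing computation or a genuine norm estimate for your kernel construction; as it stands, the converse is not established.
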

\begin{proof}
    By definition, $(\pi,\kappa)$ is covariant if and only if
    \begin{equation}\label{eq:covariance-coaction}
        (\pi\otimes\id)(\delta_\A(a))=w_\kappa(\pi(a)\otimes 1)w_\kappa^*,
    \end{equation}
    for all $a\in C^*(\A)$, where $w_\kappa:=(\kappa\otimes \id)(w_G)$ and $w_G\in \M(\contz(G)\otimes C^*(G))$ is given by the universal representation $s\mapsto u_s$. Since $\pi$ is nondegenerate, it extends to the multiplier algebra $\M(C^*(\A))$. Using the inclusion $A_s\into \M(C^*(\A))$ given by  $(a_s\cdot \xi)(t)=a_s\xi(s^{-1}t)$, for $a_s\in A_s$, $\xi\in C_c(\A)$,  
    and  the formula $\delta_\A(a_s)=a_s\otimes u_s$, the covariance conditon for $(\pi,\kappa)$ is equivalent to
    $$(\pi(a_s)\otimes u_s)w_\kappa=w_\kappa(\pi(a_s)\otimes 1)\quad \forall a_s\in A_s,\, s\in G.$$
    Now we remark that if we view the Fourier algebra $A(G)\subseteq C_0(G)$ as a subalgebra of $B(G)\cong C^*(G)'$, the 
    set of continuous linear functionals on $C^*(G)$, then
    $\kappa(f)=(\id\otimes f)(w_\kappa)$ for all $f\in A(G)$.
    Taking slices with $f\in A(G)$ we see that the covariance condition is equivalent to
    $$\pi(a_s)(\id\otimes f)((1\otimes u_s)w_\kappa)=\kappa(f)\pi(a_s)\quad\forall a_s\in A_s, s\in G, f\in A(G).$$
    But 
    \begin{align*}
    (\id\otimes f)((1\otimes u_s)w_\kappa)&=(\kappa\otimes f)((1\otimes u_s)w_G)\\
    &=(\kappa\otimes \id)(\id\otimes f)((1\otimes u_s)w_G)=\kappa(f\cdot u_s),    
    \end{align*}
    where $(f\cdot u_s)(x):=f(u_sx)$ for all $x\in C^*(G)$. Viewed as a function on $G$ this gives $f\cdot u_s={_s}f$.
    Therefore the covariance condition is equivalent to
    $$\pi(a_s)\kappa(_s f)=\kappa(f)\pi(a_s)\quad\forall a_s\in A_s, s\in G, f\in A(G).$$
    Since $A(G)$ is dense in $\contz(G)$, this is equivalent to~\eqref{eq:covariance-Fell-bundle}.
\end{proof}

It was shown by Abadie \cite{Abadie:Enveloping} that there exists a well-defined \Star{}representation  $\Lambda_{\kk{\A}}:\kk{\A}\to  \Lb_{A_e}(L^2(\A))$,
called the {\em regular representation} of $\kk\A$, given by
$$ \big(\Lambda_{\kk{\A}}(k)\xi\big)(t)=\int_G k(s,t)\xi(t)\,\dd t, \quad\text{for $k\in \kc{\A},\xi\in C_c(\A)$}.$$
It is stated in \cite{Abadie:Enveloping}*{Theorem~5.1(2)} that $\Lambda_{\kk{\A}}$ is always faithful. But,
unfortunately, this is {\em not true} in general (although it is true in several important cases, e.g., if $\A$ is saturated).\footnote{One can find more information about this in the arxiv version of the paper \cite{Abadie:Enveloping} available at \url{https://arxiv.org/pdf/math/0007109.pdf}, where the wrong statements have been fixed.} As a counterexample let $\A$ be the Fell bundle over $\Z_2$ with fibres $\A_0=\C$ and $\A_1=0$. Then $\kk\A=\C\oplus\C$, while $L^2(\A)=\C$. 

We now provide a different representation of $\kk\A$ that will be faithful in general.
For this we shall use the fact that $\kk\A$ is isomorphic to the crossed product $C^*(\A)\rtimes_{\delta_\A}\dualG$.
Let
$$\delta_\A^\lambda:=(\Lambda_\A\otimes\lambda)\circ\delta_{\A}\colon C^*(\A)\to \M(C_r^*(\A)\otimes C^*_r(G))$$
denote the reduction of the dual coaction $\delta_\A$ of $G$ on $C^*(\A)$ (which factors through a genuine reduced coaction of $C_r^*(G)$ on $C_r^*(\A)$). 
The algebra $\M(C_r^*(\A)\otimes C^*_r(G))$ is clearly represented faithfully on $L^2(\A)\otimes L^2(G)$ via $\Lambda_\A\otimes \lambda$.
Now $L^2(\A)\otimes L^2(G)$ can be identified with  $L^2(\A\times G)$ if  $\A\times G$ denotes the Fell bundle over 
$G\times G$ given by the pullback of $\A$ via the projection  $G\times G\to G;(g,h)\mapsto g$, so in what follows we will regard $\Lambda_\A\otimes \lambda$  as a representation of $\M(C_r^*(\A)\otimes C_r^*(G))$ into $\Lb_{A_e}(L^2(\A\times G))$.
Then a short computation on the fibres $A_s\subseteq \A$ shows (see \cite{ExelNg:ApproximationProperty}*{Lemma 2.9 and Proposition 2.10}) that
$$\delta_\A^\lambda(a)=W_\A(\Lambda(a)\otimes 1)W_\A^*\quad\mbox{for all }a\in C^*(\A),$$
where $W_\A\in \Lb_{A_e}(L^2(\A\times G))$ is the unitary operator defined by
$$W_\A\zeta(s,t):=\zeta(s,s^{-1}t).$$
Now recall that the crossed product $C^*(\A)\rtimes_{\delta_\A}\dualG\cong C_r^*(\A)\rtimes_{\delta_r}\dualG$ can be realized as
$$C^*(\A)\rtimes_{\delta_\A}\dualG=\cspn\{\delta_\A^\lambda(a)(1\otimes M_f): a\in C_c(\A), f\in \contc(G)\}\sbe\Lb_{A_e}(L^2(\A\times G)).$$
A simple computation shows that
$$W_\A^*(1\otimes M_f)W_\A=\tilde M_{f},\quad \mbox{where }\tilde M_f\zeta(s,t):=f(st)\zeta(s,t),$$
for all $f\in \contz(G)$ and $\zeta\in \contc(\A\times G)\sbe L^2(\A\times G)$. It follows that
$$\delta_\A^\lambda(a)(1\otimes M_f)=W_\A(\Lambda(a)\otimes 1)\tilde M_f W_\A^*.$$
Hence, conjugation by the unitary $W_\A$ yields an isomorphism
$$C^*(\A)\rtimes_{\delta_\A}\dualG\cong \cspn\{(\Lambda(a)\otimes 1)\tilde M_f: a\in \contc(\A), f\in \contz(G)\}\sbe \Lb_{A_e}(L^2(\A\times G)).$$
On the other hand, the operators $(\Lambda(a)\otimes 1)\tilde M_f$ can be computed as
$$\big((\Lambda(a)\otimes 1)\tilde M_f\zeta\big)(s,t)=\int_G a(r)\big(\tilde M_f\zeta\big)(r^{-1}s,t)\dd r
=\int_G a(r)f(r^{-1}st)\zeta(r^{-1}s,t)\dd r$$
$$=\int_G a(sr^{-1})f(rt)\Delta(r)^{-1}\zeta(r,t)\dd r=\int_G \beta_t(k_{a,f})(s,r)\zeta(r,t)\dd r,$$
where $k_{a,f}(s,r):=a(sr^{-1})f(r)\Delta(r^{-1})$ as in~\eqref{eq-kernel}, and $\beta$ denotes the dual action of $G$ on $\kk{\A}$, as in Proposition~\ref{prop-kernels}. Using the isomorphism $\kk{\A}\cong C^*(\A)\rtimes_{\delta_\A}\dualG$ provided by this proposition, we obtain 

\begin{proposition}\label{prop:formula-T-rep}
Given a Fell bundle $\A$ over $G$, there exists a faithful representation
$$T\colon \kk{\A}\to \Lb_{A_e}(L^2(\A\times G))$$
given on $\kc\A$ (resp. $\kb\A$) by the formula
$$T_k\zeta(s,t):=\int_G \beta_t(k)(s,r)\zeta(r,t)\dd r=\int_G k(st,rt)\Delta(t)\zeta(r,t)\dd r.$$
\end{proposition}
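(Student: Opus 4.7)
The plan is to assemble $T$ from the ingredients developed in the paragraphs preceding the proposition: the Abadie isomorphism $\kk{\A}\cong C^*(\A)\rtimes_{\delta_\A}\dualG$ from Proposition~\ref{prop-kernels}, the canonical realisation of this crossed product on $L^2(\A\times G)$, and the unitary $W_\A$. Faithfulness will come for free from the fact that crossed products by a maximal coaction and by its normalisation are canonically isomorphic, so nothing is lost when passing to the reduced picture on which the representation $\Lambda_\A\otimes\lambda$ acts faithfully.

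More precisely, first I would compose the Abadie isomorphism with the canonical isomorphism $C^*(\A)\rtimes_{\delta_\A}\dualG\cong C_r^*(\A)\rtimes_{\delta_r}\dualG$ (which holds since normalisation does not change the crossed product of a coaction; see, e.g., \cite{EKQR}*{Appendix~A}). The latter crossed product is tautologically represented faithfully as
$$\csp\{\delta_\A^\lambda(a)(1\otimes M_f) : a\in C_c(\A),\, f\in C_c(G)\}\subseteq \Lb_{A_e}(L^2(\A\times G)),$$
since $\Lambda_\A\otimes\lambda$ is faithful on $\M(C_r^*(\A)\otimes C_r^*(G))$.

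Next, I would conjugate this faithful realisation by $W_\A$, using the identities $\delta_\A^\lambda(a)=W_\A(\Lambda_\A(a)\otimes 1)W_\A^*$ and $W_\A^*(1\otimes M_f)W_\A=\tilde M_f$ recalled in the text, to obtain an equivalent faithful representation whose image is the closed span of the operators $(\Lambda_\A(a)\otimes 1)\tilde M_f$. Define $T$ to be this composition; the short calculation displayed just before the proposition evaluates $(\Lambda_\A(a)\otimes 1)\tilde M_f$ on $\zeta\in C_c(\A\times G)$ and, after the substitution $r\mapsto sr^{-1}$, produces exactly $\int_G k_{a,f}(st,rt)\Delta(t)\zeta(r,t)\dd r$, which by the formula~\eqref{eq-hatdelta} for $\hatdelta_\mu$ equals $\int_G \beta_t(k_{a,f})(s,r)\zeta(r,t)\dd r$. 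Since the kernels $k_{a,f}$ with $a\in C_c(\A)$ and $f\in C_c(G)$ span a dense subalgebra of $\kc\A$ inside $\kk{\A}$, the formula extends by linearity and continuity to all of $\kc\A$, and then to $\kb\A$ via the $L^2$-density noted in the discussion of $\ktwo{\A}$.

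The only conceptual step requiring care is the first one, namely ensuring that the map $C^*(\A)\rtimes_{\delta_\A}\dualG\to C_r^*(\A)\rtimes_{\delta_r}\dualG$ is an isomorphism and not merely a surjection; this is standard coaction theory, and everything else is a bounded-operator computation. The final formula for $T_k$ on arbitrary $k\in\kb\A$ then follows because both sides are well-defined, depend linearly and $\|\cdot\|_2$-continuously on $k$, and agree on the dense subspace spanned by the $k_{a,f}$.
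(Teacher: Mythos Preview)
Your proposal is correct and follows essentially the same route as the paper: the text preceding the proposition \emph{is} the proof, and you have reconstructed its steps faithfully---identifying $\kk\A$ with the dual-coaction crossed product, using that maximalisation/normalisation does not change the crossed product to pass to the reduced picture, conjugating the canonical representation on $L^2(\A\times G)$ by $W_\A$, and then reading off the explicit kernel formula from the displayed computation of $(\Lambda_\A(a)\otimes 1)\tilde M_f$. The density argument you give for passing from the special kernels $k_{a,f}$ to all of $\kc\A$ and $\kb\A$ is the natural way to finish, and matches the paper's implicit use of Proposition~\ref{prop-kernels}.
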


\begin{corollary}\label{cor:structural-hom-ext}
The structural homomorphism $\phi\colon \contz(G)\to \M(\kk\A)$ extends to a $G$-equivariant faithful unital \Star{}homomorphism
$$\bar\phi\colon L^\infty(G)\to \M(\kk\A)$$
given by the formulas 
$(\bar\phi(f)k)(s,t)=f(s)k(s,t)$, $(k\bar\phi(f))(s,t)=k(s,t)f(t)$
for $f\in L^\infty(G), k\in \kb\A$.
\end{corollary}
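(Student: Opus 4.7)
The plan is to realize the candidate extension $\bar\phi$ as a family of operators on $L^2(\A\times G)$ via the faithful representation $T$ of Proposition~\ref{prop:formula-T-rep}, and then transport them back to $\M(\kk\A)$ as double centralizers. Concretely, for each $f\in L^\infty(G)$ I would define
$$\bar\phi(f)\zeta(s,t):=f(st)\zeta(s,t),\qquad \zeta\in L^2(\A\times G).$$
Since this is fibrewise scalar multiplication by $f\circ m$, where $m\colon G\times G\to G$ is group multiplication, $\bar\phi(f)$ is adjointable with $\|\bar\phi(f)\|\le \|f\|_\infty$, and $f\mapsto \bar\phi(f)$ is a unital $*$-homomorphism $L^\infty(G)\to \Lb_{A_e}(L^2(\A\times G))$. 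Faithfulness follows because $m$ is open and Haar measure is translation invariant, so $f\circ m=0$ a.e.\ on $G\times G$ iff $f=0$ a.e.\ on $G$.

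The key step is to show that $\bar\phi(f)$ is a multiplier of $T(\kk\A)$. For $k\in\kc\A$, a direct manipulation of the integral formula for $T$ yields
$$\bar\phi(f)T_k\zeta(s,t)=\Delta(t)\int_G f(st)k(st,rt)\zeta(r,t)\dd r=\Delta(t)\int_G h(st,rt)\zeta(r,t)\dd r,$$
where $h(u,v):=f(u)k(u,v)$; and symmetrically $T_k\bar\phi(f)=T_{h'}$ with $h'(u,v):=k(u,v)f(v)$. Both $h$ and $h'$ are bounded, compactly supported, measurable sections of $\nu^*(\A)$, hence lie in $\kb\A$; the observation preceding Proposition~\ref{prop-kernels} that $\ktwo\A$ is the $L^2$-completion of $\kb\A$ then embeds $h$ and $h'$ into $\kk\A$, so $\bar\phi(f)T_k$ and $T_k\bar\phi(f)$ belong to $T(\kk\A)$ for every $k\in\kc\A$. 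Using the isometry of $T$ we obtain $\|h\|_{\kk\A}\le \|f\|_\infty\|k\|_{\kk\A}$ and similarly for $h'$, so the assignments $L_f(k):=h$ and $R_f(k):=h'$ extend by density to bounded operators on $\kk\A$; the double-centralizer identities $L_f(kk')=L_f(k)k'$, $R_f(kk')=kR_f(k')$ and $kL_f(k')=R_f(k)k'$ follow by pulling back the obvious operator relations through the faithful homomorphism $T$. Hence $\bar\phi(f)\in\M(\kk\A)$. For $f\in\contz(G)$ the formulas match~\eqref{eq-jGk}, so $\bar\phi$ genuinely extends $\phi$.

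Finally, $G$-equivariance is a one-line consequence of~\eqref{eq-hatdelta}:
$$\hatdelta_r(\bar\phi(f)k)(s,t)=\Delta(r)f(sr)k(sr,tr)=\rt_r(f)(s)\,\hatdelta_r(k)(s,t)=\bigl(\bar\phi(\rt_r f)\,\hatdelta_r(k)\bigr)(s,t),$$
and the analogous computation works on the right.

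The main obstacle is the second step, specifically recognising that the pointwise product $h(u,v)=f(u)k(u,v)$, only bounded and measurable when $f\in L^\infty(G)$, represents an actual element of $\kk\A$ rather than merely an operator on $L^2(\A\times G)$. The resolution is the slightly enlarged dense subalgebra $\kb\A$ highlighted just before Proposition~\ref{prop-kernels}; without it one would be forced into a more delicate weak-$*$ approximation of $L^\infty(G)$ by $\contz(G)$ followed by a strict-convergence argument in $\M(\kk\A)$.
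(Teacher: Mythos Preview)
Your proposal is correct and follows essentially the same approach as the paper: both use the faithful representation $T$ of Proposition~\ref{prop:formula-T-rep} to realize $\bar\phi(f)$ as a multiplication operator on $L^2(\A\times G)$ and then verify that it multiplies $T(\kk\A)\cong\kk\A$. Your write-up is more detailed---you spell out faithfulness via openness of multiplication, the double-centralizer identities, and the $G$-equivariance computation---but the underlying idea is identical.
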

\begin{proof} Consider the $*$-representation $\Phi:L^\infty(G)\to \Lb_{A_e}(L^2(\A\times G))$ 
given  by $\big(\Phi(f)\xi\big)(s,t)=f(s)\xi(s,t)$ for 
$f\in L^\infty(G)$ and $\xi\in L^2(\A\times G)$. We then compute
$$T_{\bar\phi(f)k}=\Phi(f)T_k\quad\text{and}\quad T_{k\bar\phi(f)}=T_k\Phi(f)$$
for all $f\in L^\infty(G)$ and $k\in \kb\A$. This gives the result.
\end{proof}

\section{Deformation}

In \cite{BE:deformation} we  described a general process to deform \cstar{}algebras via coactions as follows: if $\delta\colon A\to \M(A\otimes C^*(G))$ is a coaction of the locally compact group $G$ on the \cstar{}algebra $A$, we consider the associated weak $G\rtimes G$-algebra 
$$(B,\beta,\phi)=(A\rtimes_\delta\dualG,\dual\delta, j_{C_0(G)}).$$
We assume that $(A,\delta)$ satisfies Katayama duality for a given duality crossed-product functor $\rtimes_\mu$ for actions of $G$, as in~\eqref{eq:Katayama-mu-duality}. Given a suitable deformation parameter -- such as a (Borel) $2$-cocycle on $G$ or an action of $G$ on $A$ that commutes with $\delta$ -- we deform the weak $G\rtimes G$-algebra $(B,\beta,\phi)$ into another weak $G\rtimes G$-algebra $(B',\beta',\phi')$ accordingly.
Applying Landstad duality to $(B',\beta',\phi')$, as developed in \cite{Buss-Echterhoff:Exotic_GFPA}, with respect to the crossed-product functor $\rtimes_\mu$, we obtain a deformed coaction $(A',\delta')$ such that
\[
(B',\beta', \phi') = (A'\rtimes_{\delta'}\dualG, \dual{\delta'}, j'_{C_0(G)}),
\]
where $(A',\delta')$ also satisfies Katayama duality with respect to $\rtimes_\mu$.

\subsection{Abadie-Exel deformation}\label{subsec-EA}

Following ideas of Abadie and Exel from \cite{Abadie-Exel:Deformation}, we introduced in \cite{BE:deformation}*{Section~3.2} a deformation process for a given coaction $\delta\colon A\to \M(A\otimes C^*(G))$ that uses a (strongly) continuous action $\alpha:G\car A$ commuting with $\delta$
in the sense that
$$\delta(\alpha_s(a))=(\alpha_s\otimes \id_G)(\delta(a))\quad\mbox{ for all }a\in A\mbox{ and }s\in G.$$
Given such action $\alpha$, the equation
 \begin{equation}\label{eq:definition-tilde-alpha}
\tilde\alpha_s\big(j_A(a)j_{\contz(G)}(f)\big):=j_A(\alpha_s(a))j_{\contz(G)}(f)\quad a\in A, f\in C_0(G)
\end{equation}
 determines an action $\tilde\alpha:G\car B=A\rtimes_{\delta}\hatG$
 that commutes with the dual action $\dual\delta$ (meaning $\tilde\alpha_s\circ\dual\delta_t=\dual\delta_t\circ\tilde\alpha_s$ for all $s,t\in G$) and fixes $\phi=j_{C_0(G)}$. Starting then from the weak $G\rtimes G$-algebra $(B,\beta,\phi)=(A\rtimes_\delta\dualG, \dual\delta, j_{C_0(G)})$, we 
 obtain a new weak $G\rtimes G$-algebra $(B, \tilde\alpha\cdot\beta,\phi)$ with $(\tilde\alpha\cdot\beta)_s:=\tilde\alpha_s\circ \beta_s$ for all $s\in G$. 
Conversely,  \cite{BE:deformation}*{Lemma 3.3} shows that every action $\eta:G\car B$ that commutes with $\beta=\dual\delta$ and fixes $\phi$ is equal to $\tilde\alpha$ for some action $\alpha:G\car A$ commuting with $\delta$.

\begin{definition}\label{def-AE-deformation}
Let $(A,\delta)$ be a $\mu$-coaction with respect to a duality crossed-product functor $\rtimes_\mu$ and let 
$\alpha:G\car A$ be an action which commutes with $\delta$ as above. Let 
$(B,\beta, \phi)=(A\rtimes_\delta\widehat{G}, \widehat{\delta}, j_{\contz(G)})$ and let
 $(A^\alpha, \delta^\alpha)$ be the 
 coaction obtained from the weak $G\rtimes G$-algebra $(B, \tilde\alpha\cdot\beta,\phi)$
 via Landstad duality with respect to $\rtimes_\mu$.
Then we call $(A^\alpha, \delta^\alpha)$ the 
{\em Abadie-Exel deformation} of $(A,\delta)$ with respect to $\alpha$.

\end{definition}

We call this {\em Abadie-Exel deformation} because it covers the deformation of
cross-sectional algebras of Fell bundles as studied by Abadie and Exel in \cite{Abadie-Exel:Deformation}, as we are  going to show in the next section.

\subsection{Abadie-Exel deformation via Fell bundles}\label{AE-Fell}

Let $\A=(A_s)_{s\in G}$ be a Fell bundle over the locally compact group $G$. 
Abadie and Exel considered in \cite{Abadie-Exel:Deformation} continuous actions $\alpha:G\car \A$ of $G$ by automorphisms of the Fell bundle $\A$.
This means that for each $s,t\in G$ we get a Banach-space isomorphism
$$\alpha_s^t:A_t\to A_t; a_t\mapsto \alpha^t_s(a_t)$$
such that
\begin{enumerate}
\item $\alpha^t_s(a_t)\alpha^r_s(a_r)=\alpha^{tr}_s(a_ta_r)$ and $\alpha^{t^{-1}}_s(a_t^*)=\alpha_s^t(a_t)^*$ for all $a_t\in A_t, a_r\in A_r$;
\item for each fixed section $a\in \contc(\A)$ the section $\alpha_s(a)$ defined by
$$\big(\alpha_s(a)\big)_t:=\alpha_s^t(a_t)$$ is continuous, and
\item  the function $s\mapsto \alpha_s(a)$ is continuous
with respect to $\|\cdot\|_\max$, the universal norm on $\contc(\A)$ (for this it suffices that the function $s\mapsto\alpha_s(a)$
is continuous in the $L^1$-norm on $\contc(\A)$).
\end{enumerate}
These properties imply that $\alpha$ induces  an action (also called $\alpha$) of $G$ on $C^*(\A)$ which  commutes with 
$\delta_\A$ since
$\delta_\A\circ \alpha_s$ is the integrated form of the representation $a_t\mapsto \alpha^t_s(a_t)\otimes u_t$ which  coincides with 
the representation $(\alpha_s\otimes \id_G)\circ \delta_\A$. 

Let now $\rtimes_\mu$ be a duality crossed-product functor for $G$.
Since 
$$(C_\mu^*(\A)\rtimes_{\delta_\mu}\widehat{G}, \dual{\delta_\mu}, j_{\contz(G)})=(C^*(\A)\rtimes_{\delta_\A}\widehat{G}, \widehat{\delta_\A}, j_{\contz(G)})=:(B,\beta,\phi)$$
it follows from \cite{BE:deformation}*{Lemma~3.3} (using both directions of that lemma) 
that the action $\alpha$ factors through an action (still called $\alpha$) 
on $C_\mu^*(\A)$ which commutes with $\delta_\mu$. 
 We then get the deformed cosystem  $(A^\alpha, \delta^\alpha)$
 as in Definition \ref{def-AE-deformation} starting from 
 $(A,\delta):=(C_\mu^*(\A), \delta_\mu)$.

On the other hand, using the action $\alpha$ on $\A$ we can now define a new multiplication and involution
on the Fell bundle $\A$  by
$$a_s*_\alpha a_t=a_s\alpha_s(a_t)\quad\mbox{and}\quad a_s^{*_\alpha}=\alpha_{s^{-1}}(a_s^*),$$
where from now on we shall simply write $\alpha_s(a_t)$ instead of $\alpha_s^t(a_t)$.
We  write $\A_\alpha$ for the new Fell bundle obtained in this way. 
We write
$\delta^\alpha_\mu$ for the dual coaction on $C_\mu^*(\A_\alpha)$. Similar to our notations for the  Fell bundle $\A$, we write
$(C^*(\A_\alpha), \delta_{\A_\alpha})$ for the maximal coaction $(C_{\max}^*(\A_\alpha), \delta^\alpha_{\max})$. 

\begin{theorem}\label{thm-AE-Fell}
There is a canonical isomorphism  between the cosystems 
$(C_\mu^*(\A_\alpha),\delta_\mu^\alpha)$ and  $(A^\alpha,\delta^\alpha)$ as constructed above  from the action $\alpha:G\car \A$.
\end{theorem}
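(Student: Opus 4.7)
The plan is to invoke Proposition~\ref{prop-iso-fixed}: since both $(A^\alpha,\delta^\alpha)$ and $(C_\mu^*(\A_\alpha),\delta_\mu^\alpha)$ arise by $\mu$-Landstad duality from the weak $G\rtimes G$-algebras
\[
(\kk\A,\; \tilde\alpha\cdot\widehat{\delta_\mu},\; j_{\contz(G)})\quad\text{and}\quad (\kk{\A_\alpha},\; \widehat{\delta_\mu^\alpha},\; j_{\contz(G)})
\]
respectively -- after using Proposition~\ref{prop-kernels} to identify each crossed product with its kernel algebra -- it suffices to exhibit an isomorphism of weak $G\rtimes G$-algebras between these two triples. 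Notice that the first triple has the original kernel-algebra structure but a twisted $G$-action, while the second has the twisted algebra structure but the standard $G$-action, so the desired isomorphism must absorb the twist into the algebra structure.

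I propose the candidate
\[
\Phi\colon \kc\A \to \kc{\A_\alpha},\qquad \Phi(k)(s,t):=\alpha_s(k(s,t)),
\]
where $\alpha_s$ is applied fiberwise. This makes sense because each $\alpha_s^{st^{-1}}\colon A_{st^{-1}}\to A_{st^{-1}}$ is an isometric Banach-space isomorphism, so $(s,t)\mapsto\alpha_s(k(s,t))$ stays continuous with compact support. Using the group-action identity $\alpha_{sr^{-1}}\circ\alpha_r=\alpha_s$ one computes
\[
(\Phi(k)*_\alpha \Phi(l))(s,t)=\int_G \alpha_s(k(s,r))\,\alpha_{sr^{-1}}(\alpha_r(l(r,t)))\,\dd r=\alpha_s((k*l)(s,t))=\Phi(k*l)(s,t),
\]
and, using also that each $\alpha_g$ is $*$-preserving on fibers together with $\alpha_{st^{-1}}\alpha_t=\alpha_s$,
\[
\Phi(k)^{*_\alpha}(s,t)=\alpha_{st^{-1}}(\alpha_t(k(t,s))^*)=\alpha_{st^{-1}}(\alpha_t(k(t,s)^*))=\alpha_s(k^*(s,t))=\Phi(k^*)(s,t).
\]
Hence $\Phi$ is a $*$-isomorphism of the underlying $*$-algebras, with inverse $\Phi^{-1}(l)(s,t)=\alpha_{s^{-1}}(l(s,t))$. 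Since fiberwise isometry yields $\|\Phi(k)\|_2=\|k\|_2$, the map $\Phi$ preserves the class of continuous $*$-representations and therefore extends to a $*$-isomorphism $\Phi\colon \kk\A\congto\kk{\A_\alpha}$.

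It remains to check that $\Phi$ respects the remaining weak $G\rtimes G$-algebra data. The $\phi$-compatibility $\Phi\circ\phi(f)=\phi(f)\circ\Phi$ (and its right-hand version) is immediate from~\eqref{eq-jGk} and the $\C$-linearity of $\alpha_s$ on each fiber. For the actions, reading off $\tilde\alpha_r(k)(s,t)=\alpha_r(k(s,t))$ from~\eqref{eq:definition-tilde-alpha} evaluated on the generators~\eqref{eq-kernel} and combining with~\eqref{eq-hatdelta}, I get
\[
\Phi((\tilde\alpha\cdot\widehat{\delta_\mu})_r(k))(s,t)=\alpha_s(\Delta(r)\alpha_r(k(sr,tr)))=\Delta(r)\alpha_{sr}(k(sr,tr))=\widehat{\delta_\mu^\alpha}_r(\Phi(k))(s,t).
\]
Proposition~\ref{prop-iso-fixed} then yields the desired isomorphism $(A^\alpha,\delta^\alpha)\cong(C_\mu^*(\A_\alpha),\delta_\mu^\alpha)$. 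The main obstacle is really just identifying the correct shape of $\Phi$: the requirement that $\Phi$ intertwine the two actions forces a constraint of the form $F_{s,t}\circ\alpha_r=F_{sr,tr}$ on the fiberwise maps $F_{s,t}$, which essentially leaves only $F_{s,t}=\alpha_s$ or $\alpha_t$, and among these only the choice $F_{s,t}=\alpha_s$ makes the multiplicativity computation telescope through $\alpha_{sr^{-1}}\alpha_r=\alpha_s$.
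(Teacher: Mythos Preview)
Your proof is correct and follows essentially the same approach as the paper's: reduce via Proposition~\ref{prop-iso-fixed} to an isomorphism of weak $G\rtimes G$-algebras, realize both sides as kernel algebras via Proposition~\ref{prop-kernels}, and write down an explicit $\|\cdot\|_2$-isometric \Star{}isomorphism on $\kc{-}$ that intertwines the actions and the $\contz(G)$-structure maps. The only cosmetic difference is that the paper defines the map in the opposite direction, $\Phi_\alpha\colon\kc{\A_\alpha}\to\kc{\A}$, $\Phi_\alpha(k)(s,t)=\alpha_{s^{-1}}(k(s,t))$, which is precisely your $\Phi^{-1}$; the verifications are the same computations read backwards.
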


\begin{remark}\label{rem-AE}
In \cite{Abadie-Exel:Deformation} Abadie and Exel define the deformed Fell bundles $\A_\alpha$ 
as above for arbitrary locally compact groups, but only discuss their cross-sectional \cstar{}algebras for discrete abelian groups. 
They then define the deformation of $(A,\delta)$ via  $\alpha:G\car\A$ as the cosystem $(C^*(\A_\alpha), \delta_{\A_\alpha})$.
The above construction extends this to arbitrary locally compact groups and  other possible completions of $C_c(\A)$ and $C_c(\A_\alpha)$, respectively.
\end{remark}

\begin{proof}[Proof of Theorem \ref{thm-AE-Fell}]
Recall that we constructed 
$(A^\alpha, \delta^\alpha)$ via Landstad duality for the weak $G\rtimes G$-algebra 
$(B, \tilde\alpha\cdot\beta, \phi)$ with respect to $\rtimes_\mu$, with 
$(B,\beta,\phi):=(C^*(\A)\rtimes_{\delta_\A}\widehat{G}, \widehat{\delta_\A}, j_{\contz(G)})$. 
On the other hand, by Proposition \ref{prop-mu-cross-sectional-algebra}, the cosystem $(C_\mu^*(\A_\alpha),\delta_\mu^\alpha)$ can be constructed similarly via the
weak $G\rtimes G$ algebra 
$$(B^\alpha, \beta^\alpha, \phi^\alpha):=(C^*(\A_\alpha)\rtimes_{\delta_{\A_\alpha}}\dualG,\dual{\delta_{\A_\alpha}}, j^\alpha_{C_0(G)}).$$
So the result will follow immediately from Proposition \ref{prop-iso-fixed} if we can show that  
\begin{equation}\label{iso-to-show}
(C^*(\A)\rtimes_{\delta_\A}\widehat{G}, \tilde\alpha\cdot \dual{\delta_\A}, j_{\contz(G)})\cong (C^*(\A_\alpha)\rtimes_{\delta_{\A_\alpha}}\dualG,\dual{\delta_{\A_\alpha}}, j^\alpha_{C_0(G)})
\end{equation}
as weak $G\rtimes G$-algebras.
Using Proposition \ref{prop-kernels}, for this it suffices to show that there is an isomorphism
between $\kc{\A}$ and $\kc{\A_\alpha}$ which is isometric for $\|\cdot\|_2$ and
 which intertwines the relevant actions of $G$ and $C_0(G)$, respectively.
For this we define
$$\Phi_\alpha:\kc{\A_\alpha}\to \kc{\A},\quad \Phi_\alpha(k)(s,t)=\alpha_{s^{-1}}(k(s,t))$$
for all $k\in \kc{\A_\alpha}$. This is clearly a linear bijection which is isometric for the norm $\|\cdot\|_2$.
But it is also a \Star{}homomorphism: for $k,l\in \kc{\A_\alpha}$ we compute
\begin{align*}
\Phi_\alpha(k*l)(s,t)&=\alpha_{s^{-1}}(k*l(s,t))=\alpha_{s^{-1}}\left(\int_G k(s,r)\alpha_{sr^{-1}}(l(r,t))\dd r\right)\\
&=\int_G\alpha_{s^{-1}}(k(s,r))\alpha_{r^{-1}}(l(r,t))\dd r\\
&=\Phi_\alpha(k)*\Phi_\alpha(l)(s,t),
\end{align*}
for all $s,t\in G$. We also have
\begin{align*}
\Phi_\alpha(k^*)(s,t)&=\alpha_{s^{-1}}(k^*(s,t))=\alpha_{s^{-1}}(k(t,s)^{*_\alpha})\\
&=\alpha_{t^{-1}}(k(t,s))^*=\Phi_\alpha(k(t,s))^*=\Phi_\alpha(k)^*(s,t).
\end{align*}
This proves the existence of the isomorphism $\Phi_\alpha$ as stated in the theorem.
It is trivial to check that $\Phi_\alpha$ intertwines the  $C_0(G)$-action $j_{\contz(G)}^\alpha$ with $j_{\contz(G)}$  and
that it is is $\widehat{\delta_{\A_\alpha}}-\tilde\alpha \cdot \hatdelta_\A$ equivariant. Hence it preserves the
weak $G\rtimes G$-structures on both algebras. Since Landstad duality with respect to $\rtimes_\mu$ is functorial by
\cite{Buss-Echterhoff:Exotic_GFPA}, this finishes the proof.
\end{proof}

\begin{example}
    Let $A$ be a \cstar{}algebra and consider the trivial Fell bundle $\A=A\times G$ over $G$ with fibres 
    $A_t=A\times\{t\}\cong A$ for all $t\in G$ and with multiplication and involution  given by 
    $(a,s)(b,t)=(ab,st)$ and $(a,s)^*=(a^*, s^{-1})$.
    Any $G$-action $\alpha\colon G\car A$ extends to a $G$-action on $\A$ by $\alpha^t_s(a, t):=(\alpha_s(a),t)$. The induced $G$-action on $C^*(\A)\cong A\otimes_\max C^*(G)$ is just $\alpha\otimes \id$, and the dual $G$-coaction on $C^*(\A)$ corresponds to $\id\otimes\delta_G$. The $\alpha$-deformed Fell bundle $\A_\alpha$ is easily seen to be isomorphic to the semi-direct product Fell bundle $\A_\alpha=A\times_\alpha G$ associated to the action $\alpha\colon G\car A$. From our general Theorem~\ref{thm-AE-Fell} we get that the $\alpha$-deformation of the coaction $(C^*(\A),\delta_\A)\cong (A\otimes_\max C^*(G),\id\otimes\delta_G)$ is isomorphic to $(C^*(\A_\alpha),\delta_{\A_\alpha})\cong (A\rtimes_{\alpha,\max}G,\dual\alpha)$. Similarly, taking reduced norms, we get that $(C^*_r(\A),\delta_{\A}^r)\cong (A\otimes C^*_r(G),\id\otimes\delta_G^r)$ is deformed into $(A\rtimes_{\alpha,r}G,\dual\alpha_r)$, and a similar result holds for exotic crossed products (where, in general, $A\rtimes_{\id,\mu}G$ has no obvious description as `standard' tensor product).

    More generally, we can start with an arbitrary action $\beta\colon G\car A$ on a fixed \cstar{}algebra $A$, and then any other $G$-action $\alpha\colon G\car A$ commuting with $\beta$ gives an action on the Fell bundle $\A_\beta=A\times_\beta G$ with $\alpha$-deformed Fell bundle $(\A_\beta)_\alpha=\A_{\beta\cdot\alpha}=A\times_{\beta\cdot\alpha}G$, where $\beta\cdot\alpha\colon G\car A$ is defined by $(\beta\cdot\alpha)_t:=\beta_t\circ\alpha_t=\alpha_t\circ\beta_t$. In this situation, given a duality crossed-product functor $\rtimes_\mu$ for $G$, we deform the dual $G$-coaction on $C^*_\mu(\A_\beta)=A\rtimes_{\beta,\mu}G$ into the dual $G$-coaction on $C^*_\mu(\A_{\beta\cdot\alpha})=A\rtimes_{\beta\cdot\alpha,\mu}G$.    
\end{example}

\begin{example}
       As a particular example, consider the trivial Fell bundle $\cont(\T)\times \Z$ and the action 
       $\alpha_\theta$ of $\Z$ on $\cont(\T)$ by rotations for a fixed angle $\theta\in \R$. 
       In this case we deform $\cont(\T)\otimes C^*(\Z)\cong \cont(\T^2)$ with the dual $\T$-action (that is, a $\Z$-coaction) 
    into the rotation algebra $\cont(\T^2_\theta):=A\rtimes_{\alpha_\theta}\Z$ with its dual $\T$-action. 
    It is interesting to notice that $\T$-actions (i.e. $\Z$-coactions) cannot be deformed using $2$-cocycles (i.e. twists) as in the next section because the group $\Z$ carries no nontrivial $2$-cocycles. But we do get $\cont(\T^2_\theta)$ as a cocycle deformation of $\cont(\T^2)\cong C^*(\Z^2)$ considering it endowed with its canonical (dual) $\T^2$-action and deforming it with respect to the $2$-cocycle on $\Z^2$ given by $\omega_\theta((n,m),(k,l))=e^{2\pi i\theta mk}$. 
\end{example}

We see from the above family of examples that deformation of Fell bundles in the sense of Abadie-Exel is quite general. In particular, we should not expect as many permanence results to hold in this setting, as we have for deformation by $2$-cocycles as studied in  \cite{BE:deformation} or in Section \ref{sec-Boreltwist} below.

 Of course, one can also combine Abadie-Exel deformations with cocycle deformations to obtain
 more examples. The numerous examples as discussed in the paper 
 \cite{Abadie-Exel:Deformation} by Abadie and Exel show that this theory has many interesting applications.
 
\section{Deformation by twists}\label{sec-Boreltwist}
Here we recall the deformation of coactions via twists, following the approach introduced in \cite{BE:deformation}. Let $G$ be a locally compact group. We first recall that a \emph{twist} over $G$ is a central extension
\begin{equation}\label{eq:twist-def}
    \sigma=(\T\stackrel{\iota}{\into} G_\sigma\stackrel{q}{\onto} G)
\end{equation}
of $G$ by the circle group $\T$. In what follows, we often omit the embedding $\iota$ in our notations by simply identifying $\T$ as a (central) normal subgroup of $G_\sigma$ and  $q$ with the quotient map $G_\sigma\onto G_\sigma/\T\cong G$.

Given a Borel section $\s\colon G\to G_\sigma$ for $q$ which satisfies $\s(e)=1$, we obtain a (normalized) Borel cocycle $\om=\om_\sigma\in Z^2(G,\T)$ by $\omega(g,h):=\s(g)\s(h)\s(gh)^{-1}$. The cohomology class $[\om_\sigma]\in H^2(G,\T)$ does not depend on the choice of the Borel section $\mathfrak{s}$
and we obtain an isomorphism $[\sigma]\leftrightarrow  [\om_\sigma]$ between the group $\Twist(G)$ of isomorphism classes $[\sigma]$ 
of twists, equipped with the Baer multiplication of extensions, and the group $H^2(G,\T)$. We refer to  \cite{BE:deformation}*{Section 4} for a detailed discussion.
There it is also shown that $\Twist(G)\cong H^2(G,\T)$ are  isomorphic to the Brauer group $\Br(G)$ of Morita equivalence classes of actions $\alpha:G\car \K(\H)$ on algebras of compact operators on some Hilbert space $\H$. It sends a class $[\om]\in H^2(G,\T)$ to the 
Morita equivalence class $[\alpha]$ of the action $\alpha=\Ad\rho^{\bar\om}:G\car \K(L^2(G))$, 
where $\rho^{\bar\om}:G\car \U(L^2(G))$ is the {$\bar\om$-}twisted right regular 
representation of $G$ given by the formula $\big(\rho^{\bar\om}_s\xi\big)(t)=\Delta(s)^{\frac12}\bar\om(t,s)\xi(ts)$, and $\bar\om$ is the complex conjugate (i.e., inverse) of $\om$. In this paper we shall mostly use the picture of twists given by central extensions~\eqref{eq:twist-def}, and sometimes also use $2$-cocycles.

Given a twist $G_\sigma$ over $G$, we  can consider the Green-twisted action of $(G_\sigma ,\T)$ on $\C$  given  by 
the pair  $(\id_\C, \iota^\sigma)$ in which $\iota^\sigma:\T\to \T=U(\C)$ denotes the identity map. Then,
if $\beta:G\car B$ is any action of $G$, we can {\em twist} $\beta$ with $(\id_{\C},\iota^\sigma)$ by taking the
diagonal twisted action 
\begin{equation}\label{eq-twistedbeta}
(\beta,\iota^\sigma):=\beta\otimes (\id_{\C},\iota^\sigma):(G_\sigma, \T)\car B\otimes \C\cong B,
\end{equation}
where we identify $\beta$ with its inflation $\beta\circ q\colon G_\sigma\car B$.
Note that  in this setting we have $\iota^\sigma_z b=zb$ for all $z\in \T, b\in B$.

We now fix a duality crossed-product functor $\rtimes_\mu$  for $G$ and a $\mu$-coaction  $(A,\delta)$.
Consider, as before, the corresponding weak $G\rtimes G$-algebra 
$$(B,\beta,\phi)=(A\rtimes_{\delta}\dualG, \dual{\delta}, j_{C_0(G)}).$$
We want to use this twisted action to construct a deformed weak $G\rtimes G$-algebra 
$(B_\sigma,\beta_\sigma, \phi_\sigma)$ as follows:
we first observe that the 
twist $\sigma=(\T\into G_\sigma\stackrel q\onto G)$ determines a
complex line bundle $\L_\sigma$ over $G$ given by the quotient space
\begin{equation}\label{eq-Lom}
\L_\sigma=G_\sigma\times_\T \C:=\big(G_\sigma\times \C\big)/\T,
\end{equation}
with respect to the action
$\T\car G_\sigma\times \C; \;z\cdot\big(x, u\big)=\big(\bar zx, zu\big)$.
The $C_0$-sections of this bundle then naturally identify with the space of functions
\begin{equation}\label{eq-line-bundle-sections}
C_0(G_\sigma, \iota):=\{f\in C_0(G_\sigma): f(zx)=\bar{z} f(x)\mbox{ for all } x\in G_\sigma, z\in \T\}.
\end{equation}

We observe in \cite{BE:deformation}*{Remark~5.6} that $C_0(G_\sigma, \iota)$ becomes a $C_0(G)-C_0(G)$  imprimitivity Hilbert bimodule 
and the right translation action $\rt^\sigma:G_\sigma \car C_0(G_\sigma, \iota)$ implements a (Morita) equivalence between the right translation action $\rt: G\car C_0(G)$ on the left and 
 the twisted right translation action $(\rt,\iota^\sigma):(G_\sigma,\T)\car \contz(G)$  on the right. 
We then observe in \cite{BE:deformation}*{Remark~5.11} that for the weak $G\rtimes G$-algebra $(B,\beta, \phi)$, the internal tensor product
$$\L(G_\sigma,B):= C_0(G_\sigma,\iota)\otimes_{C_0(G)}B$$
is a full Hilbert $B$-module and the diagonal action $\rt^\sigma\otimes\beta$  of $G_\sigma$ on
$\L(G_\sigma,B)$ is compatible with the  twisted action $(\beta,\iota^\sigma):(G_\sigma,\T)\car B$ as introduced in~\eqref{eq-twistedbeta} above.
The adjoint action $\beta_\sigma:=\Ad(\rt^\sigma\otimes\beta)$ then factors through an ordinary action of $G$ on $B_\sigma:=\K(\L(G_\sigma,B))$ and $(\L(G_\sigma,B), \rt^\sigma\otimes\beta)$  becomes 
a $\beta_\sigma-(\beta, \iota^\sigma)$  equivariant Morita equivalence.

Together with the left action of $\phi_\sigma: C_0(G)\to\Lb(\L(G_\sigma,B))\cong \M(B_\sigma)$, which is induced from the left action of 
$C_0(G)$ on $C_0(G_\sigma,\iota)$, the triple $(B_\sigma, \beta_\sigma,  \phi_\sigma)$ 
becomes a  weak $G\rtimes G$-algebra. The following is \cite{BE:deformation}*{Definition 5.8}.

 \begin{definition}\label{def-cosystem}
Let $(A,\delta)$ be a $\mu$-coaction for some duality crossed-product functor $\rtimes_\mu$ and let $(B_\sigma,\beta_\sigma, \phi_\sigma)$ be the weak 
  $G\rtimes G$-algebra constructed from the twist $\sigma=(\T\into G_\sigma\onto G)$ and from $(B,\beta, \phi):=(A\rtimes_\delta \widehat{G}, \widehat{\delta}, j_{\contz(G)})$ 
  as above. 
 We then  define the {\em $\sigma$-deformation} of $(A,\delta)$ as  the cosystem 
 $(A^\sigma_\mu, \delta^\sigma_\mu)$  associated to $(B_\sigma,\beta_\sigma,\phi_\sigma)$ and $\rtimes_\mu$ via Landstad duality with respect to $\rtimes_\mu$.
  \end{definition} 

In other words, $(A^\sigma_\mu,\delta^\sigma_\mu)$ is the unique $\mu$-coaction for which there exists an isomorphism of weak $G\rtimes G$-algebras
$$(A^\sigma_\mu\rtimes_{\delta^\sigma_\mu}\dualG,\dual{\delta^\sigma_\mu},\phi_{A^\sigma_\mu})\cong (B_\sigma,\beta_\sigma,\phi_\sigma).$$

\begin{remark}
    If we start with an arbitrary coaction $(A,\delta)$ and any possibly unrelated duality crossed-product functor $\rtimes_\mu$, 
    it follows from $\mu$-Landstad duality applied to $(B,\beta,\phi)=(A\rtimes_\delta\dualG,\dual\delta,j_{C_0(G)})$  that there is a unique (up to isomorphism)
    $\mu$-coaction $(A_\mu,\delta_\mu)$ such that $(B,\beta,\phi)\cong (A_\mu\rtimes_{\delta_\mu}\dualG,\dual{\delta_\mu},j_{C_0(G)})$.
    The  $\sigma$-deformed coaction $(A^\sigma_\mu,\delta^\sigma_\mu)$ associated to $(B_\sigma,\beta_\sigma,\phi_\sigma)$ is then the  $\sigma$-deformation 
    of  $(A_\mu,\delta_\mu)$.
\end{remark}

If the twist $\sigma=(\T\into G_\sigma\stackrel{q}\onto G)$ splits via a continuous section $\s\colon G\to G_\sigma$, then $\sigma$ is represented in $H^2(G,\T)$ by the class of
the continuous $2$-cocycle $\omega(g,h)=\partial \mathfrak{s}(g,h)=\s(g)\s(h)\s(gh)^{-1}$.  In this case the deformed weak $G\rtimes G$-algebra $(B_\sigma,\beta_\sigma,\phi_\sigma)$ is isomorphic to
the triple $(B, \beta^\om,\phi)$ in which  $\beta^\omega:G\car B$ is given by the formula
\begin{equation}\label{eq:twisted-action-omega}
\beta^\omega:G\to \Aut(B); \quad\beta^\omega(s)=\Ad U_\om(s)\circ \beta(s),
\end{equation}
where $U_\om:G\to U\M(B)$ is defined by $U_\om(s)= \phi(u_\om(s))$, with $u_\om(s)\in C_b(G,\T)=U\M(C_0(G))$ given by
\begin{equation}\label{eq-uomega}
u_\om(s)(r)=\overline{\om(r,s)}.
\end{equation}
We refer to  \cite{BE:deformation}*{Secton 3.3} for further details.

Indeed, for a general twist $\sigma=(\T\into G_\sigma\stackrel{q}\to G)$, by \cite{FG}*{Theorem~1}, we can always choose a {\em Borel} section $\s\colon G\to G_\sigma$ for $q$ with corresponding Borel cocycle $\om=\partial \mathfrak{s}$ and
it may happen that the action~\eqref{eq:twisted-action-omega} makes sense even if $\omega$ is not continuous. This is the case, for example, if the structural homomorphism $\phi\colon \contz(G)\to \M(B)$ extends to a $G$-equivariant unital homomorphism $\bar\phi\colon L^\infty(G)\to \M(B)$; this is the content of \cite{BE:deformation}*{Proposition~5.13}. 
We are going to apply this in the next section to the kernel algebras $\kk{\A}$ using Corollary~\ref{cor:structural-hom-ext} in order to deform dual coactions on cross-sectional \cstar{}algebras of Fell bundles $\A$ over $G$.

\section{Deformation of Fell bundles by cocycles}\label{subsec-Fellcocycle}

Let $\rtimes_\mu$ be a duality crossed-product functor for a locally compact group $G$. 
Given a Fell bundle $\A$ over $G$, we consider its \cstar{}cross-sectional algebra $C_\mu^*(\A)$, which comes equipped with the dual $\mu$-coaction $\delta:C_\mu^*(\A)\to \M(C_\mu^*(\A)\otimes C^*(G))$ 
given by the integrated form of the map  $a_s\mapsto a_s\otimes u_s$ for $a_s\in A_s$.

In this section we want to show that the deformation of $C_\mu^*(\A)$ via this coaction and a twist $\sigma=(\T\into G_\sigma\onto G)$ can be described directly on the level of the Fell bundle $\A$ itself -- similarly to what happened in  Section~\ref{subsec-EA} for the case of Abadie-Exel deformation. This result will generalize \cite{BE:deformation}*{Proposition~5.16}, which in turn generalizes \cite{BNS}*{Proposition 4.3}, where a similar result has been shown for  dual coactions of reduced crossed products, and 
also results from \cite{Yamashita} for the case of reduced cross-sectional \cstar{}algebras of Fell bundles over discrete groups.

So in what follows let $\A=(A_g)_{g\in G}$ be a Fell bundle over $G$ and $A=C_\mu^*(\A)$ with respect to some duality crossed-product functor $\rtimes_\mu$ for $G$.  
We construct the deformed Fell bundle $\A_\sigma$ via the twist $\sigma=(\T\stackrel{\iota}{\into} G_\sigma\stackrel{q}{\onto} G)$ as follows:
consider the pullback Fell bundle over $G_\sigma$, that can be realized as 
$$q^*(\A)=\{(a,\tilde g)\in \A\times G_\sigma: p(a)=q(\tilde g)\},$$
where $p\colon \A\to G$ denotes the bundle projection. The circle group $\T$ acts `diagonally' on $q^*(\A)$ via
$$z\cdot (a,\tilde g):=(\bar z a,z\tilde g),\quad z\in\T,\, a\in \A,\, \tilde g\in G_\sigma.$$
\begin{definition}
    We define the \emph{twisted Fell bundle} $\A_\sigma$ as the quotient Fell bundle 
    $$\A_\sigma:=q^*(\A)/\T$$
    with respect to the diagonal $\T$-action defined above. More concretely, this consists of equivalence classes $[a,\tilde g]$, with the bundle projection 
    $$p^\sigma\colon \A_\sigma\to G; \; p^\sigma[a,\tilde g]:=p(a)=q(\tilde g).$$
    The algebraic operations on $\A_\sigma$ are also induced from those in $q^*(\A)$, that is,
    $$\lambda\cdot [a,\tilde g]+[a',\tilde g]:=[\lambda a+a',\tilde g],$$
    $$[a,\tilde g]\cdot [b,\tilde h]:=[ab,\tilde g\tilde h], \quad [a,\tilde g]^*:=[a^*,\tilde g^{-1}].$$
    for all $\lambda\in \C$, $a,a'\in A_g$, $b\in A_h$ $\tilde g,\tilde h\in G_\sigma$, $g=q(\tilde g)$ and $h=q(\tilde h)$.
\end{definition}

It is a well-known result that $\A_\sigma$ is a Fell bundle over $G$ with respect to  the above operations, see for instance \cite{Kaliszewski-Muhly-Quigg-Williams:Fell_bundles_and_imprimitivity_theoremsI}*{Corollary~A.12}.

\begin{remark}\label{rem:line-Fell-bundle}
    The fibers of $\A_\sigma$ are $A_{\sigma,g}\cong A_g$ as Banach spaces for each $g\in G$. Indeed, choosing $\tilde g\in G_\sigma$ with $q(\tilde g)=g$, the map $a\mapsto [a,\tilde g]$ provides an isometric linear isomorphism $A_g\congto A_{\sigma,g}$. However, $\A$ and $\A_\sigma$ are not isomorphic as topological bundles in general, as shown by the following special case:
    
    If we apply the above construction to the trivial Fell bundle $\A=\C\times G$, we get the twisted Fell line bundle $\L_\sigma$ as  in~\eqref{eq-Lom}, and this is topologically trivial if and only if the twist $\T\into G_\sigma\stackrel{q}\onto G$ admits a continuous section $\s\colon G\to G_\sigma$. Moreover, every Fell line bundle over $G$ is isomorphic to one of this form, see \cite{Doran-Fell:Representations_2}*{VIII.16.2}. 
    
    One can show that, in general, the Fell bundle $\A_\sigma$ is isomorphic to the Fell bundle tensor product $\A\otimes_G \L_\sigma$. 
   Recall that if $\A$ and $\B$ are two Fell bundles over $G$, the (minimal) tensor product $\A\otimes_G\B$ is the Fell bundle over $G$ 
   with fibres $A_g\otimes B_g$ for $g\in  G$, where $\otimes$ here denotes the (external) minimal tensor product of Hilbert modules and multiplication and involution are 
  given on elementary tensors in the obvious way.  Since we do not need this description, we omit further details. Instead, we refer to \cite{Abadie:Tensor} where general tensor products of Fell bundles over groups are discussed, and \cite{KLQ:tensor}*{Section~3} where a similar construction of a (balanced) tensor product of Fell bundles over locally compact groups is used for maximal tensor products.

Applying the construction to the (semidirect product) Fell bundle $\A=A\times_\alpha G$ associated to a (continuous) action $\alpha$ of $G$ on a \cstar{}algebra $A$, the twisted Fell bundle $\A_\sigma$ corresponds to a semidirect product Fell bundle $A\times_{(\alpha,\iota^\sigma)}G$ for the Green-twisted action $(\alpha, \iota^\sigma)$ of $(G_\sigma,\T)$ similar to~\eqref{eq-twistedbeta}.\footnote{It follows from  \cite{Doran-Fell:Representations_2}*{VIII.6}, see also \cite{Buss-Meyer:Crossed}*{Example 3.9}, that every Green twisted action of a pair $(H,N)$, with $N\trianglelefteq H$ a closed normal subgroup,   corresponds to a Fell bundle over $G=H/N$.}
\end{remark}

We are now going to describe the space of sections of the twisted Fell bundle $\A_\sigma$. We start with

\begin{lemma}\label{lem:eq-omsections}
There is a bijection between the sections $\xi:G\to \A_\sigma$ and the set of sections 
\begin{equation}\label{eq-section1}
S(G_\sigma, \A,\iota):=\{\xi:G_\sigma\to \A: \text{$\xi(\tig)\in A_{q(\tig)}$ and $\xi(\tig z)=\bar{z}\xi(\tig)\; \forall \tig\in G_\sigma, z\in \T$}\}.
\end{equation}
for $q^*\A$ which assigns to each section $\xi\in S(G_\sigma, \A,\iota)$ the section 
\begin{equation}\label{eq-section2}
\tilde\xi:G\to \A_{\sigma}; \; \tilde\xi(g)=[\xi(\tig), \tig]\quad \text{with $\tig\in q^{-1}(g)$}.
\end{equation}
Moreover, $\xi$ is continuous (resp.~measurable, resp.~compactly supported)  if and only if $\tilde\xi$ is continuous (resp.~measurable, resp.~compactly supported).
\end{lemma}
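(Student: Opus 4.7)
The plan is to exhibit the bijection explicitly and then transport each of the three regularity properties through it. First, I would verify that the forward map $\xi\mapsto\tilde\xi$ is well-defined. Any two preimages $\tilde g,\tilde g'\in q^{-1}(g)$ differ by a unique $z\in\T$ via $\tilde g'=\tilde g z$, and since $\T$ is central in $G_\sigma$ this also equals $z\tilde g$. Combining the equivariance $\xi(\tilde g z)=\bar z\xi(\tilde g)$ with the defining identification $[\bar z a, z\tilde g]=[a,\tilde g]$ in $\A_\sigma=q^*(\A)/\T$ immediately gives $[\xi(\tilde g z),\tilde g z]=[\bar z\xi(\tilde g), z\tilde g]=[\xi(\tilde g),\tilde g]$, so $\tilde\xi(g)$ is independent of the chosen lift.

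For the inverse map, I would use the observation from Remark~\ref{rem:line-Fell-bundle} that for each $\tilde g\in G_\sigma$ the assignment $a\mapsto [a,\tilde g]$ is a linear isometric isomorphism $A_{q(\tilde g)}\congto A_{\sigma, q(\tilde g)}$. Given $\tilde\xi\colon G\to\A_\sigma$, this lets me define $\xi(\tilde g)\in A_{q(\tilde g)}$ as the unique element with $[\xi(\tilde g),\tilde g]=\tilde\xi(q(\tilde g))$. The required equivariance is then forced: both $[\xi(\tilde g z), z\tilde g]$ and $[\bar z\xi(\tilde g), z\tilde g]$ represent $\tilde\xi(g)$, and injectivity of $a\mapsto [a,z\tilde g]$ yields $\xi(\tilde g z)=\bar z\xi(\tilde g)$. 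That the two constructions are mutually inverse is clear by inspection.

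Finally I would transfer the regularity statements using that $q\colon G_\sigma\to G$ is a principal $\T$-bundle, hence locally trivial in the continuous category. Over any open $U\subseteq G$ admitting a continuous section $\s\colon U\to G_\sigma$ of $q$, one has $\tilde\xi(g)=[\xi(\s(g)),\s(g)]$, and the resulting trivialization identifies the restriction of $\A_\sigma$ to $U$ with the restriction of $\A$ to $\s(U)$; hence $\tilde\xi|_U$ is continuous iff $\xi\circ\s$ is. Covering $G$ by such $U$'s gives the continuity equivalence. The measurable case is identical, now using a global Borel section of $q$, whose existence is \cite{FG}*{Theorem~1}. For compact support, the equivariance of $\xi$ forces its support to be $\T$-saturated, so $\supp(\xi)=q^{-1}(\supp(\tilde\xi))$; since $\T$ is compact the map $q$ is proper, and the two supports are simultaneously compact. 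No step presents a real obstacle; the whole argument is routine bookkeeping once the principal $\T$-bundle picture of $q\colon G_\sigma\to G$ is in hand.
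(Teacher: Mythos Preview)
Your argument is correct; the bijection, its inverse, and the compact-support statement are handled exactly as in the paper. For continuity and measurability you take a genuinely different route: you trivialize using local continuous sections of $q$ (implicitly Gleason's theorem, which the paper invokes later in Lemma~\ref{lem-crosssectional}) and a global Borel section \cite{FG}, reducing both questions to the trivialized situation. The paper instead proves continuity by a direct net argument, exploiting openness of the quotient maps $q^*(\A)\onto\A_\sigma$ and $q\colon G_\sigma\onto G$ together with compactness of $\T$, and then deduces the measurable case from the continuous one via Doran--Fell's characterization of measurable sections as almost-everywhere limits of continuous sections on compacta \cite{Doran-Fell:Representations}*{Proposition~15.4}. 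Your approach is cleaner once the sections are in hand; the paper's avoids any appeal to existence of sections and, for measurability, sidesteps having to verify that a merely Borel trivialization preserves the Doran--Fell notion of measurability (a point your sketch passes over and which would require a word of justification).
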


\begin{proof}  We first check that the section $\tilde\xi:G\to \A_\sigma$ as in~\eqref{eq-section2} is well defined. Indeed, if $\tig, g'\in q^{-1}(g)\in G_{\sigma}$, then there exists a unique $z\in \T$ such that $g'=\tig z$, so that
$$[\xi(g'), g']=[\tilde\xi(\tig z), \tig z]=[\bar{z}\xi(\tig), z\tig]=[\xi(\tig), \tig].$$
Thus $\tilde\xi$ sends the element $g\in G$ to a well-defined element in the fibre $A_{\sigma,g}$ of $A_\sigma$. 
Conversely, if $\tilde\xi:G\to\A_\sigma$ is any section, then for each $g\in G$ there exist  $a_g\in A_g$ and $g'\in q^{-1}(g)$ 
such that $\tilde\xi(g)=[a_g, g']\in A_{\sigma,g}$. But then,  for any $\tig\in q^{-1}(g)$, there is a unique $z\in \T$ 
such that $g'=\tig z$ and then we get 
$$[a_g, g']=[a_g, \tig z]=[za_g, \tig].$$ 
It follows that $\xi(\tig):=za_g$ is the unique element 
in $A_g$ such that $\tilde\xi(g)=[\xi(\tig), \tig]$. We therefore obtain a well-defined element $\xi\in S(G_\sigma,\A,\iota)$
which satisfies \eqref{eq-section2}. 

We now show that continuity of $\tilde\xi$ implies continuity of $\xi$. For this let $(\tig_i)$ be a net in 
$G_\sigma$ which converges to $\tig\in G_\sigma$, and let us write $g_i=q(\tig_i)$ and $g=\tig$.
Then $g_i\to g$ and continuity  of $\tilde\xi$ implies that 
$\tilde\xi(g_i)=[\xi(\tig_i), \tig_i]\to [\xi(\tig), \tig]=\tilde\xi(g)$. 
Since the quotient map $q^*(\A)\onto q^*(\A)/\T=\A_\sigma$ is open we may assume, after passing to a subnet if necessary,  that there is a net  $(z_i)$ in $\T$ such that
$z_i(\xi(\tig_i),\tig_i)= (\xi(z_i\tig_i), z_i\tig_i) \to (\xi(\tig),\tig)$ in $q^*(\A)$. 
But since $\T$ is compact we may assume without loss of generality that
 $z_i\to z$ for some $z\in \T$. But then it follows that $\tig_i\to \bar{z}\tig$, and since $\tig_i\to \tig$ by assumption, we get $z=1$
and hence $\lim_i \xi(\tig_i)=\lim_i\xi(z_i\tig_i)=\xi(\tig)$, which proves continuity of $\xi$. 

Conversely, if $\xi$ is continuous and $g_i\to g$ in $G$, using openness of $q:G_\sigma\onto G$ we may pass to a subnet
to find a net $(\tig_i)$ in $G_\sigma$ and $\tig\in G_\sigma$ such that $\tig_i\to \tig$ and $q(\tig_i)=g_i, q(\tig)=g$.
It then follows that $\tilde\xi(g_i)=[\xi(\tig_i), \tig_i]\to [\xi(\tig),\tig]=\tilde\xi(g)$ and therefore $\tilde\xi$ is continuous as well.

To see that $\tilde\xi$ is measurable if and only if $\xi$ is measurable we recall from \cite{Doran-Fell:Representations}*{Proposition 15.4}
that $\xi$ is measurable if for every compact subset $K$ of $G_\sigma$  there exists a sequence $(\xi_n)$ of continuous sections
such that $\xi_n(\tig)\to \xi(\tig)$ for almost all $\tig \in K$ (and similarly for $\tilde\xi$). But $\xi_n(\tig)\to \xi(\tig)$ for almost all $\tig\in K$ if and only if $\tilde\xi_n(g)\to \tilde\xi(g)$ for 
almost all $g\in q(K)$, and the result follows.

Finally,  it follows from~\eqref{eq-section2} that $\supp\xi=q^{-1}(\supp\tilde\xi)$. Thus  $\xi$ has compact support if and only if $\tilde\xi$ has
compact support. 
\end{proof}

\begin{lemma}\label{lem-crosssectional}
The convolution algebra $C_c(\A_\sigma)$ is isomorphic to the algebra
$$C_c(G_\sigma, \A, \iota):=\{a\in S(G_\sigma,\A,\iota): \text{$a$ is continuous with compact supports}\}$$
with convolution and involution given by 
$$(a*b)_{\tig}=\int_{G} a_{\tilde h} b_{\tilde{h}^{-1}\tig}\, \dd h\quad\text{and}\quad a^*_{\tig}=\Delta(\tig) (a_{\tig^{-1}})^*,$$
where $\tilde{h}\in G_\sigma$ with $h=q(\tilde h)$. 
Moreover, every section $a\in C_c(G_\sigma,\A,\iota)$ can be written as a finite linear combination of functions 
of the form $\tig\mapsto f(\tig)\tilde{a}_g$, $g=q(\tig)$, with $f\in C_c(G_\sigma, \iota)$ and $\tilde{a}\in C_c(\A)$.
\end{lemma}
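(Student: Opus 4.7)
The plan is to reduce everything to the bijection $\xi\leftrightarrow\tilde\xi$ of Lemma~\ref{lem:eq-omsections}, which already gives the identification as topological vector spaces (continuity and compact support are preserved). What remains is to transport the convolution and involution of $C_c(\A_\sigma)$ through this bijection and verify the stated formulas, and then to prove the spanning statement using local trivializations of the principal $\T$-bundle $q\colon G_\sigma\onto G$.

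For the algebraic structure, I would pick $\tilde a,\tilde b\in C_c(\A_\sigma)$ corresponding to $a,b\in C_c(G_\sigma,\A,\iota)$ via Lemma~\ref{lem:eq-omsections}. Writing $\tilde a(h)=[a_{\tih},\tih]$ for any chosen lift $\tih\in q^{-1}(h)$, the product in $\A_\sigma$ gives $\tilde a(h)\cdot \tilde b(h^{-1}g)=[a_{\tih}b_{\tih^{-1}\tig},\tig]$. The key consistency check is that $a_{\tih}b_{\tih^{-1}\tig}\in A_g$ depends only on $h=q(\tih)$ and not on the lift: replacing $\tih$ by $z\tih$ with $z\in\T$ changes $a_{\tih}$ by $\bar z$ and $b_{\tih^{-1}\tig}$ by $z$, and these cancel. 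The Fell-bundle convolution $(\tilde a*\tilde b)(g)=\int_G \tilde a(h)\tilde b(h^{-1}g)\,\dd h$ then translates directly into the stated formula for $(a*b)_{\tig}$. The involution is handled analogously, using the bundle involution $[b,\tih]^*=[b^*,\tih^{-1}]$ and the definition $\tilde a^*(g)=\Delta(g^{-1})\tilde a(g^{-1})^*$ of the involution on $C_c(\A_\sigma)$ (here, $\Delta_{G_\sigma}$ factors through $\Delta_G$ because $\T$ is compact, so the two modular functions are the same under $q$).

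For the spanning claim, the plan is to use that the quotient map $q\colon G_\sigma\onto G$ is a principal $\T$-bundle and therefore admits continuous local sections. Given $a\in C_c(G_\sigma,\A,\iota)$, let $K:=q(\supp a)\subseteq G$, which is compact. Cover $K$ by finitely many open sets $U_1,\dots,U_n$ over which there are continuous sections $\s_i\colon U_i\to G_\sigma$, and choose a partition of unity $\{\phi_i\}$ on a neighborhood of $K$ subordinate to $\{U_i\}$. For each $i$ define $\tilde a^{(i)}\in C_c(\A)$ by $\tilde a^{(i)}_g:=\psi_i(g)\,a(\s_i(g))$ for $g\in U_i$ and zero outside, where $\psi_i\in C_c(U_i)$ equals $1$ on $\supp\phi_i$. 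Next, for $\tig\in q^{-1}(U_i)$ write $\tig=\s_i(g)z$ with $z=\s_i(g)^{-1}\tig\in\T$ and set $f_i(\tig):=\bar z$, which is a continuous section of the line bundle $\L_\sigma$ over $U_i$; multiplying by a $\T$-invariant bump $\tilde\phi_i:=\phi_i\circ q\in C_c(G_\sigma)$ (which is compactly supported since $q$ is proper modulo the compact group $\T$) yields $F_i:=f_i\tilde\phi_i\in C_c(G_\sigma,\iota)$. A direct computation using $a(\tig)=\bar z a(\s_i(g))=f_i(\tig)a(\s_i(g))$ gives $F_i(\tig)\tilde a^{(i)}_g=\phi_i(g)\,a(\tig)$, so summing yields $a(\tig)=\sum_i F_i(\tig)\tilde a^{(i)}_g$ on $q^{-1}(K)$, and both sides vanish elsewhere.

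The main obstacle is the spanning statement: the algebraic verifications are routine once the bijection of Lemma~\ref{lem:eq-omsections} is used, but the decomposition of an element of $C_c(G_\sigma,\A,\iota)$ into elementary tensors requires combining local trivializations of the principal $\T$-bundle $G_\sigma\to G$ with a partition of unity argument and care that the local pieces patch into genuinely continuous, globally defined elements of $C_c(G_\sigma,\iota)$ and $C_c(\A)$, respectively.
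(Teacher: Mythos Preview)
Your proposal is correct and follows essentially the same approach as the paper: both reduce the algebraic part to Lemma~\ref{lem:eq-omsections} and verify the convolution formula by observing that the integrand $a_{\tih}b_{\tih^{-1}\tig}$ is independent of the lift $\tih$, and both prove the spanning statement via local continuous sections of $q$ (the paper cites Gleason's theorem) combined with a partition of unity over $q(\supp a)$. The only cosmetic difference is that the paper splits the partition of unity as $\chi_i=\sqrt{\chi_i}\cdot\sqrt{\chi_i}$, putting one factor into $f_i$ and the other into $\tilde a_i$, whereas you place the full $\phi_i$ on the $f$-side and introduce an auxiliary bump $\psi_i$ to ensure $\tilde a^{(i)}$ has compact support; both achieve the same decomposition.
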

\begin{proof} It follows from Lemma \ref{lem:eq-omsections} that 
$$C_c(G_\sigma,\A,\iota)\to C_c(\A_\sigma); a\mapsto \Big(g\mapsto [a_{\tig},\tig]\Big)\;\text{if $g=q(\tig)$}$$
is a linear bijection. We need to check that it preserves convolution and involution, respectively.
We first observe that by the conditions on the elements in $S(G_\sigma, \A, \iota)$ the integrand in the 
 convolution  integral in the lemma is constant on $\T$-cosets in $G_\sigma$. Therefore, the integral makes sense. 
Now, if $\tilde{a}:G\to\A_\sigma$ is given by $\tilde{a}_g=[a_{\tig}, \tig]$ for $\tig\in G_\sigma$ with $q(\tig)=g$,
and similarly for $b$, 
we compute
\begin{align*}
(\tilde{a}*\tilde{b})_g&=\int_G \tilde{a}_h\tilde{b}_{h^{-1}g}\,\dd h
= \int_G [a_{\tilde{h}}, \tilde{h}][b_{\tilde{h}^{-1}\tig}, \tilde{h}^{-1}\tig]\,\dd h\\
&=\int_G [a_{\tilde{h}}b_{\tilde{h}^{-1}\tig},\tig]\,\dd h
=[(a*b)_{\tig},\tig],
\end{align*}
which settles the claim on the convolution. We leave it as an exercise to check that the involution is preserved as well.

For the final assertion we use the fact that by Gleason's theorem \cite{Gleason}*{Theorem 4.1} there always exist local continuous section for the quotient map $q:G_\sigma\to G$. Thus, given an element $a\in C_c(G_\sigma,\A,\iota)$, we can find a finite open cover
$U_1,\ldots, U_l$ of $K:=q(\supp a)\subseteq G$ together with continuous maps $\s_i:U_i\to q^{-1}(U_i)$ such that 
$q\circ \s_i=\id_{U_i}$ for all $1\leq i\leq l$.  Let $\{\chi_i: 1\leq i\leq l\}\subseteq C_c^+(G)$ be a partition of the unit of $K$ subordinate to $\{U_i: 1\leq i\leq l\}$.
For each $\tig\in q^{-1}(U_i)$  there exists a unique element $z_{\tig,i}\in \T$ 
such that  $\tig=z_{\tig,i}\s_i(g)$, with $g=q(\tig)$. We use this to define $f_i:G_\sigma\to \C$ by $f_i(\tig)=\bar{z}_{\tig,i}\sqrt{\chi_i(g)}$ for $\tig=z_{\tig,i}\s_i(g)\in q^{-1}(U_i)$ and 
$0$ else. Moreover, we define $\tilde{a}_i\in C_c(\A)$ by $\tilde{a}_i(g)=\sqrt{\chi_i(g)}a_{\s_i(g)}$ for all $g\in U_i$ and $0$ else.
Then for all $\tig\in G_\sigma$ we get with $g=q(\tig)$:
\begin{align*}
a({\tig})&=\sum_{i=1}^l\chi_i(g)a({\tig})=\sum_{g\in U_i}\chi_i(g)a({z_{\tig,i}\s_i(g)})\\
&=\sum_{g\in U_i} \bar{z}_{\tig,i}\sqrt{\chi_i(g)}\sqrt{\chi_i(g)}a({\s_i(g)})
=\sum_{i=1}^l f_i(\tig)\tilde{a}_i(g)
\end{align*}
and the result follows.
\end{proof}

In the special case where the extension $\sigma=(\T\into G_\sigma\stackrel{q}{\onto} G)$ 
admits a continuous section $\s:G\to G_\sigma$ for the quotient map $q$ with $\s(e)=e$, the twisted Fell bundle $\A_\sigma$ has a much more direct description as follows: let $\om=\partial \s\in Z^2(G,\T)$ denote the cocycle corresponding to $\sigma$ and $\s$.
Since $\s$ is continuous, $\om$ is continuous as well. We then may define an $\om$-twisted multiplication on the Banach bundle $\A$ 
by the formulas
    \begin{equation}\label{eq:twisted-operations-Fell-bundle}
        a_g\cdot_\omega a_h:=\omega(g,h)a_ga_h,\quad a_g^{*,\omega}:=\overline\omega(g^{-1},g)a_g^*,\quad a_g\in A_g,a_h\in A_h.
    \end{equation}
This gives a new Fell bundle structure on the Banach bundle $\A$ which we denote by $\A_\om$.
For the following proposition recall that for any fixed element $\tig\in q^{-1}(G)\subseteq G_\sigma$ 
the elements in the fibre $A_{\sigma,g}$ have a unique representative of the form $(a_g, \tig)\in q^*\A$. 
In particular, if we apply this to $\tig=\s(g)$ we can identify $A_g$ with $A_{\sigma,g}$ via $a_g\mapsto [a_g, \s(g)]$.

\begin{proposition}\label{prop-om-Fellbundle}
Let $\A$, $\sigma$ and $\om$ be as above. Then the map
$$\Theta: \A_\om\to \A_\sigma: a_g\mapsto [a_g, \s(g)]$$
is an isomorphism of Fell bundles.
\end{proposition}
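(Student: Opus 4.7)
The plan is to verify that $\Theta$ is (a) a fibrewise isometric linear bijection, (b) continuous with continuous inverse, and (c) compatible with the multiplications and involutions. For (a), observe that $A_{\om,g}=A_g$ as Banach spaces, and by Remark~\ref{rem:line-Fell-bundle} the assignment $a_g\mapsto [a_g,\s(g)]$ is an isometric linear isomorphism $A_g\to A_{\sigma,g}$. For (b), continuity of $\Theta$ follows from continuity of $\s\colon G\to G_\sigma$, since $\Theta$ is the composition $\A\to q^*(\A)\to q^*(\A)/\T=\A_\sigma$ sending $a_g$ to $[a_g,\s(g)]$; for the inverse, any $[a,\tig]\in\A_\sigma$ with $g=q(\tig)$ can be written uniquely as $[za,\s(g)]$ with $z=\tig\s(g)^{-1}\in\T$, and since $\tig\mapsto\tig\s(q(\tig))^{-1}$ is continuous this provides a continuous $\Theta^{-1}$.

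For the multiplicative part of (c), recall that $\om=\partial\s$ means $\s(g)\s(h)=\om(g,h)\s(gh)$, and the defining $\T$-action $z\cdot(a,\tig)=(z^{-1}a,z\tig)$ on $q^*(\A)$ yields the rule $[a,z\tig]=[za,\tig]$ in the quotient. Hence
\begin{align*}
\Theta(a_g)\Theta(a_h) &= [a_g,\s(g)][a_h,\s(h)] = [a_ga_h,\s(g)\s(h)]\\
&= [a_ga_h,\om(g,h)\s(gh)] = [\om(g,h)a_ga_h,\s(gh)] = \Theta(a_g\cdot_\om a_h).
\end{align*}

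For the involution, the cocycle identity applied to the triple $(g,g^{-1},g)$ together with the normalization $\om(e,\cdot)=\om(\cdot,e)=1$ gives $\om(g,g^{-1})=\om(g^{-1},g)$. Since $\s(e)=1$ we have $\s(g)\s(g^{-1})=\om(g,g^{-1})$, so $\s(g)^{-1}=\overline{\om(g^{-1},g)}\,\s(g^{-1})$; therefore
\begin{align*}
\Theta(a_g)^* &= [a_g,\s(g)]^* = [a_g^*,\s(g)^{-1}] = [a_g^*,\overline{\om(g^{-1},g)}\s(g^{-1})]\\
&= [\overline{\om(g^{-1},g)}a_g^*,\s(g^{-1})] = \Theta(a_g^{*,\om}).
\end{align*}
The main delicate point is the sign-bookkeeping of the $\T$-action conventions on $q^*(\A)$ and the cocycle-normalization identity $\om(g,g^{-1})=\om(g^{-1},g)$; once those are pinned down, the verification reduces to straightforward manipulations in the pullback bundle.
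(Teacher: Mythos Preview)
Your proof is correct and follows essentially the same strategy as the paper's: verify fibrewise bijectivity, homeomorphism, and compatibility with the algebraic operations. Your argument is in fact more explicit than the paper's --- the paper defers the homeomorphism claim to the techniques of Lemma~\ref{lem:eq-omsections} and leaves the involution check to the reader, while you supply both; your multiplicativity computation also has the signs right throughout (the paper's displayed computation contains a harmless pair of cancelling sign slips).
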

\begin{proof} As observed above, the map $\Theta$ is a bijection on each fibre, so we only need to see that it is a homeomorphism and preserves multiplication and involution. Indeed, arguments similar to those given in the proof of Lemma \ref{lem:eq-omsections} show that $\Theta$ is a homeomorphism.

To check that $\Theta$ is multiplicative let $[a_g, \s(g)]\in A_{\sigma, g}$ and  $[b_h, \s(h)]\in A_{\sigma,h}$. Then
\begin{align*}
&\Theta(a_g\cdot_{\om}b_h)=\Theta(\overline{\om(g,h)}a_gb_h)\\
&=[\overline{\om(g,h)}a_gb_h, \s(gh)]=[a_gb_h, \om(g,h)\s(gh)]\\
&=[a_gb_h, \s(g)\s(h)]=[a_g, \s(g)][b_h, \s(h)].
\end{align*} 
A similar computation shows that $\Theta$ also preserves involution.
\end{proof}

In what follows, if $\A$ is a Fell bundle over $G$, and $\sigma=(\T\into G_\sigma\stackrel{q}\onto G)$ is a twist for $G$, then 
we want to prove that  the algebras of kernels $\kk\A$ and $\kk{\A_\sigma}$ are isomorphic.
Recall that $\kk\A\cong C^*(\A)\rtimes_{\delta_\A}\dualG$ and that under this isomorphism the 
structural homomorphism $j_{C_0(G)}$ and the dual action $\dual{\delta_\A}$ are given as
$$\big(j_{C_0(G)}(f)k\big)(g,h)=f(g)k(g,h) \quad \text{and}\quad \big(\dual{\delta_\A}(g)k\big)(s,t)=\Delta(g)k(sg,tg).$$
Thus the dual weak $G\rtimes G$-algebra $\big(C^*(\A)\rtimes_{\delta_\A}\dualG, \dual{\delta_\A}, j_{C_0(G)}\big)$ is given as
$$
(B,\beta,\phi)=(\kk\A, \dual{\delta_\A}, j_{C_0(G)}).
$$
Similarly, the dual weak $G\rtimes G$-algebra for $(C^*(\A_\sigma), \delta_{\A_\sigma})$ is given by the triple
$$(C,\gamma,\psi)=(\kk{\A_\sigma}, \dual{\delta_{\A_\sigma}}, j^\sigma_{C_0(G)}).$$
We aim to show that the $\sigma$-twisted weak $G\rtimes G$-algebra $(B_\sigma,\beta_\sigma,\phi_\sigma)$ of Definition \ref{def-cosystem} 
is isomorphic to 
$(C,\gamma,\psi)$.
This will then show that for $(A,\delta)=(C_\mu^*(\A), \delta_\mu)$ and any duality crossed-product functor $\rtimes_\mu$ the 
deformed cosystem $(A^\sigma, \delta^\sigma)$ is isomorphic to $(C_\mu^*(\A_\sigma), \delta^\sigma_\mu)$.

We start with a description of $\kc{\A_\sigma}$ and $\kb{\A_\sigma}$ as follows:

\begin{lemma}\label{lem-kcAsigma}
The algebra  $\kc{\A_\sigma}$ can be identified with  the space of  continuous
compactly supported functions 
$k: G_\sigma\times G_\sigma\to \A$ satisfying the relations
\begin{equation}\label{eq-relation}
k(z\tilde g, u\tilde h)=\bar{z}u k(\tilde g, \tilde h)\in A_{gh^{-1}}\quad \forall z,u\in \T, \tig \in q^{-1}(g), \tih\in q^{-1}(h).
\end{equation}
Under this identification, convolution and involution are  given by the formulas 
\begin{equation}\label{eq-convinvFell}
\begin{split}
k*l(\tig, \tih)&=\int_{G} k(\tig,\tir)\cdot l(\tir,\tih)\dd r\quad\text{and}\\
k^*(\tig, \tih)&=k(\tih, \tig)^*.
\end{split}
\end{equation}
where $r=q(\tir)$ and  the product $\cdot$ and involution $^*$ are taken from $\A$.
Similarly, the algebra $\kb{\A_\sigma}$ can be identified with bounded measurable compactly supported 
functions satisfying \eqref{eq-relation}. 
\end{lemma}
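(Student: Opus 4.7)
The plan is to extend the bijection of Lemma~\ref{lem:eq-omsections} from $\A_\sigma$ over $G$ to the pullback Fell bundle $\nu^*(\A_\sigma)$ over $G\times G$, where $\nu(g,h)=gh^{-1}$. Given a section $\tilde k:G\times G\to\A_\sigma$ with $\tilde k(g,h)\in A_{\sigma,gh^{-1}}$, I choose the canonical lift $\tig\tih^{-1}\in q^{-1}(gh^{-1})$ of the base point and define $k:G_\sigma\times G_\sigma\to\A$ as the unique function with
\[
\tilde k(g,h)=[k(\tig,\tih),\tig\tih^{-1}]\quad\text{for all }\tig\in q^{-1}(g),\,\tih\in q^{-1}(h).
\]

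The key calculation is the transformation property. Replacing $(\tig,\tih)$ by $(z\tig,u\tih)$ sends $\tig\tih^{-1}$ to $z\bar u\,\tig\tih^{-1}$ by centrality of $\T$, so comparing the two representatives of $\tilde k(g,h)$ via the equivalence relation $(a,x)\sim(\bar w a, wx)$ on $q^*(\A)$ (with $w=\bar z u$) forces $k(z\tig, u\tih)=\bar z u\, k(\tig,\tih)\in A_{gh^{-1}}$, which is precisely~\eqref{eq-relation}. Conversely, any $k$ satisfying this relation defines $\tilde k$ unambiguously by the same formula. The equivalences for continuity, measurability and compact support then follow essentially verbatim from the arguments in Lemma~\ref{lem:eq-omsections}, now applied to the open $\T\times\T$-principal quotient map $q\times q:G_\sigma\times G_\sigma\to G\times G$; in particular $\supp k=(q\times q)^{-1}(\supp\tilde k)$ is compact whenever $\supp\tilde k$ is, because $\T$ is compact. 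The $\kb{\A_\sigma}$ version is identical, with measurability replacing continuity throughout.

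To verify the convolution formula one computes
\[
\tilde k(g,r)\tilde l(r,h)=[k(\tig,\tir),\tig\tir^{-1}]\cdot[l(\tir,\tih),\tir\tih^{-1}]=[k(\tig,\tir)l(\tir,\tih),\tig\tih^{-1}]
\]
and observes that the integrand $k(\tig,\tir)l(\tir,\tih)$ depends on $\tir$ only through $r=q(\tir)$: the factor $z$ produced by $k(\tig,z\tir)$ cancels the $\bar z$ from $l(z\tir,\tih)$, so integration over $r\in G$ is well defined and yields the first formula of~\eqref{eq-convinvFell}. The involution identity is immediate from $\tilde k^*(g,h)=\tilde k(h,g)^*$ together with $(\tih\tig^{-1})^{-1}=\tig\tih^{-1}$. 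I expect the main technical annoyance to be the bookkeeping of the asymmetric signs of the $\T$-equivariance, since the $\T$-action uses $\bar z$ on the $\A$-factor but $z$ on the $G_\sigma$-factor, and the $\tih^{-1}$ in $\tig\tih^{-1}$ flips the $\T$-weight on the second variable; once the representative $\tig\tih^{-1}$ is committed to, however, the remaining verifications are mechanical.
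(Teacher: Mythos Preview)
Your proof is correct and follows essentially the same approach as the paper: both proceed by writing $\tilde k(g,h)=[k(\tig,\tih),\tig\tih^{-1}]$ and reducing all verifications to Lemma~\ref{lem:eq-omsections}. One minor slip: in your parenthetical the correct value is $w=z\bar u$ (so that $w\tig\tih^{-1}=(z\tig)(u\tih)^{-1}$), not $w=\bar z u$; your conclusion $k(z\tig,u\tih)=\bar z u\,k(\tig,\tih)$ is nonetheless correct, since $\bar w=\bar z u$.
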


Note  that it follows from \eqref{eq-relation} that the integrand in the convolution integral only depends on 
$r=q(\tir)$, so the formula makes sense. 

\begin{proof} 
The proof can be done similarly to the proof of Lemma~\ref{lem:eq-omsections}: if 
$\tilde{k}\in \kc{\A_\sigma}$, then for $(t,g)\in G\times G$ and $(\tig,\tih)\in q^{-1}(g)\times q^{-1}(h)$ 
we have $\tilde{k}(g,h)=A_{\sigma, gh^{-1}}$. Hence,  there is a unique element $k(\tig, \tih)\in A_{gh^{-1}}$ such that 
$\tilde{k}(g,h)=[k(\tig,\tih), \tig\tih^{-1}]$. Exactly as in the proof of  Lemma~\ref{lem:eq-omsections} we can check that 
$\tilde{k}\leftrightarrow k$ gives a bijection between sections of $\A_\sigma$ and functions as in~\eqref{eq-relation}, and that $\tilde{k}$ is continuous (with compact supports) if and only if $k$ is. It is then straightforward to check the formulas for convolution and involution as in~\eqref{eq-convinvFell}.
\end{proof}

\begin{remark}\label{rem-formula}
Using Lemma~\ref{lem:eq-omsections},
it follows from~\eqref{eq-kernel} that the kernel $k_{a,f}$ for $a\in C_c(\A_\sigma)$ and $f\in C_c(G)$ can be 
identified with the function $k_{a,f}:G_\sigma\times G_\sigma\to \A$ given by
\begin{equation}\label{eq-kaf} k_{a,f}(\tilde g,\tilde h)= a\big(\tilde g \tilde h^{-1}\big)f(g)\Delta(g)^{-1}.
\end{equation}
Since $(z\tilde g)(u\tilde h)^{-1}=z\overline u \tilde g\tilde h^{-1}$ and $a(z\tilde g)=\overline z a(\tilde g)\in A_g$,
these kernels satisfy~\eqref{eq-relation}. 
\end{remark}

From now on, we shall always identify 
$\kc{\A_\sigma}$ with the set of functions $k:G_\sigma\times G_\sigma\to\A$ as in Lemma \ref{lem-kcAsigma} above. 
In the following proposition let $\s:G\to G_\sigma$ be any Borel cross-section for the quotient map $q:G_\sigma\to G$, and let
$\om=\partial\s\in Z^2(G,\T)$ be the associated Borel cocycle. 
\begin{proposition}\label{prop-Fellisoom}
Given a twist $\sigma=(\T\into G_\sigma\onto G)$ and a  
Fell bundle $\A$ over $G$, with associated twisted Fell bundle $\A_\sigma$, we have an isomorphism of kernel \cstar{}algebras
\begin{equation}\label{eq:iso-kernels-algebras}
\Phi\colon \kk{\A_\sigma}\congto\kk{\A},\quad k\mapsto \Phi(k)(g,h):=k(\s(g),\s(h)).
\end{equation}
The isomorphism $\Phi$ commutes with the canonical structural homomorphisms from 
$\contz(G)$ and sends the dual $G$-action $\dual{\delta_{\A_\sigma}}$ on $\kk{\A_\sigma}$ to the $G$-action $\dual\delta^\omega$ on $\kk{\A}$ given by
\begin{equation}\label{eq:twisted-dual-action}
\dual\delta^\omega_t(k)(g,h):=\Delta(t)\overline{\om(g,t)}\om(h,t)k(gt,ht)
\end{equation}
for all $k\in \kb{\A}$, $t,g,h\in G$.
\end{proposition}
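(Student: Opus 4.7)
My plan is to use the explicit description of $\kk{\A_\sigma}$ from Lemma~\ref{lem-kcAsigma} as (bounded, compactly supported) $\T$-equivariant kernels $k\colon G_\sigma\times G_\sigma\to\A$ satisfying $k(z\tig,u\tih)=\bar z u\,k(\tig,\tih)$, and then simply pull back along the Borel section $\s$. Concretely, I would first define $\Phi$ on the $L^2$-dense subalgebra $\kb{\A_\sigma}$ of bounded measurable compactly supported equivariant kernels by the stated formula $\Phi(k)(g,h)=k(\s(g),\s(h))$. Since $q(\s(g))q(\s(h))^{-1}=gh^{-1}$, we have $\Phi(k)(g,h)\in A_{gh^{-1}}$, and measurability of $\s$ together with compact support of $k$ yield $\Phi(k)\in\kb{\A}$. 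An inverse is given by $\Psi(\tilde k)(\tig,\tih):=\bar z u\,\tilde k(g,h)$, where $\tig=z\s(g)$, $\tih=u\s(h)$; the required equivariance is built in.

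Next I would verify that $\Phi$ is a $*$-algebra homomorphism isometric for $\|\cdot\|_2$. Multiplicativity uses~\eqref{eq-convinvFell} together with the fact that the integrand in $k*l$ only depends on $r=q(\tir)$, so one may substitute $\tir=\s(r)$ and obtain $(\Phi(k)*\Phi(l))(g,h)$ directly. Involution is immediate from $k^*(\tig,\tih)=k(\tih,\tig)^*$. For the norm, the identification $\|k(\s(g),\s(h))\|=\|k(\tig,\tih)\|$ (the equivariance is unimodular in $\T$) gives $\|\Phi(k)\|_2=\|k\|_2$. Therefore $\Phi$ extends to an isomorphism between the Banach $*$-algebra $L^2$-completions $\ktwo{\A_\sigma}\cong\ktwo{\A}$, and passing to enveloping $C^*$-algebras produces the desired isomorphism $\kk{\A_\sigma}\cong\kk{\A}$.

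For the $\contz(G)$-equivariance, I invoke~\eqref{eq-jGk} and its analogue for $\A_\sigma$: on the equivariant-kernel picture, $j_{\contz(G)}^\sigma(f)$ acts by $(f\cdot k)(\tig,\tih)=f(q(\tig))k(\tig,\tih)$, so $\Phi(f\cdot k)(g,h)=f(g)k(\s(g),\s(h))=f(g)\Phi(k)(g,h)$, and likewise for the right action. For the dual-action intertwining, I first translate formula~\eqref{eq-hatdelta} into the $G_\sigma$-picture, which gives $\dual{\delta_{\A_\sigma}}_t(k)(\tig,\tih)=\Delta(t)k(\tig\tit,\tih\tit)$ for any lift $\tit\in q^{-1}(t)$; taking $\tit=\s(t)$, the cocycle identities $\s(g)\s(t)=\om(g,t)\s(gt)$ and $\s(h)\s(t)=\om(h,t)\s(ht)$ together with the $\T$-equivariance of $k$ yield exactly $\Delta(t)\overline{\om(g,t)}\om(h,t)\,\Phi(k)(gt,ht)$, which is~\eqref{eq:twisted-dual-action}.

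I expect the main subtle point to be the fact that $\s$ is only Borel, so $\Phi$ does not send $\kc{\A_\sigma}$ into $\kc{\A}$; this forces one to work on the level of $\kb{\A}$ and exploit the identification (recalled above Proposition~\ref{prop-kernels}) of $\ktwo{\A}$ as the $L^2$-completion of either $\kc{\A}$ or $\kb{\A}$. Once that is handled, the appearance of the Borel cocycle $\om$ in~\eqref{eq:twisted-dual-action} (and the fact that the resulting formula does define an action on $\kk{\A}$, cf.\ Corollary~\ref{cor:structural-hom-ext} and \cite{BE:deformation}*{Proposition~5.13}) falls out of the calculation rather than having to be imposed.
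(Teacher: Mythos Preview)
Your proposal is correct and follows essentially the same approach as the paper: define $\Phi$ on $\kb{\A_\sigma}$ via the Borel section, check it is a $\|\cdot\|_2$-isometric \Star{}isomorphism onto $\kb{\A}$ (with inverse exactly your $\Psi$), extend to $\ktwo{\cdot}$ and hence to the enveloping \cstar{}algebras, and then verify the $\contz(G)$- and $G$-equivariance by direct computation. Your write-up is in fact slightly more explicit than the paper's in justifying multiplicativity and the dual-action calculation, but the underlying argument is identical.
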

\begin{proof}
The discussion preceding Proposition~\ref{prop-kernels} shows that we can regard $\kk{\A}$ also  as the enveloping \cstar{}algebra of the Banach \Star{}algebra $\ktwo{\A}$, the  completion of the space $\kb{\A}$ 
with respect to the $L^2$-norm
    $\|k\|_2=\left(\int_{G\times G} \| k(g,h)\|^2\,\dd g\dd h\right)^{\frac{1}{2}}$.
Now given the Borel section $\s\colon G\to G_\sigma$, we define
$$\Phi: \kb{\A_\sigma}\to\kb{\A};\quad  \Phi(k)(g,h)=k(\s(g), \s(h)).$$ 
Straightforward computations show that this map preserves convolution and involution, and is isometric for $\|\cdot\|_2$. 
To see that it is surjective, observe that for every $l\in \kb\A$ we can define $k\in \kb{\A_\sigma}$ by $k(\s(g)z, \s(h)u):=\bar{z}ul(g,h)$, and then $l=\Phi(k)$.

It follows that $\Phi$ extends to an isometric isomorphism between $\ktwo{\A_\sigma}$ and $\ktwo\A$, and hence to their enveloping \cstar{}algebras $\kk{\A_\sigma}$ and $\kk{\A}$. It follows from~\eqref{eq-jGk} that this isomorphism preserves the structure maps $j_{\contz(G)}$ and $j_{\contz(G)}^\sigma$. Finally, for the dual actions we compute 
\begin{multline*}
\Phi(\dual{\delta^\sigma}_t(k))(g,h)=\dual{\delta^\sigma}_t(\s(g),\s(h))=\Delta(t)k(\s(g)\s(t),\s(h)\s(t))\\
=\Delta(t)k(\omega(g,t)\s(gt),\omega(h,t)\s(h,t))=\Delta(t)\overline{\om(g,t)}\om(h,t)k(gt,ht).
\end{multline*}
This yields the final assertion involving the dual actions.
\end{proof}

The above proposition  provides the main step of the proof of 
 
 \begin{theorem}\label{thm-fell-bundle-om}
 Let $\A\to G$ be a Fell bundle and let $\sigma=(\T\into G_\sigma\onto G)$ be a twist over $G$. 
 Then the deformation $(B_\sigma, \beta_\sigma, \phi_\sigma)$  of the weak $G\rtimes G$-algebra $(B, \beta,\phi)=(\kk{\A}, \dual{\delta_\A}, j_{C_0(G)})$ is isomorphic to 
 $(\kk{\A_\sigma}, \dual{\delta_{\A_\sigma}}, j_{C_0(G)}^\sigma)$.
 Hence, for every  duality crossed-product functor $\rtimes_\mu$ for $G$ the dual $G$-coaction $\big(C^*_\mu(\A_\sigma), \delta^\sigma_\mu)$ is isomorphic to the deformation
 $(A^\sigma, \delta^\sigma)$ of  $(A,\delta)=(C_\mu^*(\A), \delta_\mu)$.
 \end{theorem}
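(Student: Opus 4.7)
The strategy is to invoke Proposition~\ref{prop-iso-fixed}. By Proposition~\ref{prop-mu-cross-sectional-algebra}, the $\mu$-Landstad dual of $(\kk{\A_\sigma}, \dual{\delta_{\A_\sigma}}, j_{C_0(G)}^\sigma)$ is $(C_\mu^*(\A_\sigma), \delta^\sigma_\mu)$, whereas by definition the $\mu$-Landstad dual of $(B_\sigma, \beta_\sigma, \phi_\sigma)$ is the deformation $(A^\sigma, \delta^\sigma)$. Hence the second assertion will follow at once from the first. To prove the first assertion, the plan is to proceed in two steps: (i) use \cite{BE:deformation}*{Proposition~5.13} to replace $(B_\sigma,\beta_\sigma,\phi_\sigma)$ by an isomorphic weak $G\rtimes G$-algebra that lives on the un-deformed \cstar{}algebra $\kk\A$ with a twisted $G$-action; (ii) apply Proposition~\ref{prop-Fellisoom} to transport this to $(\kk{\A_\sigma},\dual{\delta_{\A_\sigma}}, j_{C_0(G)}^\sigma)$.

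For step (i), I would appeal to Corollary~\ref{cor:structural-hom-ext}, which extends $\phi=j_{C_0(G)}$ to a $G$\nb{}equivariant unital embedding $\bar\phi\colon L^\infty(G)\to\M(\kk\A)$. This puts us precisely in the setting of \cite{BE:deformation}*{Proposition~5.13}: choosing any Borel section $\s\colon G\to G_\sigma$ with $\s(e)=1$ and letting $\omega=\partial\s\in Z^2(G,\T)$ be the associated Borel cocycle, one gets an isomorphism $(B_\sigma,\beta_\sigma,\phi_\sigma)\cong (\kk\A,\beta^\omega,j_{C_0(G)})$, where $\beta^\omega_t=\Ad U_\omega(t)\circ\dual{\delta_\A}_t$ with $U_\omega(t)=\bar\phi(u_\omega(t))$ and $u_\omega(t)(r)=\overline{\omega(r,t)}$ as in \eqref{eq-uomega}.

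Next, I would compute $\beta^\omega$ explicitly on kernels. Using the formulas from Proposition~\ref{prop-kernels} for $\dual{\delta_\A}$ and from Corollary~\ref{cor:structural-hom-ext} for the left and right actions of $\bar\phi$, a direct computation for $k\in \kb\A$ gives
\begin{align*}
    \beta^\omega_t(k)(g,h) &= u_\omega(t)(g)\,\dual{\delta_\A}_t(k)(g,h)\,\overline{u_\omega(t)(h)}\\
    &= \overline{\omega(g,t)}\,\Delta(t)\,k(gt,ht)\,\omega(h,t)\\
    &= \Delta(t)\,\overline{\omega(g,t)}\,\omega(h,t)\,k(gt,ht),
\end{align*}
which is exactly the formula \eqref{eq:twisted-dual-action} for $\dual\delta^\omega$. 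Thus $(B_\sigma,\beta_\sigma,\phi_\sigma)\cong (\kk\A,\dual\delta^\omega, j_{C_0(G)})$ as weak $G\rtimes G$-algebras.

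For step (ii), Proposition~\ref{prop-Fellisoom} already delivers an isomorphism $\Phi\colon \kk{\A_\sigma}\congto\kk\A$ that intertwines the $C_0(G)$-structures and transports $\dual{\delta_{\A_\sigma}}$ to precisely $\dual\delta^\omega$. Composing the two isomorphisms yields
\[
(B_\sigma,\beta_\sigma,\phi_\sigma)\;\cong\;(\kk\A,\dual\delta^\omega, j_{C_0(G)})\;\cong\;(\kk{\A_\sigma},\dual{\delta_{\A_\sigma}}, j_{C_0(G)}^\sigma),
\]
and Proposition~\ref{prop-iso-fixed} then gives the claim for the $\mu$-coactions. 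The main subtlety I anticipate is purely conceptual: ensuring that $\beta^\omega$, which a priori is defined using $L^\infty(G)$\nb{}multipliers and a merely Borel cocycle, really defines a continuous $G$-action on $\kk\A$. This is precisely what \cite{BE:deformation}*{Proposition~5.13} handles, and its hypothesis is met thanks to Corollary~\ref{cor:structural-hom-ext}. Once that is in place, the remainder reduces to the routine kernel computations outlined above and the already-established Propositions~\ref{prop-Fellisoom} and \ref{prop-iso-fixed}.
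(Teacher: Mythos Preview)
Your proposal is correct and follows essentially the same route as the paper's proof: both invoke Corollary~\ref{cor:structural-hom-ext} to extend $\phi$ to $L^\infty(G)$, then apply the relevant proposition from \cite{BE:deformation} (the paper cites Proposition~5.15 rather than 5.13, but the content is the same) to identify $(B_\sigma,\beta_\sigma,\phi_\sigma)$ with $(\kk\A,\dual\delta^\omega,j_{C_0(G)})$, and finally use Proposition~\ref{prop-Fellisoom} to pass to $(\kk{\A_\sigma},\dual{\delta_{\A_\sigma}},j_{C_0(G)}^\sigma)$; the deduction of the second assertion via Propositions~\ref{prop-mu-cross-sectional-algebra} and~\ref{prop-iso-fixed} is also identical. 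Your explicit kernel computation of $\beta^\omega$ is a welcome addition that the paper leaves implicit.
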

 \begin{proof}
We know from Corollary~\ref{cor:structural-hom-ext} that the structural homomorphism $\phi\colon \contz(G)\to \M(\kk\A)$ extends to $\bar\phi\colon L^\infty(G)\to \M(\kk\A)$. It follows 
from \cite{BE:deformation}*{Proposition~5.15} that the deformed \cstar{}algebra $B_\sigma=\kk{\A}_\sigma$ is isomorphic to $B=\kk{\A}$, which is also isomorphic to $\kk{\A_\sigma}$ by Proposition~\ref{prop-Fellisoom}.  All these isomorphisms preserve the structural homomorphism from $\contz(G)$, and the dual $G$-action on $\kk{\A_\sigma}$ is sent to the action $\dual\delta^\omega\colon G\car \kk{\A}$ given by~\eqref{eq:twisted-dual-action}. This also equals the $G$-action $\Ad U_\omega(t)\circ\dual\delta$, where $U_\omega(t)=\bar\phi(u_\omega(t))$, with $u_\omega(t)(g)=\overline{\omega(g,t)}$. 
This is precisely the action that appears in \cite{BE:deformation}*{Proposition~5.15}. It follows that $(B_\sigma, \beta_\sigma, \phi_\sigma)\cong (\kk{\A_\sigma}, \dual{\delta_{\A_\sigma}}, j_{C_0(G)}^\sigma)$.
The final assertion follows from Proposition \ref{prop-mu-cross-sectional-algebra} in combination with Proposition~\ref{prop-iso-fixed}.
 \end{proof}

For use in the next section, we now want to give an alternative description of the isomorphism $(B_\sigma,\beta_\sigma,\phi_\sigma)\cong (\kk{\A_\sigma}, \dual{\delta_{\A_\sigma}}, j_{C_0(G)}^\sigma)$ of the above theorem, which does not depend on a choice of a Borel section $\s:G\to G_\sigma$. The need for this comes from the fact that in general, if 
$$\Sigma=(X\times \T\into \mathcal G_\Sigma\onto X\times G)$$
is a continuous family of twists $\sigma_x=(\T\into G_{\sigma_x}\onto G)$, $x\in X$, as introduced in  \cite{BE:deformation} to study continuity properties of our deformation process, we do not know whether there exists a global Borel cross-section $\mathfrak{S}:X\times G\to \G_\Sigma$ which induces Borel cross-sections $\s_x:G\to G_{\sigma_x}$ in the fibres.

To overcome this problem, if $(B,\beta,\phi)=(\kk\A, \widehat{\delta_\A}, j_{C_0(G)})$ as above, recall the 
Hilbert $B$-module 
$$\L(G_\sigma,B)=\L(G_\sigma,\kk{\A})=C_0(G_\sigma,\iota)\otimes_{C_0(G)}\kk{\A}.$$
Given a section $\s:G\to G_\sigma$, it follows from Corollary~\ref{cor:structural-hom-ext},  \cite{BE:deformation}*{Proposition~5.15} and~\eqref{eq-jGk} that 
$\L(G_\sigma,B)\cong B$ as Hilbert $B$-modules with
isomorphism given by 
\begin{equation}\label{eq:isomorphism-L(G,B)=B}
\Theta\colon \L(G_\sigma,\kk{\A})\congto \kk{\A}, \quad f\otimes k\mapsto \Theta(f\otimes k)(g,h):=f(\s(g))k(g,h).
\end{equation}
In order to  provide an alternative description of $\L(G_\sigma,B)$, we let $\X_c(\A)$ denote the set of compactly supported continuous functions 
$$\xi: G_\sigma\times G\to \A\quad\text{with} \quad\xi(\tilde g, h)\in A_{gh^{-1}}\quad\text{and}\quad \xi(z\tilde g, h)=\bar{z}\xi(\tilde g, h),$$
for all $z\in \T$, $\tilde g\in G_\sigma, h\in G$ with $g=q(\tilde g)$. Define a $\kc{\A}$-valued inner product on $\X_c(\A)$ by
\begin{equation}\label{eq-rinner}
\braket{\xi}{\eta}_{\kc{\A}}(g,h)= \int_{G} \xi(\tir, g)^*\eta(\tir, h)\dd r.
\end{equation}
with $r=q(\tir)$. As before, one checks that the integrand only depends on $r=q(\tir)\in G$, so the integral makes sense.
On the other side, we have a  $\kc{\A_\sigma}$-valued left inner product on $\X_c(\A)$  given by the formula
$${_{\kc{\A_\sigma}}\braket{\xi}{\eta}}(\tig, \tih)=\int_G \xi(\tig, r)\eta(\tih, r)^*\dd r.$$
One easily checks that ${_{\kc{\A_\sigma}}\lk \xi, \eta\rk}$ satisfies~\eqref{eq-relation}. Moreover, we have
left and right actions of $\kc{\A_\sigma}$ and $\kc{\A}$ on $\X_c(\A)$ 
given on kernels by
\begin{align*}
k\cdot\xi(\tig, t)&=\int_{G}k(\tig, \tih)\xi(\tih, t)\dd h\quad\text{and}\\
\xi\cdot l(\tig, t)&=\int_{G}\xi(\tig, r) l(r,t)\dd r.
\end{align*}
One checks that these formulas satisfy the usual algebraic compatibility conditions for a pre-equivalence bimodule, as for instance the relation $_{\kc{\A_\sigma}}\braket{\xi}{\eta}\cdot \zeta=\xi\cdot\braket{\eta}{\zeta}_{\kc\A}$ for all $\xi,\eta,\zeta\in \X_c(\A)$. Finally we define an action  $\gamma:G_\sigma\car \chi_c(\A)$ by 
\begin{equation}\label{eq-actionchicA}
( \gamma_{\tig}\xi)(\tih, r)={\Delta(g)}\xi(\tih\tig, rg)\quad \text{with $g=q(\tig)$.}
\end{equation}
An easy computation shows that it is compatible with (the inflations to $G_\sigma$ of) the dual actions $\dual{\delta_{\A_\sigma}}$ on $\kc{\A_\sigma}$ on the left and the Green-twisted action $(\dual{\delta_\A},\iota^\sigma)$ on $\kk\A$ on the right.

\begin{proposition}\label{prop-iso-bimodules}
The $\kc{\A_\sigma}-\kc\A$ pre-equivalence bimodule $\X_c(\A)$ completes to give a 
$\dual{\delta_{\A_\sigma}}-(\dual{\delta_\A}, \iota^\sigma)$-equivariant $\kk{\A_\sigma}-\kk\A$-equivalence bimodule $(\X(\A), \gamma)$. 
 Moreover, the map
\begin{equation}\label{eq-iso-modules}
\Psi: C_0(G_\sigma,\iota)\odot \kc{\A}\to \X_c(\A);\quad \Psi(f\otimes k)(\tig, t):= f(\tig)k(q(\tig),t)
\end{equation}
extends to an $(\rt^{\sigma}\otimes \dual{\delta_\A})-\gamma$ equivariant
 isomorphism of right Hilbert $\kk{\A}$-modules $\L(G_\sigma,\kk{\A})\congto \X(\A)$ which then induces an isomorphism
of  weak $G\rtimes G$-algebras 
$(B_\sigma,\beta_\sigma,\phi_\sigma)\cong (\kk{\A_\sigma}, \dual{\delta_{\A_\sigma}}, j_{C_0(G)}^\sigma)$ 
for $(B,\beta,\phi):=(\kk\A, \widehat{\delta_\A}, j_{C_0(G)})$.
\end{proposition}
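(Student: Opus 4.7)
The plan is to verify the bimodule axioms for $\X_c(\A)$ by direct computation, establish positivity and completeness by transporting everything through a Borel section, and then check that $\Psi$ intertwines all the relevant structures on dense subspaces before extending by continuity. The identification of $(B_\sigma,\beta_\sigma,\phi_\sigma)$ with $(\kk{\A_\sigma},\dual{\delta_{\A_\sigma}},j^\sigma_{C_0(G)})$ will then come from passing to compact operators on both sides.

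First I would fix a Borel section $\s\colon G\to G_\sigma$ with $\s(e)=1$, as in Proposition~\ref{prop-Fellisoom}, and define a bijection
\[
\Theta\colon \X_c(\A)\to \kb{\A},\qquad \Theta(\xi)(g,t):=\xi(\s(g),t).
\]
An argument parallel to the one in the proof of Proposition~\ref{prop-Fellisoom} shows that $\Theta$ intertwines the right $\kc\A$-action and the $\kc\A$-valued right inner product on $\X_c(\A)$ with the obvious right multiplication and the inner product $\lk k,l\rk_{\kc\A}=k^**l$ on $\kb\A$. This gives automatically the positivity of $\lk\cdot,\cdot\rk_{\kc\A}$ and shows that the completion of $\X_c(\A)$ in the resulting norm is, as a right Hilbert $\kk\A$-module, isomorphic to the standard right Hilbert $\kk\A$-module $\kk\A_{\kk\A}$; in particular the right inner product is full. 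At the same time, a short computation using~\eqref{eq-relation} shows that $\Theta$ intertwines the left action of $\kc{\A_\sigma}$ on $\X_c(\A)$ with the left action of $\kk{\A_\sigma}$ on $\kb\A$ induced by the isomorphism $\Phi$ from~\eqref{eq:iso-kernels-algebras}. Consequently $\X(\A)\cong\kk\A$ as a $\Phi$-twisted $\kk{\A_\sigma}$-$\kk\A$-equivalence bimodule, and in particular it \emph{is} an equivalence bimodule; the required compatibility $_{\kc{\A_\sigma}}\lk\xi,\eta\rk\cdot\zeta=\xi\cdot\lk\eta,\zeta\rk_{\kc\A}$ is a routine Fubini computation once one notes that all integrands are well-defined on $G$ thanks to~\eqref{eq-relation}.

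Next I would verify equivariance of $\gamma$. Using~\eqref{eq-actionchicA} and the explicit formulas~\eqref{eq-hatdelta} and~\eqref{eq-convinvFell} for the dual actions on $\kc{\A}$ and $\kc{\A_\sigma}$, one checks directly that $\gamma$ intertwines $\dual{\delta_{\A_\sigma}}$ on the left with the Green-twisted dual action $(\dual{\delta_\A},\iota^\sigma)$ on the right; the cocycle factor $\iota^\sigma$ appears exactly because $\gamma$ is a $G_\sigma$-action (not a $G$-action) whose restriction to $\T\subseteq G_\sigma$ acts by scalars on $\X_c(\A)$ via $z\cdot \xi=\bar z\xi$.

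For $\Psi$, observe first that $\Psi(f\otimes k)$ satisfies the defining conditions in $\X_c(\A)$ because of~\eqref{eq-line-bundle-sections} and $k(g,t)\in A_{gt^{-1}}$, and that $\Psi$ is $C_0(G)$-balanced by virtue of~\eqref{eq-jGk} and the pointwise multiplication by $C_0(G)$ on $C_0(G_\sigma,\iota)$. A direct calculation gives $\lk\Psi(f\otimes k),\Psi(f'\otimes k')\rk_{\kc\A}=\lk f\otimes k,f'\otimes k'\rk_{\kk\A}$, so $\Psi$ is isometric. Density of the image in $\X_c(\A)$ for the inductive-limit topology (and hence for the module norm) follows from a partition-of-unity argument using continuous local sections of $q\colon G_\sigma\onto G$, exactly as in the proof of Lemma~\ref{lem-crosssectional}; thus $\Psi$ extends to the desired isomorphism $\L(G_\sigma,\kk\A)\cong\X(\A)$ of right Hilbert $\kk\A$-modules. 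Equivariance $\Psi\circ(\rt^\sigma\otimes\dual{\delta_\A})_{\tig}=\gamma_{\tig}\circ\Psi$ reduces to comparing~\eqref{eq-iso-modules} and~\eqref{eq-actionchicA}. Finally, since $\Psi$ is an equivariant isomorphism of full right Hilbert $\kk\A$-modules, it induces an isomorphism $B_\sigma=\K(\L(G_\sigma,\kk\A))\cong\K(\X(\A))=\kk{\A_\sigma}$ intertwining $\beta_\sigma$ with $\dual{\delta_{\A_\sigma}}$; the left action of $C_0(G)$ on $C_0(G_\sigma,\iota)$ corresponds via $\Psi$ to the standard left $C_0(G)$-action on $\X_c(\A)$ via $j^\sigma_{C_0(G)}$, which identifies $\phi_\sigma$ with $j^\sigma_{C_0(G)}$ and completes the proof.

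The main obstacle here is really a bookkeeping one: keeping straight the two different $\T$-equivariance conventions (the one defining $\X_c(\A)$ via $\xi(z\tig,\cdot)=\bar z\xi(\tig,\cdot)$ and the one defining $C_0(G_\sigma,\iota)$) and ensuring that $\gamma$ is genuinely a $G_\sigma$-action descending compatibly to $G$ on the compacts. All genuinely analytic content (positivity, completeness, fullness) is handled in one stroke by the comparison $\Theta$ with $\kk\A$, which is why choosing a Borel section at the start pays off even though the final statement is section-independent.
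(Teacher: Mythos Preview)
Your proposal is correct and follows essentially the same approach as the paper: both arguments establish the bimodule structure by transporting $\X_c(\A)$ to $\kk\A$ via a Borel section and invoking the isomorphism $\Phi$ of Proposition~\ref{prop-Fellisoom}, and both verify surjectivity of $\Psi$ with the partition-of-unity argument from Lemma~\ref{lem-crosssectional}. The only difference is organizational: the paper first uses $\Psi$ to establish the right Hilbert module structure and then invokes the section-dependent map $\Theta$ (realized as $\Theta\circ\Psi^{-1}$) to show that the left $\kc{\A_\sigma}$-action extends faithfully to $\kk{\A_\sigma}$, whereas you front-load the section-dependent comparison $\Theta\colon \X_c(\A)\to\kb\A$ to handle the full bimodule structure at once and only afterwards verify that $\Psi$ is the section-free description.
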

\begin{proof} We use the second assertion for the proof of the first. 
Indeed, it is  straightforward to check that $\Psi$ preserves the right inner product and actions
 and an argument as in the proof of Lemma~\ref{lem-crosssectional}, using continuous local sections and partitions of the unit, shows that it is also surjective. 
 This implies that $\X_c(\A)$ is a  right pre-Hilbert $\kk\A$-module and that $\Psi$ extends to an isomorphism $\Psi: \L(G_\sigma,\kk\A)\congto \X(\A)$ of right Hilbert  $\kk\A$-modules. 
 By the compatibility conditions of the pairings for $\X_c(\A)$ we further see that $B_\sigma:=\kk\A_\sigma=\K(\L(G_\sigma,\kk\A))\cong\K(\X(\A))$ is a \cstar{}completion of $\kc{\A_\sigma}$. We need to show that it coincides with $\kk{\A_\sigma}$. 
For this it suffices to show that the left action of $\kc{\A_\sigma}$ on $\X_c(\A)$ extends faithfully  to a left action of $\kk{\A_\sigma}$ on $\X(\A)$.
For this we choose a Borel section $\s:G\to G_\sigma$ and recall the isomorphism $\Theta: \L(G_\sigma, \kk\A)\congto \kk\A$ as right Hilbert $\kk\A$-modules 
 as in~\eqref{eq:isomorphism-L(G,B)=B}. Then the 
the composition of isomorphisms
$$\X(\A)\stackrel{\Psi^{-1}}\congto\L(G_\sigma,\kk{\A})\stackrel{\Theta}\congto \kk{\A}$$
sends a function $\xi\in \X_c(\A)$ to the function $\tilde\xi\in \kc{\A}$ 
given by $\tilde\xi(g,h):=\xi(\s(g),h)$. A simple computation then shows that
$$k\cdot \xi =\Phi(k)\cdot \tilde\xi,\quad \forall k\in \kc{\A_\sigma}, \xi\in \X_c(\A),$$
where $\Phi:\kk{\A_\sigma}\congto \kk\A$ is the isomorphism of~\eqref{eq:iso-kernels-algebras}.
This proves that the left action is bounded and extends faithfully to a left action of $\kk{\A_\sigma}$ on  $\X(\A)$. To complete the proof, we 
need to check that  $\Psi$ is $\gamma-\rt^\sigma\otimes\widehat{\delta_\A}$ equivariant and that it intertwines the left structure maps 
$j_{C_0(G)}^\sigma:C_0(G)\to \Lb_{\kk\A}(\chi(\A))$ with the left action  $\phi_\sigma:C_0(G)\to \Lb_{\kk\A}(\L(G_\sigma,\kk\A)$ induced  from the 
left action of $C_0(G)$ on $C_0(G_\sigma,\iota)$ by pointwise multiplication.  We  do the first and leave the second to the reader:
for all $f\in C_0(G_\sigma)$ and $k\in \kc\A$ we compute
\begin{align*}
\Psi\Big((\rt^{\sigma}\otimes\dual{\delta_\A})_{\tig}(f\otimes k)\Big)(\tih, r)&=
\Psi\big(\rt^\sigma_{\tig}(f)\otimes \dual{\delta_\A}_{\tig}(k)\big)(\tih,r)\\
&=f(\tih\tig)\Delta(g)k(hg,rg)\\
&=\Delta(g)(f(\tih\tig)k(hg, rg))\\
&=\gamma_{\tig}\big(\Psi(f\otimes k)\big)(\tih,r)
\end{align*}
for all $\tig,\tih\in G_\sigma$, $r\in G$ such that $g=q(\tig), h=q(\tih)$. 
This  finishes the proof.
\end{proof}

\section{Continuous fields}\label{sec-cont}

In this section we  study  continuity properties of deformation of cross-sectional algebras of Fell bundles with respect to a continuous family of twists.
Our results extend  results of Raeburn in \cite{Raeburn:Deformations} who considered deformation of Fell bundles by continuous families of circle valued $2$-cocycles of discrete amenable groups $G$. Recall from \cite{BE:deformation} that by a {\em continuous family} of twists of $G$ over a locally compact Hausdorff space $X$,  or simply a {\em twist over} $X\times G$,
 we  understand a groupoid central extension
\begin{equation}\label{eq-groupoid}
\Sigma:=(X\times \T\stackrel{\iota}{\into} \G\stackrel{q}\onto X\times G)
\end{equation}
of the trivial group bundle $X\times G$ by the central  trivial group bundle $X\times \T$. Then for each $x\in X$, we obtain a central extension $\sigma_x:=(\T\into G_{\sigma_x}\onto G)$ of $G$ by $\T$. For ease of notation, we shall denote the elements in $\G$ generally by $\tilde g$, and by $(x,\tilde g)$ to indicate that $\tilde g$ lies in the fibre $G_{\sigma_x}=q^{-1}(\{x\}\times G)\sbe \G$ over $x\in X$, and we shall write $(x, g)$ for its image in $X\times G$ under the quotient map. 

As explained in \cite{BE:deformation}, continuous families of twists of $G$ over $X$ are closely related to $\contz(X)$-linear actions on continuous trace \cstar{}algebras. More precisely, the following construction provides natural examples of continuous families of twists:

\begin{example}\label{eq-action}
Let $\K=\K(\H)$ for a Hilbert space $\H$ and suppose that $\alpha:G\car C_0(X,\K)$ is a $C_0(X)$-linear (i.e.~ fibre-wise) action of $G$ on $C_0(X,\K)$ as considered in \cite{BE:deformation}*{Definition~4.23}.
For all $x\in X$ let $\alpha^x:G\car \K$ denote the action on the fibre at $x$.
Then $$\Sigma_{\alpha}:=\{(x,g,v)\in X\times G\times \U(\H): \alpha_g^x=\Ad v\}$$
together with  the embedding 
$X\times \T\into \G_\alpha; (x,z)\mapsto (x,e, \bar{z}1_{\H})$ and the quotient map $\G_\alpha\onto X\times G; 
(x,g,v)\mapsto (x,g)$ defines a groupoid central extension
$$\Sigma_\alpha=(X\times \T\into \G_\alpha\onto X\times G)$$
as above. We refer to \cite{BE:deformation}*{Lemma~4.28} for further details of this construction. 
\end{example}

\begin{remark}\label{rem-L2Sigma}
We also need to recall that, conversely, every twist $\Sigma=(X\times \T\stackrel{\iota}{\into} \G\stackrel{q}\onto X\times G)$ gives rise to 
a fibre-wise action of $G$ on a continuous field of compact operators  over $X$.
If $G$ and $X$ are second countable, this precisely inverts the construction of the above example.
For the construction let $C_c(\G,\iota)$ denote the space of compactly supported continuous functions $\xi$ on $\G$ which satisfy the relation
\begin{equation}\label{eq-relationSigma}
\xi(x, \tig z)=\bar{z}\xi(x,\tig)\quad\forall (x,\tig)\in \G, z\in \T.
\end{equation}
We define a $C_0(X)$-valued inner product on $C_c(\G,\iota)$ by
\begin{equation}\label{eq-C0(X)innerproduct}
\braket{\xi}{\eta}_{C_0(X)}(x)=\int_G \overline{\xi(x,\tig)}\eta(x,\tig)\dd g
\end{equation}
and we let $L^2(\G,\iota)$ denote the completion of $C_c(\G,\iota)$ with respect to this inner product. 
Then the right translation action $\rho_\G:\G\car L^2(\G, \iota)$ is given fibre-wise 
by the right translation action $\rho^{\sigma_x}:G\car L^2(G_{\sigma_x}, \iota)$ given by  $(\rho^{\sigma_x}_\tig\xi)(\tih)=\sqrt{\Delta(g)}\xi(\tih\tig)$
for $\tig,\tih\in G_{\sigma_x}$ and $g=q_x(\tig)$.  The adjoint action $\Ad\rho_\G$ then provides a $C_0(X)$-linear action 
$\alpha:G\car \K(L^2(\G,\iota))$ with fibre actions $\alpha^x=\Ad\rho^{\sigma_x}:G\car \K(L^2(G_{\sigma_x}, \iota))$.
It has been shown in  \cite{BE:deformation}*{Lemma 4.33} that if $\Sigma=\Sigma_{\alpha'}$ for some 
continuous family of actions $\alpha':G\car C_0(X,\K)$ as in the above example, then $( \K(L^2(\G,\iota)), \alpha)$ and $(C_0(X,\K),\alpha')$
are $X\times G$-equivariantly Morita equivalent.
\end{remark}

Another source of examples for continuous families of twists is given via continuous families of $2$-cocycles:

\begin{example}\label{eq-cocycles} Suppose $G$ is a second countable locally compact group.  
By a continuous family of Borel $2$-cocycles $x\mapsto \om_x$ over the second countable locally compact space $X$ we understand a 
Borel $2$-cocycle $\Omega\colon G\times G\to C(X,\T)$, where the trivial $G$-module $C(X,\T)$ is equipped with the 
topology of uniform convergence on compact subsets of $X$, such that $\om_x:=\Om(\cdot,\cdot)(x)\in Z^2(G,\T)$ for all $x\in X$. 
It follows then from \cite{HORR}*{Proposition 3.1} together with \cite{CKRW}*{Theorem 5.1(3)} that $\Omega$ induces 
a $C_0(X)$-linear action $\alpha:G\car C_0(X,\K(L^2(G))$ by defining $\alpha^x:=\Ad \rho^{\bar\om_x}$ for all $x\in X$,
where $\rho^{\bar\om_x} :G\to \U(L^2(G))$ denotes the $\bar\om_x$-right regular representation 
of $G$ (see \cite{BE:deformation}*{Remark 4.19}).
By the construction in Example \ref{eq-action} we then obtain a twist $\Sigma_{\Omega}:=\Sigma_\alpha$
and it follows from  \cite{BE:deformation}*{Theorem 4.14.}  that for all $x\in X$ the twist $\sigma_x=(\T\into G_x\onto G)$ at $x$ is 
isomorphic to the twist $\sigma_{\om_x}=(\T\into G_{\om_x}\onto G)$ corresponding to the cocycle $\om_x$.

Notice that if $\Om:G\times G\to C(X,\T)$ is a {\em continuous} cocycle (e.g., if $G$ is discrete), 
then $\Sigma_{\Om}$ can be constructed directly 
as  $\Sigma_{\Om}:=(X\times\T\into \G_{\Om}\onto X\times G)$ where  $\G_{\Om}=G\times X\times \T$ is equipped  with product topology,   multiplication defined by
$(g_1, x, z_1)(g_2, x, z_2)=(g_1g_2, x, \Om(x)(g_1, g_2) z_1z_2)$, and the obvious inclusion of $X\times \T$ and projection onto $X\times G$. 
This construction does not need any second countability assumptions. If $\Om$ is not continuous, it is not clear to us how to topologize $G\times X\times \T$ in this construction.
\end{example}

A third class of important examples comes from central group extensions. The following example is \cite{BE:deformation}*{Proposition 4.35}.

\begin{example}\label{ex-extension}
Let $Z\stackrel{\iota_Z}{\into} H\stackrel{q_H}{\onto} G$ be a central extension of $G$ by the abelian group $Z$.
Then $Z$ acts freely and properly on the product space $\widehat{Z}\times H\times \T$ by
$$z(\chi, h, w):= (\chi, zh, \chi(z)w)\quad\forall z\in Z, (\chi, h,w)\in \widehat{Z}\times H\times  \T$$
and  there is a twist 
$$\Sigma_H:=(\widehat{Z}\stackrel{\iota}{\times} \T\into \G_H\stackrel{q}{\onto} \widehat{Z}\times G)$$
with $\G_H:=(\widehat{Z}\times H\times  \T)/Z$ and  inclusion and quotient maps are given by
$$\iota:(\chi, w)\mapsto [\chi, e, w]\quad{and}\quad q:[\chi, h, w]\mapsto (\chi, q_H(h)).$$
Here $e$ denotes the neutral element of $H$.

For each $\chi\in \widehat{Z}$ the fibre $\sigma_\chi=(\T\stackrel{\iota_\chi}\into G_\chi\stackrel{q_\chi}\onto G)$ at $\chi$ is then 
given by $G_\chi:=(H\times \T)/Z$ with respect to the action $z(h,w)=(zh, \chi(z)w)$ for $z\in Z, (h,w)\in H\times \T$.
The inclusion and quotient maps are given by
$\iota_\chi:w\mapsto [e,w]$ and  $q_\chi:[h,w]\mapsto q_H(h)$, respectively.
\end{example}

\begin{notation}\label{not-smooth}
   If the locally compact group $G$ admits a central extension $Z\stackrel{\iota_Z}{\into} H\stackrel{q_H}{\onto} G$ as above such that 
   the {\em transgression map} $\tg:\widehat{Z}\to \Twist(G); \chi\mapsto [\sigma_\chi]$ is bijective (hence an isomorphism of groups), we
   call $Z\stackrel{\iota_Z}{\into} H\stackrel{q_H}{\onto} G$ a {\em representation group} for $G$. 
   We say $G$ is {\em smooth}, if such a representation group exists.
   \end{notation}

  This notation was introduced by Calvin Moore in \cite{MooreII} in case of second countable locally compact groups, where he uses a Borel cross-section 
   $\s:G\to H$ in order to define a transgression map $\tg:\widehat{Z}\to H^2(G,\T)$. In that case, the transgression map of Moore can be obtained from ours by composing with the isomorphism $\Twist(G)\cong H^2(G,\T)$. Note that our construction does not need a Borel section, which may not exist if $G$ is not second countable. Notice that  many (but by far not all) groups are smooth in the above sense, 
   among them all discrete groups,  all semisimple Lie groups, the group $\R^n$, and many more.
   We refer to \cite{BE:deformation}*{Section 4} for a more detailed discussion and for concrete examples.

\subsection{Deformation via continuous families of twists}\label{subsec-continuousdeformation}
Suppose  that $(B, \beta, \phi)$ is a weak $G\rtimes G$-algebra and that $\rtimes_\mu$ is a duality crossed-product functor for $G$. 
Given a twist $\Sigma=(X\times \T\into \G\onto X\times G)$  over $X\times G$ with fibres $\sigma_x=(\T\into G_x\onto G)$,  
our general deformation process of  \cite{BE:deformation}*{Section 6} (see  Section~\ref{sec-Boreltwist} above) provides for all $x\in X$  the 
 weak $G\rtimes G$-algebras $(B_{\sigma_x},\beta_{\sigma_x},\phi_{\sigma_x})$ and then, via Landstad duality, the cosystems 
$(A^{\sigma_x}_\mu,\delta^{\sigma_x}_\mu)$. 
In \cite{BE:deformation}*{Theorem 6.16} we prove that the $(B_{\sigma_x},\beta_{\sigma_x},\phi_{\sigma_x})$ are  fibres of a continuous field
$(\B_\Sigma,\beta_\Sigma,\Phi_\Sigma)$
 of weak $G\rtimes G$-algebras over $X$, and from this we obtain, depending on certain properties of the given crossed-product functor $\rtimes_\mu$,
 continuity properties of the field of coactions $X\ni x\mapsto (A^{\sigma_x}_\mu,\delta^{\sigma_x}_\mu)$ over $X$.

We need to explain in more detail how $(\B_\Sigma,\beta_\Sigma,\Phi_\Sigma)$ is constructed and in which way it can be regarded as a continuous field of weak $G\rtimes G$-algebras. 
For this we consider the function space $\contz(\G,\iota)$ consisting of $\contz$-functions $\xi\colon \G\to \C$ satisfying~\eqref{eq-relationSigma}. 
Then $\contz(\G,\iota)$ becomes an imprimitivity Hilbert bimodule over $\contz(X\times G)$ with respect to the  inner products
\begin{equation}\label{eq-innerXxG}
{_{\contz(X\times G)}}\braket{\xi}{\eta}(x, g)=\xi(x,\tilde g)\overline{\eta(x,\tilde g)}\; \text{and}\; \braket{\xi}{\eta}_{\contz(X\times G)}(x, g)=\overline{\xi(x,\tilde g)}\eta(x,\tilde g)
\end{equation}
We then consider $\B:=\contz(X,B)$, viewed as a $\contz(X)$-algebra with constant fibre $B$. 
We define the (balanced) tensor product of Hilbert modules:
\begin{equation}\label{eq:mod-Fell-bundle}
    \E_\Sigma(\G,\B):=\big(\contz(\G,\iota)\otimes_{\contz(X\times G)}\B\big)\otimes_{\contz(X)}L^2(\G,\iota)^*,
\end{equation}
where $L^2(\G,\iota)^*$ denotes the $C_0(X)-\K(L^2(\G,\iota))$ equivalence bimodule dual to the right $C_0(X)$-Hilbert module  $L^2(\G,\iota)$  as constructed in Remark \ref{rem-L2Sigma}. 
It is shown in \cite{BE:deformation} (see the discussion preceding \cite{BE:deformation}*{Theorem 6.16}) that the 
$\G$-action
$$\gamma_{\G}:=\rt_{\G}\otimes_{C_0(X\times G)}\beta\otimes_{C_0(X\times G)} {\rho_\G}^*:\G\car  \E_\Sigma(C_0(X,B))$$
 for the right translation action  $\rt_\G:\G\car C_0(\G,\iota)$ and action $\rho_\G:\G\car L^2(\G,\iota)$ as in 
Remark \ref{rem-L2Sigma},  is trivial on $X\times \T$ and therefore factors through  a well-defined  
$C_0(X)$-linear action $\gamma_\Sigma:G\car \E_\Sigma(C_0(X,B))$.  Moreover, we denote by
$\Phi_\Sigma:C_0(X\times G)\to\M(\K(\E_\Sigma(\G, B)))$ the nondegenerate \Star{}homomorphism induced from the  left action  of $C_0(X\times G)$ on $C_0(\G,\iota)$.
We then define
$$(\B_\Sigma, \beta_\Sigma,\Phi_\Sigma):=(\K(\E_\Sigma(\G,\B)) ,\Ad \gamma_\Sigma, \Phi_\Sigma).$$
It is then easy to check that for all $x\in X$ the quotient maps $q_x:\B_\Sigma\to B_{\sigma_x}$ induce surjective morphisms of weak $G\rtimes G$-algebras
 $$q_x: (\B_\Sigma, \beta_\Sigma,\Phi_\Sigma)\onto (B_{\sigma_x},\beta_{\sigma_x}, \phi_{\sigma_x}).$$
Alternatively, we can also consider the right Hilbert $\B$-module
\begin{equation}\label{eq:mod-Fell-bundle1}
\L(\G,\B):=\contz(\G,\iota)\otimes_{\contz(X\times G)}\B
\end{equation}
equipped with the diagonal action $\epsilon_\Sigma:=\rt_{\G}\otimes_{C_0(X\times G)}\beta$ of  $\G$.
Since $L^2(\G,\iota)$ is an imprimitivity bimodule, we also have 
$$(\B_\Sigma, \beta_\Sigma,\Phi_\Sigma)\cong(\K(\L_\Sigma(\G,\B)) ,\Ad \epsilon_\Sigma, \Psi_\Sigma),$$
where, similar to $\Phi_\Sigma$, the \Star{}homomorphism $\Psi_\Sigma:C_0(X\times G)\to \M(\K(\L_\Sigma(\G,B)))$ is also induced from the left action of 
$C_0(X\times G)$ on $C_0(\G,\iota)$. 
We refer to  \cite{BE:deformation}*{Theorem~6.16} for more details.

\subsection{Deformation of Fell bundles via continuous families of twists}
We want to apply the above deformation procedure to the dual weak $G\rtimes G$-algebra 
$(B, \beta,\phi)=(C^*(\A)\rtimes_{\delta_\A}\dual G, \dual{\delta_\A}, j_{C_0(G)})$  of a Fell bundle $\A$ over $G$ 
and we want to show that, similar to deformation by a twist $\sigma=(\T\into G\onto G)$ as considered in the previous section, the construction of 
$(\B_\Sigma, \beta_\Sigma, \Phi_\Sigma)$ can then be done completely on the level of Fell bundles.
To explain this, given a Fell bundle $p:\A\to G$ and a twist $\Sigma=(X\times\T\into \G\onto X\times G)$, we 
construct a Fell bundle $\A_\Sigma$ over $X\times G$ by 
\begin{equation}\label{eq-ASigma}
\A_{\Sigma}=\big(\A\times_{(X\times G)}\G\big)/\sim
\end{equation}
where $\A\times_{(X\times G)}\G=\{(a, (x,\tig))\in \A\times \G: p(a)=q_x(\tig)\}$ and where $\sim$ denotes the equivalence relation
\begin{equation}\label{eq-sim}
(a, (x,\tig))\sim (b, (y,\tih))\Longleftrightarrow x=y\;\text{and}\; \exists z\in \T\;\text{such that}\; (b, (x,\tih))=(\bar{z}a, (x, z\tig)).
\end{equation}
The projection $p_\Sigma:\A_\Sigma\to X\times G$ is given by $p_\Sigma([a, (x,\tig)])=(x, g)$ if $g=q(\tig)$.
Observe that the restriction $\A_\Sigma|_{\{x\}\times G_{\sigma_x}}$ coincides with the  deformed Fell bundle $\A_{\sigma_x}$ 
with respect to the fibre $\sigma_x=(\T\into G_{\sigma_x}\onto G)$ at $x\in X$ as defined in the previous section.
As usual we write $C_c(\A_\Sigma)$ for the space of continuous sections $a:X\times G\to \A_\Sigma$ with compact supports.
Note that it becomes a \Star{}algebra with respect to the convolution
$$a*b(x,g)=\int_G a(x,h)b(x,h^{-1}g)\, \dd h\quad\text{and}\quad a^*(x,g)=\Delta(g^{-1})a_{(x,g^{-1})}^*$$
for $a,b\in C_c(\A_\Sigma)$.
The following lemma is then a complete analogue of Lemma \ref{lem-crosssectional} above and we omit the proof:

\begin{lemma}\label{lem-crossectionalSigma}
There is a bijection between the elements of $C_c(\A_\Sigma)$ and the set of compactly supported continuous functions
$a:\G\to \A$ satisfying
\begin{equation}\label{eq-relationSigma1}
a_{(x,z\tig)}=\bar{z}a_{(x,\tig)}\quad\forall (x,\tig)\in \G, z\in \T.
\end{equation}
Under this identification, convolution and involution are given by the formulas
\begin{equation}\label{eq-conv-invASigma}
a*b(x, \tig)=\int_G a(x,\tih)b(x,\tih^{-1}\tig)\, \dd h\quad\text{and}\quad a^*(x,\tig)=\Delta(g^{-1})a(x,\tig^{-1})^*
\end{equation}
where, as usual, we write $h=q_x(\tih)$, $g=q_x(\tig)$ for $q_x:G_{\sigma_x}\to G$.
\end{lemma}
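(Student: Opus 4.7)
The plan is to follow the same template used for Lemma \ref{lem-crosssectional}, extending each construction fibrewise over $X$. First I would define the candidate bijection: given $\tilde a\in C_c(\A_\Sigma)$, for each $(x,g)\in X\times G$ and each choice of $\tig\in G_{\sigma_x}$ with $q_x(\tig)=g$, the fibre element $\tilde a(x,g)\in A_{\Sigma,(x,g)}$ has, by the definition of $\A_\Sigma$ via the equivalence relation~\eqref{eq-sim}, a unique representative of the form $[a(x,\tig),(x,\tig)]$ with $a(x,\tig)\in A_g$. This produces a function $a\colon \G\to\A$, and the relation~\eqref{eq-sim} forces exactly $a(x,z\tig)=\bar z a(x,\tig)$, i.e. condition~\eqref{eq-relationSigma1}. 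Conversely, any $a\colon \G\to\A$ satisfying~\eqref{eq-relationSigma1} yields a well-defined section $\tilde a(x,g):=[a(x,\tig),(x,\tig)]$ independent of the choice of $\tig$, by the same equivalence.

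Next I would transfer the topological properties. Fibrewise this is nothing new: the arguments used for Lemma~\ref{lem:eq-omsections} and Lemma~\ref{lem-crosssectional} (applied to the central extension $\T\into G_{\sigma_x}\onto G$) work verbatim on each slice $\{x\}\times G_{\sigma_x}\to G_{\sigma_x}$. Globally, the key input is that the quotient map $\G\onto X\times G$ is open (locally a principal $\T$-bundle, since the central $X\times\T$-action is free and proper). With this, the net-chasing argument of Lemma~\ref{lem:eq-omsections} carries through to show that $\tilde a$ is continuous iff $a$ is continuous, and the support identity $\supp a=q^{-1}(\supp\tilde a)$ together with compactness of $\T$ gives the compact-support equivalence. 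No new ingredient is required beyond what is already used for a single twist.

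Finally I would verify that the bijection intertwines the algebraic operations. By definition, the product in $\A_\Sigma$ is induced from the product in $\A\times_{X\times G}\G$: $[a,(x,\tig)]\cdot[b,(x,\tih)]=[ab,(x,\tig\tih)]$ and $[a,(x,\tig)]^*=[a^*,(x,\tig^{-1})]$. Substituting into the convolution $(\tilde a*\tilde b)(x,g)=\int_G \tilde a(x,h)\tilde b(x,h^{-1}g)\dd h$ and writing everything through representatives yields, exactly as in the computation of Lemma~\ref{lem-crosssectional},
\[
(\tilde a*\tilde b)(x,g)=\bigl[\textstyle\int_G a(x,\tih)b(x,\tih^{-1}\tig)\dd h,\,(x,\tig)\bigr],
\]
and the integrand only depends on $h=q_x(\tih)$ thanks to~\eqref{eq-relationSigma1}, so that the integral makes sense. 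The involution formula $a^*(x,\tig)=\Delta(g^{-1})a(x,\tig^{-1})^*$ is checked similarly.

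The main potential obstacle is the continuity direction ``$a$ continuous $\Rightarrow$ $\tilde a$ continuous'', where one needs to promote a net $(x_i,g_i)\to(x,g)$ in $X\times G$ to a convergent net of lifts in $\G$. This is exactly where openness of $q\colon \G\onto X\times G$ (together with compactness of $\T$ to extract a convergent subnet of the resulting scalar correction) is used, precisely as in the proof of Lemma~\ref{lem:eq-omsections}. Since everything else is a routine fibre-by-fibre adaptation, the proof can indeed be omitted.
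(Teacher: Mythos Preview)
Your proposal is correct and matches the paper's approach exactly: the paper omits the proof, stating only that it is a complete analogue of Lemma~\ref{lem-crosssectional}, and what you have written is precisely that analogue carried out fibrewise over~$X$ with the same net-chasing argument from Lemma~\ref{lem:eq-omsections} for continuity.
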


There are constructions of full and reduced cross-sectional algebras of Fell bundles over groupoids, but 
in our situation these constructions can be reduced to the situation of Fell bundles over the group $G$ by
associating to  $\A_\Sigma$  a Fell bundle, say $\widetilde{\A}_\Sigma$ over $G$ as follows:
the  fibres  $\tilde{A}_g$ are given as the  $C_0$-sections  $C_0(\A_\Sigma|_{X\times \{g\}})$ of the restriction  $\A_\Sigma|_{X\times \{g\}}$ of $\A_\Sigma$ to $X\times \{g\}$
and multiplication and involution are defined pointwise over $X$.  

\begin{notation}\label{not-cross-sectional}
Let $\A_\Sigma$ and $\widetilde\A_\Sigma$ be as above and let $\rtimes_\mu$ be any duality crossed-product functor for $G$.
We then {\em write}
$C^*(\A_\Sigma):=C^*(\widetilde\A_\Sigma)$, and similarly $C_\mu^*(\A_\Sigma):=C^*_\mu(\widetilde{\A}_\Sigma)$. 
Then
 $C^*(\A_\Sigma)$ is equipped with a dual coaction  $\delta_{\A_\Sigma}: C^*(\A_\Sigma)\to \M(C^*(\A_\Sigma)\otimes C^*(G))$
 which factors through a coaction $\delta^\Sigma_\mu$ on $C_\mu^*(\A_\Sigma)$.
 \end{notation}

Note that there is an obvious inclusion 
$$C_c(\A_\Sigma)\into C_c(\widetilde{\A}_\Sigma); a \mapsto \big(g\mapsto a|_{X\times\{g\}})$$
 and one can show that this map induces an isomorphism of the usually defined full (or reduced) cross-sectional algebra $C^*(\A_\Sigma)$ (resp.~$C_r^*(\A_\Sigma)$) in the general setting of Fell bundles over groupoids and $C^*(\widetilde\A_\Sigma)$ (resp.~$C_r^*(\widetilde\A_\Sigma)$), so our definition makes sense; indeed the case of full cross-sectional \cstar{}algebras is a special case of \cite{Buss-Meyer:Groupoid_fibrations}*{Theorem~6.2} and the reduced case a special case of \cite{LaLonde}*{Proposition~5.1}. From now on, we shall simply identify $\A_\Sigma$ with the Fell bundle $\widetilde\A_\Sigma$ over $G$ whenever it seems convenient.
\medskip

The crossed product $C^*(\A_\Sigma)\rtimes_{\delta_{\A_\Sigma}}\dualG$ comes with the dual action $\dual{\delta_{\A_\Sigma}}$ of $G$ 
and the inclusion  $\Psi_G=j_{C_0(G)}:C_0(G)\to \M(C^*(\A_\Sigma)\rtimes_{\delta_{\A_\Sigma}}\dualG)$. We also have a
canonical nondegenerate \Star{}homomorphism $\Psi_X:C_0(X)\to Z\M(C^*(\A_\Sigma)\rtimes_{\delta_{\A_\Sigma}}\dualG)$
which is induced by pointwise multiplication $(\varphi a)(x,\tig)=\varphi(x)a(x,\tig)$ 
of functions $\varphi\in C_0(X)$ with sections  $a\in C_c(\A_{\Sigma})$. It is easily checked that it
 commutes with  $j_{C_0(G)}$. We therefore obtain a well-defined structure map
 \begin{equation}\label{eq-structue-map}
\Psi_{X\times G}:=\Psi_X\otimes \Psi_G:C_0(X\times G)\to \M(C^*(\A_\Sigma)\rtimes_{\delta_{\A_\Sigma}}\dualG).
  \end{equation}

\begin{theorem}\label{the:iso-continuous-deformation-Fell}
    Let  $(B, \beta,\phi)=(C^*(\A)\rtimes_{\delta_\A}\dual G, \dual{\delta_\A}, j_{C_0(G)})$ be the dual weak $G\rtimes G$-algebra
    for a Fell bundle $\A$ over $G$.
   Let $\Sigma=(X\times \T\into \G\onto X\times G)$ be a twist for $X\times G$ and let $(\B_\Sigma,\beta_\Sigma, \Phi_\Sigma)$ 
    be the deformation of $(B,\beta,\phi)$ by $\Sigma$ as in \S \ref{subsec-continuousdeformation}. 
    Then $(B_\Sigma,\beta_\Sigma, \Phi_\Sigma)$  is isomorphic to the triple 
    $$\big(C^*(\A_\Sigma)\rtimes_{\delta_{\A_\Sigma}}\dualG, \dual{\delta_{\A_\Sigma}},  \Psi_{X\times G}\big).$$
\end{theorem}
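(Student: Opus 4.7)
The approach is to generalise Proposition~\ref{prop-iso-bimodules} to the continuous-family setting. The section-based isomorphism of Proposition~\ref{prop-Fellisoom} is not available here, since no global Borel cross-section $\mathfrak S\colon X\times G \to \G$ need exist; the bimodule picture provides a section-free substitute.

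First, I would construct a continuous-family analogue of the bimodule $\X(\A)$. Let $\X_c(\A_\Sigma)$ denote the space of continuous compactly supported functions $\xi\colon \G\times_X(X\times G)\to \A$ satisfying $\xi((x,\tig),(x,t)) \in A_{gt^{-1}}$ and the $\T$-equivariance $\xi((x,z\tig),(x,t)) = \bar{z}\,\xi((x,\tig),(x,t))$ for $z\in\T$. Equip $\X_c(\A_\Sigma)$ with the $C_0(X)$-linear, fibrewise analogues of the $\kc\A$-valued right inner product, the left inner product, the bimodule actions, and the $\G$-action of \eqref{eq-actionchicA}, integrating throughout only over the $G$-coordinate of the groupoid $X\times G$. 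By construction, the fibre over each $x\in X$ reproduces exactly the bimodule $\X_c(\A)$ of Proposition~\ref{prop-iso-bimodules} applied to the twist $\sigma_x$.

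Second, I would assemble these fibres globally. The right inner product makes $\X_c(\A_\Sigma)$ a pre-Hilbert module over $\B = C_0(X,\kk\A)$, with completion $\X_\Sigma(\A)$. Via Notation~\ref{not-cross-sectional} and Proposition~\ref{prop-kernels} applied to $\widetilde{\A}_\Sigma$, the crossed product $C^*(\A_\Sigma)\rtimes_{\delta_{\A_\Sigma}}\dualG$ is identified with $\kk{\widetilde\A_\Sigma}$, the enveloping $C^*$-algebra of the $*$-algebra of kernels of $\widetilde{\A}_\Sigma$. Using this identification, the fibrewise faithful and nondegenerate left actions of $\kk{\A_{\sigma_x}}$ on the fibres (from Proposition~\ref{prop-iso-bimodules}) assemble, by a standard $C_0(X)$-algebra argument, into a faithful nondegenerate left action of $C^*(\A_\Sigma)\rtimes_{\delta_{\A_\Sigma}}\dualG$, turning $\X_\Sigma(\A)$ into a $\widehat{\delta_{\A_\Sigma}}$-equivariant equivalence bimodule between this crossed product and $\B$.

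Third, I would exhibit the required isomorphism $\L_\Sigma(\G,\B)\congto \X_\Sigma(\A)$ of right Hilbert $\B$-modules via the $C_0(X\times G)$-balanced map
\[
\Psi_\Sigma\colon C_c(\G,\iota)\odot \kc\A \to \X_c(\A_\Sigma),\qquad \Psi_\Sigma(f\otimes k)((x,\tig),t) := f(x,\tig)\,k(q_x(\tig), t).
\]
Arguing exactly as in the proof of Proposition~\ref{prop-iso-bimodules}, one checks that $\Psi_\Sigma$ preserves right inner products and right actions, hence descends to the balanced tensor product and extends isometrically to $\L_\Sigma(\G,\B)$. Surjectivity onto $\X_c(\A_\Sigma)$ is proved as in Lemma~\ref{lem-crosssectional}, by covering a given compact support by open sets admitting continuous local sections of $q\colon \G\to X\times G$ (guaranteed by Gleason's theorem) and patching via a subordinate partition of unity. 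Intertwining of the $\G$-actions $\rt_\G\otimes \widehat{\delta_\A}$ and $\gamma_\Sigma$, and of the left $C_0(X\times G)$-actions $\Phi_\Sigma$ and $\Psi_{X\times G}$, is verified by the same pointwise computation as in Proposition~\ref{prop-iso-bimodules}. Passing to compact operators yields the sought isomorphism $(\B_\Sigma,\beta_\Sigma,\Phi_\Sigma) \cong (C^*(\A_\Sigma)\rtimes_{\delta_{\A_\Sigma}}\dualG, \widehat{\delta_{\A_\Sigma}}, \Psi_{X\times G})$.

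The main obstacle will be the global assembly in step two: proving that the fibrewise left actions integrate to a genuine action of the full crossed product $C^*(\A_\Sigma)\rtimes_{\delta_{\A_\Sigma}}\dualG$, rather than to some exotic completion of the kernel $*$-algebra. This is resolved by the universal property of $\kk{\widetilde\A_\Sigma}$ as an enveloping $C^*$-algebra together with the $C_0(X)$-algebra structure, which transfers fibrewise faithfulness to global faithfulness in the usual way.
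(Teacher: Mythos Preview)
Your proposal is correct and follows essentially the same route as the paper's own proof: both construct the continuous-family analogue $\X_c(\A_X)$ of the single-twist bimodule, identify it with $\L(\G,\B)$ via the map $\Psi_\Sigma$, and then argue that the left $\kc{\A_\Sigma}$-action extends to $\kk{\A_\Sigma}$ by appealing to the $C_0(X)$-linear structure together with the known fibrewise result from Proposition~\ref{prop-iso-bimodules}. One ingredient you gloss over that the paper makes explicit: to run the ``standard $C_0(X)$-algebra argument'' you need to know that $\kk{\A_\Sigma}\cong C^*(\A_\Sigma)\rtimes_{\delta_{\A_\Sigma}}\dualG$ is itself a $C_0(X)$-algebra with fibres $\kk{\A_{\sigma_x}}$, which the paper deduces from the fact that full cross-sectional algebras preserve short exact sequences (so $C^*(\A_\Sigma)$ has fibres $C^*(\A_{\sigma_x})$) combined with Nilsen's theorem on crossed products by coactions of $C_0(X)$-algebras; also, in your definition of $\Psi_\Sigma$ you should tensor with $\kc{\A_X}$ rather than $\kc\A$, since $\B=C_0(X,\kk\A)=\kk{\A_X}$.
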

\begin{proof} 
Arguing similarly to the proof of Proposition \ref{prop-iso-bimodules}, we use the  descriptions of $B=C^*(\A)\rtimes_{\delta_\A}\dualG\cong \kk\A$ and 
$C^*(\A_\Sigma)\rtimes_{\delta_{\A_\Sigma}}\dualG\cong \kk{\A_\Sigma}$ in order to construct the desired isomorphism.
Let $\A_X=X\times \A$ be the pullback of $\A$ to $X\times G$ via the projection $X\times G\to G$ (this can be regarded as a special case of the construction of $\A_\Sigma$ for the 
trivial twist $X\times \T\into X\times (G\times \T)\onto X\times G$). We then have $C^*(\A_X)\cong C_0(X,C^*(\A))$ and
\begin{equation}\label{eq-AX}
\B:=C_0(X,B)= C_0(X,C^*(\A)\rtimes_{\delta_\A}\dualG) = C^*(\A_X)\rtimes_{\delta_{\A_X}}\dual G= \kk{\A_X}.
\end{equation}
We write $\kc{\A_\Sigma}$ for the compactly supported functions $k:\G\times_X\G\to \A$ satisfying
\begin{equation}\label{eq-relation-kcASigma}
k(x,z\tig,u\tih)=\bar{z}uk(x,\tig,\tih),\quad (x,\tig,\tih)\in \G\times_X\G,
\end{equation}
and observe that $\kc{\A_\Sigma}$ can be regarded as a  dense subalgebra of $\kk{\A_\Sigma}$ in a canonical way.

We follow  similar ideas as in the proof of Proposition \ref{prop-iso-bimodules}
and realize the right Hilbert $\B$-module $\L(\G,\B)$ of  \eqref{eq:mod-Fell-bundle1} as a completion of
the space $\X_c( \A_X)$ of all compactly supported continuous functions $\xi\colon \G\times G\to \A$ satisfying
$$\xi(x,z\tilde g,h)=\bar z\xi(x,\tilde g,h)\in A_{gh^{-1}}\mbox{ for all }(x,\tilde g)\in\G, h\in G.$$
We can mimic the formulas of the previous section and define inner products and left and right actions of $\kc{\A_\Sigma}$ and $\kc{\A_X}$ by the following formulas for $\xi,\eta\in \X_c(\A_X)$, $k\in \kc{\A_\Sigma}$ and $l\in \kc{\A_X}$:
$$\braket{\xi}{\eta}_{\kc{\A_X}}(x,s,t):=\int_{G_x}\xi(g,s)^*\eta(g,t)\dd g,$$
$$_{\kc{\A_\Sigma}}\braket{\xi}{\eta}(x,\tilde g,\tilde h):=\int_G\xi(x,\tilde g,t)\eta(x,\tilde h,t)^*\dd t,$$
$$(k\cdot\xi)(x,\tilde g,t):=\int_{G_x}k(x,\tilde g,\tilde h)\xi(x,\tilde h,t)\dd \tilde h,$$
$$(\xi\cdot l)(x,\tilde g,t):=\int_G\xi(x,\tilde g,h)l(h,t)\dd h.$$
The module $\L(\G,\B)$ is  $\contz(X)$-linear with fibres
$$\L(G_{\sigma_x},B)=\contz(G_{\sigma_x},\iota_x)\otimes_{\contz(G)}B,$$
where $\sigma_x=(\T\into G_{\sigma_x}\onto G)$ is the fibre of $\Sigma$ at $x\in X$.  
We  know from Proposition \ref{prop-iso-bimodules} 
that the algebra of compact operators of this module is isomorphic to $\kk{\A_{\sigma_x}}$, where $\A_{\sigma_x}$ is the Fell bundle deformed from $\A$ via $\sigma_x$.

We need to show  that  $\B_\Sigma=\K(\L(\G,\B))$ is isomorphic to $\kk{\A_\Sigma}$, and that this isomorphism intertwines the actions and structure maps.
Notice that $C^*(\A_\Sigma)$ is a $\contz(X)$-algebra with fibres $C^*(\A_{\sigma_x})$: this follows from the fact that full cross-sectional \cstar{}algebras of Fell bundles preserve exact sequences (see e.g. \cite{Exel:Book}*{Proposition~21.15} that proves this statement for Fell bundles over discrete groups; a similar proof applies for locally compact groups). 
 It follows then from \cite{Nilsen:Full}*{Theorem~4.3}  
that $\kk{\A_\Sigma}=C_0(\A_\Sigma)\rtimes_{\delta_{\A_\Sigma}}\dual G$ is also a $\contz(X)$-algebra with fibres
$$\kk{\A_{\sigma_x}}\cong C^*(\A_{\sigma_x})\rtimes_{\delta_x}\dualG.$$
Now, observe that we have a canonical map
$$\Psi_X\colon \contz(\G,\iota)\odot \kc{\A_X}\to \X_c(\A_X),\quad\Psi_X(f\otimes \xi)(x,\tilde g,h):=f(x,\tilde g)\xi(x, g,h).$$
As in the case of a single twist, one checks that this preserves the structures of right modules over $\kc{\A_X}$, so that $\Psi_X$ induces an isomorphism of right Hilbert $\kk{\A_X}$-modules 
$$\Psi_\Sigma\colon \L(\G,\B)=\contz(\G,\iota)\otimes_{\contz(X\times G)}\kc{\A_X}\congto \X(\A_X),$$
where $\X(\A_X)$ denotes the completion of $\X_c(\A_X)$ with respect to the right $\B$-module structure (recall that $\B=\kk{\A_X}$).
So, again, as in the case of a single twist, to show that $\kk{\A_\Sigma}=\B_\Sigma=\K(\L(\G,\B))$, it is enough to see that the left action of $\kc{\A_\Sigma}$ on $\X_c(\A_X)$ extends to a \Star{}homomorphism (i.e. a left action by adjointable operators)
$$\kk{\A_\Sigma}\to \Lb_{\B}(\L(\G,\B)).$$
But since both the algebra and the module involved carry $\contz(X)$-linear structures which are preserved by the left action of $\kc{\A_{\Sigma}}$ on 
$\X(\A_X)\cong  \L(\G,\B)$, 
 and we already know that the fibre-wise left action of $\kc{\A_{\sigma_x}}$ on the fibre $\L(G_{\sigma_x},B)$ of $\L(\G,\B)$ over $x$
  extends to an action by adjointable operators
$$\kk{\A_{\sigma_x}}\to\Lb_{B}(\L(G_x,B))$$
 the  result follows. 
 Finally notice that by Proposition  \ref{prop-iso-bimodules}  we know that the isomorphism $\kk{\A_{\sigma_x}}\to\Lb_{B}(\L(G_x,B))$ 
 intertwines the actions and structure maps and  therefore induces an isomorphism of weak $G\rtimes G$-algebras
 $$(B_{\sigma_x}, \beta_{\sigma_x}\phi_{\sigma_x})\cong (\kk{\A_{\sigma_x}}, \dual{\delta_{\A_{\sigma_x}}}, j_{C_0(G)}).$$
 Since the $C_0(X)$-linear actions and structure maps for $\B_\Sigma$ and $\kk{\A_\Sigma}$ induce these actions and structure maps on the fibres, 
 we can finally conclude the desired isomorphism  
 $$\big(B_\Sigma,\beta_\Sigma, \Phi_\Sigma\big)\cong\big(C^*(\A_\Sigma)\rtimes_{\delta_{\A_\Sigma}}\dualG, \dual{\delta_{\A_\Sigma}},  \Psi_{X\times G}\big).$$
 \end{proof}
 
 Recall that  $C^*(\A_\Sigma)$ 
  is a $C_0(X)$-algebra by extending pointwise multiplication 
 of functions in $C_0(X)$ with sections in $C_c(\A_\Sigma)$. The same holds true for $C^*_\mu(\A_\Sigma)$ for every  duality crossed-product 
 functor $\rtimes_\mu$. Recall that for any $C_0(X)$-algebra $D$, the (maximal) fibre $D_x$ of $D$ over $x$ is 
 defined as the quotient $D_x:=D/I_x$ with $I_x=C_0(X\setminus\{x\})D$. Thus we have the fibres $C_\mu^*(\A_\Sigma)_x$ 
 for each $x\in X$. 
 
On the other hand,   it is clear that evaluation at $x\in X$ induces $\delta^\Sigma_\mu-\delta^{\sigma_x}_\mu$ equivariant 
 quotient maps $Q_x:C^*_\mu(\A_\Sigma)\onto C^*_\mu(\A_{\sigma_x})$, and the obvious question arises, under 
 what conditions the $Q_x$ factor through isomorphisms $C_\mu^*(\A_\Sigma)_x \cong C^*_\mu(\A_{\sigma_x})$? By \cite{Dana:book}*{Theorem C.26},  this is equivalent to saying that $C_\mu^*(\A_\Sigma)$ is an upper semi-continuous bundle of \cstar{}algebras with fibres $C_\mu^*(\A_{\sigma^x})$. Indeed, as a direct  application of the above theorem together with \cite{BE:deformation}*{Theorem 6.16} we now get the following
 
 \begin{theorem}\label{thm-contdeform}
 Let $\A$ be a Fell bundle over $G$ and let $\Sigma=(X\times\T\into \G\onto X\times G)$ be a twist over $X\times G$
 with fibres $\sigma_x=(\T\into G_{\sigma_x}\onto G)$. Then 
 \begin{enumerate}\item If $\rtimes_\mu$ is an {\em exact} duality
 crossed-product functor
  (which always holds for $\rtimes_\max$) then $Q_x:C^*_\mu(\A_\Sigma)\onto C^*_\mu(\A_{\sigma_x})$ factors through an isomorphism 
  $C_\mu^*(\A_\Sigma)_x \cong C^*_\mu(\A_{\sigma_x})$.
\item If $G$ is an exact group, then $C_r^*(\A_\Sigma)$ is a continuous bundle of \cstar{}algebras over $X$ with fibres $C_r^*(\A_{\sigma_x})$.
\end{enumerate}
\end{theorem}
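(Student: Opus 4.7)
The plan is to read the statement as an immediate consequence of Theorem~\ref{the:iso-continuous-deformation-Fell} together with the continuity results of \cite{BE:deformation}*{Theorem~6.16}. First I would use Theorem~\ref{the:iso-continuous-deformation-Fell} to identify the field $(\B_\Sigma,\beta_\Sigma,\Phi_\Sigma)$ of weak $G\rtimes G$-algebras obtained from $(B,\beta,\phi)=(\kk{\A},\dual{\delta_\A},j_{C_0(G)})$ and the family $\Sigma$ with the triple
\[
(C^*(\A_\Sigma)\rtimes_{\delta_{\A_\Sigma}}\dualG,\dual{\delta_{\A_\Sigma}},\Psi_{X\times G}).
\]
Since the same isomorphism applied to the single twist $\sigma_x$ (that is, to the restriction of $\Sigma$ over $\{x\}$) yields the fibre $(\kk{\A_{\sigma_x}},\dual{\delta_{\A_{\sigma_x}}},j_{C_0(G)})$, this identification is fibre-wise compatible and hence realises $(\B_\Sigma,\beta_\Sigma,\Phi_\Sigma)$ as a $C_0(X)$-bundle whose fibre over $x$ is the dual weak $G\rtimes G$-algebra of $\A_{\sigma_x}$.

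Next I would pass through $\mu$-Landstad duality on both sides. By Proposition~\ref{prop-mu-cross-sectional-algebra} applied to $\A_\Sigma$, this transforms the global triple into the cosystem $(C^*_\mu(\A_\Sigma),\delta^\Sigma_\mu)$, and fibre-wise it produces $(C^*_\mu(\A_{\sigma_x}),\delta^{\sigma_x}_\mu)$. Now \cite{BE:deformation}*{Theorem~6.16} is exactly the statement that, for an \emph{exact} duality crossed-product functor $\rtimes_\mu$, applying Landstad duality to the deformation field $(\B_\Sigma,\beta_\Sigma,\Phi_\Sigma)$ yields an upper semi-continuous $C_0(X)$-algebra whose fibres are the Landstad duals of the fibres $(B_{\sigma_x},\beta_{\sigma_x},\phi_{\sigma_x})$; invoking \cite{Dana:book}*{Theorem~C.26} this upper semi-continuity is equivalent to the assertion that each $Q_x$ factors through an isomorphism $C^*_\mu(\A_\Sigma)_x\cong C^*_\mu(\A_{\sigma_x})$, proving part~(1). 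For part~(2), \cite{BE:deformation}*{Theorem~6.16} further upgrades upper semi-continuity to genuine continuity whenever $\rtimes_\mu$ is a \emph{continuous} crossed-product functor, and when $G$ is exact this applies to $\rtimes_r$, giving the continuous $C^*$-bundle structure on $C^*_r(\A_\Sigma)$.

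The only non-cosmetic point to check is the compatibility of the two $C_0(X)$-structures on $C^*_\mu(\A_\Sigma)$: the intrinsic one coming from pointwise multiplication by $C_0(X)$ on $C_c(\A_\Sigma)$ (via Notation~\ref{not-cross-sectional}), and the one inherited from the $C_0(X)$-bundle $\B_\Sigma$ under Landstad duality. Both are induced by the central structural map $\Psi_X\colon C_0(X)\to Z\M(\cdot)$ appearing in~\eqref{eq-structue-map}, and since the generalized-fixed-point construction used in Landstad duality is functorial for $C_0(X)$-equivariant morphisms (by \cite{BE:deformation}*{Proposition~2.9}), the two structures automatically coincide. I expect this verification to be the only step requiring care; once it is in hand, parts~(1) and~(2) follow directly from the cited theorem.
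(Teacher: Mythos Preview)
Your proposal is correct and matches the paper's own argument: the paper states the theorem as ``a direct application of the above theorem together with \cite{BE:deformation}*{Theorem 6.16}'', which is precisely the route you take. Your additional remark about the compatibility of the two $C_0(X)$-structures and the invocation of \cite{Dana:book}*{Theorem~C.26} make explicit what the paper leaves implicit in the discussion preceding the theorem.
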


In the special case where $G$ is a {\em discrete amenable} group, the second item of the above theorem can be derived from the results in the paper \cite{Raeburn:Deformations}*{Section~6} by Iain Raeburn. Using the fact that every discrete group admits a representation group in the sense of Moore 
(see also Notation \ref{not-smooth} above), 
he used this result to show that for every discrete amenable group $G$ and Fell bundle $\A$ over $G$, there exists a kind of universal continuous bundle of \cstar{}algebras over $X=H^2(G,\T)$ (which, in this case, carries a canonical compact Hausdorff topology)
with fibres $C^*(\A_\om)$, the (unique) cross-sectional algebra of the deformed Fell bundle $\A_\om$ for $\om\in Z^2(G,\T)$ 
as considered in Proposition \ref{prop-om-Fellbundle} above. This result can now be generalized as follows

\begin{theorem}
    Suppose that $G$ is smooth in the sense of Notation \ref{not-smooth} and that $Z\into H\onto G$ is a representation group for $G$. Let $\Sigma_H$ be the twist for $\widehat{Z} \times G$ constructed in Example \ref{ex-extension}. 

    Identifying $\Twist(G)$ with $\widehat{Z}$ via the transgression map $\tg:\chi\mapsto [\sigma_\chi]$, we obtain that $C^*(\A_{\Sigma_H})$ forms an upper semi-continuous bundle of \cstar{}algebras over $\Twist(G)$, with fibres isomorphic to $C^*(\A_{\sigma})$ for $[\sigma] \in \Twist(G)$. 

    Furthermore, if $G$ is exact, then $C_r^*(\A_{\Sigma_H})$ is a continuous bundle of \cstar{}algebras over $\Twist(G)$, with fibres isomorphic to $C_r^*(\A_{\sigma})$ for $[\sigma] \in \Twist(G)$. 
\end{theorem}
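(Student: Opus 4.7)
The plan is to view this result as a direct specialization of Theorem \ref{thm-contdeform} applied to the continuous family of twists $\Sigma_H$ of Example \ref{ex-extension}, together with the identification provided by the transgression isomorphism.

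First, I would set $X := \widehat{Z}$ and apply Theorem \ref{thm-contdeform} to the twist
\[
\Sigma_H = (\widehat{Z}\times\T \into \mathcal G_H \onto \widehat{Z}\times G)
\]
constructed in Example \ref{ex-extension}. By construction, the fibre of $\Sigma_H$ at $\chi\in\widehat{Z}$ is precisely $\sigma_\chi=(\T\into G_\chi\onto G)$, so the Fell bundle $\A_{\Sigma_H}$ over $\widehat{Z}\times G$ has fibrewise restriction $\A_{\sigma_\chi}$ at each $\chi$. Since the maximal crossed product $\rtimes_\max$ is always exact (as a consequence of exactness of full Fell bundle cross-sectional algebras, used already in the proof of Theorem \ref{the:iso-continuous-deformation-Fell}), part (1) of Theorem \ref{thm-contdeform} immediately yields that $C^*(\A_{\Sigma_H})$ is an upper semi-continuous $C_0(\widehat{Z})$-bundle with fibres $C^*(\A_{\sigma_\chi})$. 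Similarly, if $G$ is exact, then the reduced crossed-product functor is exact, and part (2) of Theorem \ref{thm-contdeform} gives that $C_r^*(\A_{\Sigma_H})$ is a \emph{continuous} bundle with fibres $C_r^*(\A_{\sigma_\chi})$.

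It remains to transfer these identifications from $\widehat{Z}$ to $\Twist(G)$ using the transgression isomorphism $\tg:\widehat{Z}\congto \Twist(G),\; \chi\mapsto [\sigma_\chi]$, which is available precisely because $Z\into H\onto G$ is assumed to be a representation group for $G$. Concretely, if $[\sigma]=\tg(\chi)$, then $\sigma$ and $\sigma_\chi$ represent the same class in $\Twist(G)$, so there exists an isomorphism of central extensions $\sigma\cong \sigma_\chi$. Functoriality of the deformation construction then furnishes an isomorphism of Fell bundles $\A_\sigma\cong \A_{\sigma_\chi}$, and hence isomorphisms $C^*(\A_\sigma)\cong C^*(\A_{\sigma_\chi})$ and $C_r^*(\A_\sigma)\cong C_r^*(\A_{\sigma_\chi})$. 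Pulling back the bundle structure along $\tg^{-1}$ yields the statement.

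The main obstacle I anticipate is the verification that $\tg$ is not only a bijection of sets but actually a \emph{homeomorphism} in whatever topology is relevant, so that pulling back along it preserves (upper semi-)continuity of the bundle. For discrete $G$ (as in Raeburn's original setting), $\Twist(G)\cong H^2(G,\T)$ is a compact Hausdorff group with its Pontryagin-dual topology, and the identification with $\widehat{Z}$ is automatic. For general smooth $G$, one has to rely on the discussion in \cite{BE:deformation}*{Section 4} where the transgression map is shown to be a group isomorphism; topologizing $\Twist(G)$ via $\tg$ (transporting the Pontryagin topology from $\widehat{Z}$) is the natural convention and is the sense in which the bundle is to be understood. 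Once this point is in place, the rest of the argument is essentially bookkeeping: continuity (respectively upper semi-continuity) is preserved under homeomorphism of the base space, and the fibre identification $\A_{\sigma_\chi}\cong \A_\sigma$ is an immediate consequence of the construction of $\A_\sigma$ depending only on the isomorphism class of $\sigma$.
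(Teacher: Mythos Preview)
Your proposal is correct and follows exactly the route the paper intends: the theorem is stated without proof precisely because it is an immediate specialization of Theorem~\ref{thm-contdeform} to $X=\widehat{Z}$ and $\Sigma=\Sigma_H$. Your concern about the topology on $\Twist(G)$ is not actually an obstacle here: the phrase ``identifying $\Twist(G)$ with $\widehat{Z}$ via the transgression map'' in the statement means the bundle is genuinely over $\widehat{Z}$ and one merely relabels the base, so no separate continuity argument for $\tg$ is needed.
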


\section{K-theory}

In this final section, we derive some consequences of our results in \cite{BE:deformation} concerning the $K$-theory of deformed \cstar{}algebras, applying them in the context of Fell bundles. Since we will use results from $KK$-theory, we need to restrict our considerations in this section to \emph{correspondence} crossed-product functors $\rtimes_\mu$ and assume from now on that $G$ is a {\em second-countable} locally compact group and that $\A$ is a separable Fell bundle over $G$.

We say that two twists $\sigma_0,\sigma_1$ of $G$ are \emph{homotopic}, if there exists a twist $\Sigma=([0,1]\times \T\into \G\onto [0,1]\times G)$ such that the fibres of $\Sigma$ at $0$ and $1$ are $\sigma_0$ resp. $\sigma_1$.

\begin{theorem}\label{thm-K}
Let  $G$ and $\A$ be as above and assume that $G$ satisfies the Baum-Connes conjecture with coefficients (e.g., amenable groups or, more generally,  groups with the Haagerup property). Let $\sigma_0$ and $\sigma_1$ be two homotopic twists for $G$. Then
$$K_*(C^*_r(\A_{\sigma_0}))\cong K_*(C^*_r(\A_{\sigma_1})).$$
If, in addition, $G$ is $K$-amenable, then 
$$K_*(C^*_\mu(\A_{\sigma_0}))\cong K_*(C^*_r(\A_{\sigma_0}))\cong K_*(C^*_r(\A_{\sigma_1}))\cong K_*(C^*_\mu(\A_{\sigma_1}))$$ for any correspondence crossed-product functor $\rtimes_\mu$.
\end{theorem}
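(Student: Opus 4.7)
The plan is to reduce the theorem to the $K$-theory invariance of coaction deformations established in \cite{BE:deformation}, combined with the identification of deformed cross-sectional Fell bundle algebras with coaction deformations supplied by Theorem \ref{thm-fell-bundle-om} (for a single twist) and its continuous-family version Theorem \ref{the:iso-continuous-deformation-Fell}.

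First, I would fix a homotopy realizing $\sigma_0\sim\sigma_1$, that is, a twist $\Sigma=([0,1]\times\T\into\G\onto[0,1]\times G)$ whose fibres at $t=0$ and $t=1$ are $\sigma_0$ and $\sigma_1$. Applying Theorem \ref{the:iso-continuous-deformation-Fell} to the dual weak $G\rtimes G$-algebra of $(C^*(\A),\delta_\A)$, I identify $C^*(\A_\Sigma)\rtimes_{\delta_{\A_\Sigma}}\dualG$ (with its dual action and structural map) with the deformation bundle $(\B_\Sigma,\beta_\Sigma,\Phi_\Sigma)$. Combined with Theorem \ref{thm-fell-bundle-om} and Proposition \ref{prop-iso-fixed}, this shows that $C^*_r(\A_\Sigma)$ realizes the continuous $C([0,1])$-bundle of deformations $(C^*_r(\A))^{\sigma_t}_r\cong C^*_r(\A_{\sigma_t})$; continuity is guaranteed by Theorem \ref{thm-contdeform}(2), which applies because amenable groups and groups with the Haagerup property are exact.

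Next, I invoke the $K$-theory homotopy-invariance for continuous families of reduced deformations proved in \cite{BE:deformation}: under the Baum-Connes conjecture with coefficients, the evaluation maps at the endpoints of such a bundle induce isomorphisms on $K$-theory of the fibres. This immediately yields $K_*(C^*_r(\A_{\sigma_0}))\cong K_*(C^*_r(\A_{\sigma_1}))$, which is the first assertion.

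Finally, for the $K$-amenable case, I recall that $K$-amenability of $G$ implies that for any correspondence crossed-product functor $\rtimes_\mu$ and any $G$-algebra $D$, the canonical quotient $D\rtimes_\mu G\to D\rtimes_r G$ is a $K$-theory isomorphism. Via Landstad duality -- which applies since correspondence functors are duality functors by \cite{BEW}*{Theorem~5.6} -- this transfers, together with Proposition \ref{prop-mu-cross-sectional-algebra}, to the statement that the quotient $C^*_\mu(\A_\sigma)\to C^*_r(\A_\sigma)$ is a $K$-theory isomorphism for each twist $\sigma$. Combined with the first assertion, this produces the full chain of isomorphisms. The main obstacle will be matching the continuous bundle $C^*_r(\A_\Sigma)$ coming from Theorem \ref{the:iso-continuous-deformation-Fell} with the precise object treated by the $K$-theory stability theorem of \cite{BE:deformation}, so that the endpoint evaluations coincide on the nose with the evaluation maps $C^*_r(\A_\Sigma)\to C^*_r(\A_{\sigma_i})$ for $i=0,1$; once this identification is made, both assertions follow formally.
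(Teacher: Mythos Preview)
Your approach is correct and follows the same underlying strategy as the paper: reduce to the $K$-theory results of \cite{BE:deformation} via the identification of Fell-bundle deformations with coaction deformations. However, the paper's argument is considerably more direct. It simply invokes the single-twist identification $C^*_\mu(\A_\sigma)\cong A^\sigma_\mu$ from Theorem~\ref{thm-fell-bundle-om} and then cites \cite{BE:deformation}*{Corollary~7.6}, which already handles homotopies of twists at the level of coaction deformations. There is no need to pass through Theorem~\ref{the:iso-continuous-deformation-Fell}, to assemble the continuous bundle $C^*_r(\A_\Sigma)$, or to worry about matching evaluation maps ``on the nose'' --- the obstacle you anticipated simply does not arise.

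Your detour through Theorem~\ref{thm-contdeform}(2) also introduces an unnecessary hypothesis: you invoke exactness of $G$, which you justify only for the parenthetical examples (amenable, Haagerup), whereas the theorem is stated for arbitrary groups satisfying Baum--Connes with coefficients. Whether every such group is exact is not something you should need here, and indeed the paper's route avoids it entirely since \cite{BE:deformation}*{Corollary~7.6} requires only the Baum--Connes hypothesis.
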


The theorem is a direct consequence of \cite{BE:deformation}*{Corollary 7.6} together with our identification $C_\mu^*(A_\sigma)\cong A^\sigma_\mu$ for any twist $\sigma$ for $G$ and the cosystem $(A,\delta)=(C^*(\A), \delta_{\A})$. Indeed, if $G$ is $K$-amenable, we even have 
that $C^*_\mu(\A_{\sigma})$ is $KK$-equivalent to $C_r^*(\A_\sigma)$ for every correspondence crossed-product functor $\rtimes_\mu$ and for every twist $\sigma$ for $G$. Moreover, if $G$ satisfies the {\em strong Baum-Connes conjecture} (see \cite{BE:deformation}*{Section  7} for the notation), then 
all $K$-theory isomorphisms in the above theorem come from $KK$-equivalences.

In order to compare Theorem \ref{thm-K} with previous results on cocycle deformation: if $\om_0,\om_1\in Z^2(G,\T)$ are homotopic $2$-cocycles in the sense that 
there is a $2$-cocycle $\Omega\in Z^2(G,\cont([0,1],\T))$ whose point evaluations at $0,1\in [0,1]$ give $\Omega_0=\omega_0$ and $\Omega_1=\omega_1$, then it follows from Example \ref{eq-cocycles} that the corresponding twists $\sigma_{\om_1}$ and $\sigma_{\om_2}$ are homotopic as well. We therefore obtain analogous results for the Fell bundles  $\A_{\om_i}:=\A_{\sigma_{\om_i}}$, $i=1,2$.

\begin{bibdiv}
  \begin{biblist}

\bib{Abadie:Tensor}{article}{
  author={Abadie, Fernando},
  title={Tensor products of Fell bundles over groups},
  status={eprint},
  note={\arxiv {funct-an/9712006}},
  date={1997},
}

\bib{Abadie:Enveloping}{article}{
  author={Abadie, Fernando},
  title={Enveloping actions and Takai duality for partial actions},
  journal={J. Funct. Anal.},
  volume={197},
  date={2003},
  number={1},
  pages={14--67},
  issn={0022-1236},
  review={\MRref {1957674}{2004c:46130}},
  doi={10.1016/S0022-1236(02)00032-0},
}

\bib{Abadie-Exel:Deformation}{article}{
  author={Abadie, Beatriz},
  author={Exel, Ruy},
  title={Deformation quantization via Fell bundles},
  journal={Math. Scand.},
  volume={89},
  date={2001},
  number={1},
  pages={135--160},
  issn={0025-5521},
  review={\MRref {1856986}{2002g:46118}},
  eprint={http://www.mscand.dk/article/view/14335},
}

\bib{BNS}{article}{
  author={Bhowmick, Jyotishman},
  author={Neshveyev, Sergey},
  author={Sangha, Amandip},
  title={Deformation of operator algebras by Borel cocycles},
  journal={J. Functional Analysis},
  volume={265},
  year={2013},
  pages={983--1001},
}

\bib{Buss-Echterhoff:Exotic_GFPA}{article}{
  author={Buss, Alcides},
  author={Echterhoff, Siegfried},
  title={Universal and exotic generalized fixed-point algebras for weakly proper actions and duality},
  status={eprint},
  note={\arxiv {1304.5697}},
  date={2013},
}

\bib{Buss-Echterhoff:Maximality}{article}{
  author={Buss, Alcides},
  author={Echterhoff, Siegfried},
  title={Maximality of dual coactions on sectional \cstar{}algebras of Fell bundles and applications},
  journal={Studia Math.},
  volume={229},
  date={2015},
  number={3},
  pages={233--262},
  issn={0039-3223},
  review={\MRref {3454302}{}},
  doi={10.4064/sm8361-1-2016},
}

\bib{BE:deformation}{article}{
  author={Buss, Alcides},
  author={Echterhoff, Siegfried},
  title={A new approach to  deformation of C*-algebras via coactions},
  journal = {Studia Math.}, 
 status={to appear},
  doi={10.4064/sm240208-25-9}
}

\bib{BEW}{article}{
  author={Buss, Alcides},
  author={Echterhoff, Siegfried},
  author={Willett, Rufus},
  title={Exotic crossed products and the {B}aum-{C}onnes conjecture},
  journal={J. Reine Angew. Math.},
  volume={740},
  year={2018},
  pages={111--159},
  issn={0075-4102},
  doi={10.1515/crelle-2015-0061},
  url={https://doi.org/10.1515/crelle-2015-0061},
}

\bib{Buss-Meyer:Crossed}{article}{
  author={Buss, Alcides},
  author={Meyer, Ralf},
  title={Crossed products for actions of crossed modules on ${\rm C}^*$-algebras},
  journal={J. Noncommut. Geom.},
  volume={11},
  date={2017},
  number={3},
  pages={1195--1235},
  issn={1661-6952},
  review={\MR {3713016}},
  doi={10.4171/JNCG/11-3-12},
}

\bib{Buss-Meyer:Groupoid_fibrations}{article}{
  author={Buss, Alcides},
  author={Meyer, Ralf},
  title={Iterated crossed products for groupoid fibrations},
  status={eprint},
  note={\arxiv {1604.02015}},
  date={2016},
}

\bib{CKRW}{article}{
  author={Crocker, David},
  author={Kumjian, Alexander},
  author={Raeburn, Iain},
  author={Williams, Dana P.},
  title={An equivariant {B}rauer group and actions of groups on {$C^*$}-algebras},
  journal={J. Funct. Anal.},
  volume={146},
  year={1997},
  number={1},
  pages={151--184},
  issn={0022-1236},
  doi={10.1006/jfan.1996.3010},
}

\bib{Doran-Fell:Representations}{book}{
  author={Doran, Robert S.},
  author={Fell, James M. G.},
  title={Representations of $^*$\nobreakdash -algebras, locally compact groups, and Banach $^*$\nobreakdash -algebraic bundles. Vol. 1},
  series={Pure and Applied Mathematics},
  volume={125},
  publisher={Academic Press Inc.},
  place={Boston, MA},
  date={1988},
  pages={xviii+746},
  isbn={0-12-252721-6},
  review={\MRref {936628}{90c:46001}},
}

\bib{Doran-Fell:Representations_2}{book}{
  author={Doran, Robert S.},
  author={Fell, James M. G.},
  title={Representations of $^*$\nobreakdash -algebras, locally compact groups, and Banach $^*$\nobreakdash -algebraic bundles. Vol. 2},
  series={Pure and Applied Mathematics},
  volume={126},
  publisher={Academic Press Inc.},
  place={Boston, MA},
  date={1988},
  pages={i--viii and 747--1486},
  isbn={0-12-252722-4},
  review={\MRref {936629}{90c:46002}},
}

\bib{EKQR}{article}{
  author={Echterhoff, Siegfried},
  author={Kaliszewski, Steven},
  author={Quigg, John},
  author={Raeburn, Iain},
  title={A categorical approach to imprimitivity theorems for {$C^*$}-dynamical systems},
  journal={Mem. Amer. Math. Soc.},
  volume={180},
  year={2006},
  number={850},
  pages={viii+169},
  issn={0065-9266},
  doi={10.1090/memo/0850},
  url={https://doi.org/10.1090/memo/0850},
}

\bib{Exel:Book}{book}{
  author={Exel, Ruy},
  title={Partial dynamical systems, Fell bundles and applications},
  series={Mathematical Surveys and Monographs},
  volume={224},
  publisher={American Mathematical Society, Providence, RI},
  date={2017},
  pages={vi+321},
  isbn={978-1-4704-3785-5},
  review={\MR {3699795}},
  doi={10.1090/surv/224},
}

\bib{ExelNg:ApproximationProperty}{article}{
  author={Exel, Ruy},
  author={Ng, {Ch}i-Keung},
  title={Approximation property of $C^*$\nobreakdash -algebraic bundles},
  journal={Math. Proc. Cambridge Philos. Soc.},
  volume={132},
  date={2002},
  number={3},
  pages={509--522},
  issn={0305-0041},
  doi={10.1017/S0305004101005837},
  review={\MR {1891686}},
}

\bib{FG}{article}{
  author={Feldman, J.},
  author={Greenleaf, F. P.},
  title={Existence of {B}orel transversals in groups},
  journal={Pacific J. Math.},
  volume={25},
  year={1968},
  pages={455--461},
  url={http://projecteuclid.org/euclid.pjm/1102986142},
}

\bib{Gleason}{article}{
  author={Gleason, A. M.},
  title={Spaces with a compact {L}ie group of transformations},
  journal={Proc. Amer. Math. Soc.},
  volume={1},
  year={1950},
  pages={35--43},
  issn={0002-9939},
  doi={10.2307/2032430},
}

\bib{HORR}{article}{
  author={Hurder, Steven},
  author={Olesen, Dorte},
  author={Raeburn, Iain},
  author={Rosenberg, Jonathan},
  title={The {C}onnes spectrum for actions of abelian groups on continuous-trace algebras},
  journal={Ergodic Theory Dynam. Systems},
  volume={6},
  year={1986},
  number={4},
  pages={541--560},
  issn={0143-3857,1469-4417},
  doi={10.1017/S0143385700003680},
  url={https://doi.org/10.1017/S0143385700003680},
}

\bib{KLQ:tensor}{article}{
  author={Kaliszewski, S.},
  author={Landstad, Magnus B.},
  author={Quigg, John},
  title={Tensor-product coaction functors},
  journal={J. Aust. Math. Soc.},
  volume={112},
  date={2022},
  number={1},
  pages={52--67},
  issn={1446-7887},
  review={\MR {4362398}},
  doi={10.1017/S1446788720000063},
}

\bib{Kaliszewski-Muhly-Quigg-Williams:Fell_bundles_and_imprimitivity_theoremsI}{article}{
  author={Kaliszewski, Steven P.},
  author={Muhly, Paul S.},
  author={Quigg, John},
  author={Williams, Dana P.},
  title={Fell bundles and imprimitivity theorems},
  journal={M\"unster J. Math.},
  volume={6},
  date={2013},
  pages={53--83},
  issn={1867-5778},
  eprint={http://nbn-resolving.de/urn:nbn:de:hbz:6-25319580780},
  review={\MR {3148208}},
}

\bib{Kasprzak:Rieffel}{article}{
  author={Kasprzak, Pavel},
  title={Rieffel deformation via crossed products},
  journal={ J. Funct. Anal.},
  volume={257},
  number={5},
  pages={1288--1332},
  year={2009},
}

\bib{Kasprzak1}{article}{
  author={Kasprzak, Pawel},
  title={Rieffel deformation of group coactions},
  journal={Comm. Math. Phys.},
  volume={300},
  year={2010},
  number={3},
  pages={741--763},
  issn={0010-3616},
  doi={10.1007/s00220-010-1093-9},
  url={https://doi.org/10.1007/s00220-010-1093-9},
}

\bib{MooreII}{article}{
  author={Moore, Calvin C.},
  title={Extensions and low dimensional cohomology theory of locally compact groups. {II}},
  journal={Trans. Amer. Math. Soc.},
  volume={113},
  year={1964},
  pages={64--86},
  issn={0002-9947},
  doi={10.2307/1994090},
}

\bib{LaLonde}{article}{
  author={LaLonde, Scott M.},
  title={Some consequences of stabilization theorem for Fell bundles over exact groupoids},
  journal={J. Operator Theory},
  volume={81},
  date={2019},
  number={2},
  pages={335--369},
  issn={0379-4024},
  review={\MR {3959062}},
  doi={10.7900/jot},
}

\bib{Ng}{article}{
   author={Ng, Chi-Keung},
   title={Discrete coactions on $C^\ast$-algebras},
   journal={J. Austral. Math. Soc. Ser. A},
   volume={60},
   date={1996},
   number={1},
   pages={118--127},
   issn={0263-6115},
   review={\MR{1364557}},
}

\bib{Nilsen:Duality}{article}{
  author={Nilsen, May},
  title={Duality for full crossed products of $C^\ast $-algebras by non-amenable groups},
  journal={Proc. Amer. Math. Soc.},
  volume={126},
  date={1998},
  number={10},
  pages={2969--2978},
  issn={0002-9939},
  review={\MR {1469427}},
  doi={10.1090/S0002-9939-98-04598-5},
}

\bib{Nilsen:Full}{article}{
  author={Nilsen, May},
  title={Full crossed products by coactions, $C_0(X)$-algebras and \cstar{}bundles},
  journal={Bull. London Math. Soc.},
  volume={31},
  date={1999},
  number={5},
  pages={556--568},
  issn={0024-6093},
  review={\MR {1703865}},
  doi={10.1112/S0024609399005883},
}

\bib{NT}{article}{
  author={Neshveyev, Sergey},
  author={Tuset, Lars},
  title={Deformation of {$\rm C^\ast $}-algebras by cocycles on locally compact quantum groups},
  journal={Adv. Math.},
  volume={254},
  year={2014},
  pages={454--496},
  issn={0001-8708},
  doi={10.1016/j.aim.2013.12.025},
  url={https://doi.org/10.1016/j.aim.2013.12.025},
}

\bib{Quigg:Landstad}{article}{
  author={Quigg, John C.},
  title={Landstad duality for \cstar{}coactions},
  journal={Math. Scand.},
  volume={71},
  date={1992},
  number={2},
  pages={277--294},
  issn={0025-5521},
  review={\MR {1212711}},
  doi={10.7146/math.scand.a-12429},
}

\bib{Quigg:Discrete}{article}{
   author={Quigg, John C.},
   title={Discrete $C^*$-coactions and $C^*$-algebraic bundles},
   journal={J. Austral. Math. Soc. Ser. A},
   volume={60},
   date={1996},
   number={2},
   pages={204--221},
   issn={0263-6115},
   review={\MR{1375586}},
}

\bib{Raeburn:Deformations}{article}{
  author={Raeburn, Iain},
  title={Deformations of Fell bundles and twisted graph algebras},
  journal={Math. Proc. Cambridge Philos. Soc.},
  volume={161},
  date={2016},
  number={3},
  pages={535--558},
  issn={0305-0041},
  review={\MR {3569160}},
  doi={10.1017/S0305004116000359},
}

\bib{Rieffel:Deformation}{article}{
  author={Rieffel, Marc A.},
  title={Deformation quantization for actions of {${\bf R}^d$}},
  journal={Mem. Amer. Math. Soc.},
  volume={106},
  year={1993},
  number={506},
  pages={x+93},
  issn={0065-9266},
  doi={10.1090/memo/0506},
  url={http://dx.doi.org/10.1090/memo/0506},
}

\bib{Rief-K}{article}{
  author={Rieffel, Marc A.},
  title={{$K$}-groups of {$C^*$}-algebras deformed by actions of {${\bf R}^d$}},
  journal={J. Funct. Anal.},
  volume={116},
  year={1993},
  number={1},
  pages={199--214},
  issn={0022-1236},
  doi={10.1006/jfan.1993.1110},
  url={http://dx.doi.org/10.1006/jfan.1993.1110},
}

\bib{Dana:book}{book}{
  author={Williams, Dana P.},
  title={Crossed products of {$C{^\ast }$}-algebras},
  series={Mathematical Surveys and Monographs},
  volume={134},
  publisher={American Mathematical Society, Providence, RI},
  year={2007},
  pages={xvi+528},
  isbn={978-0-8218-4242-3; 0-8218-4242-0},
  doi={10.1090/surv/134},
  url={https://doi.org/10.1090/surv/134},
}

\bib{Yamashita}{article}{
   author={Yamashita, Makoto},
   title={Deformation of algebras associated with group cocycles},
   journal={J. Noncommut. Geom.},
   volume={17},
   date={2023},
   number={4},
   pages={1145--1166},
   issn={1661-6952},
   review={\MR{4653783}},
   doi={10.4171/jncg/522},
}

  \end{biblist}
\end{bibdiv}

\vskip 0,5pc

\end{document}